\author{
\normalsize William Salkeld \\[8pt]
         \small  Laboratoire J.A.Dieudonn\'e \\ 
         \small  Universit\'e de Nice Sophia-Antipolis \\
         \small  Parc Valrose
         \small France-06108 NICE Cedex 2 \\
        \small  salkeld@unice.fr 
}
\date{ \currenttime, \ddmmyyyydate\today}
\numberwithin{equation}{section}
\theoremstyle{plain}
\newtheorem{theorem}{Theorem}[section]
\newtheorem{lemma}[theorem]{Lemma}
\newtheorem{proposition}[theorem]{Proposition}
\newtheorem{corollary}[theorem]{Corollary}
\newtheorem{definition}[theorem]{Definition}
\newtheorem{remark}[theorem]{Remark}
\newtheorem{example}[theorem]{Example}
\newtheorem{assumption}[theorem]{Assumption}
\newcommand{\bB}{\mathbb{B}}
\newcommand{\bE}{\mathbb{E}}
\newcommand{\bN}{\mathbb{N}}
\newcommand{\bP}{\mathbb{P}}
\newcommand{\bR}{\mathbb{R}}
\newcommand{\bW}{\mathbb{W}}
\newcommand{\bZ}{\mathbb{Z}}
\newcommand{\cA}{\mathcal{A}}
\newcommand{\cB}{\mathcal{B}}
\newcommand{\cE}{\mathcal{E}}
\newcommand{\cF}{\mathcal{F}}
\newcommand{\cH}{\mathcal{H}}
\newcommand{\cK}{\mathcal{K}}
\newcommand{\cL}{\mathcal{L}}
\newcommand{\cM}{\mathcal{M}}
\newcommand{\cP}{\mathcal{P}}
\newcommand{\cR}{\mathcal{R}}
\newcommand{\cS}{\mathcal{S}}
\newcommand{\fB}{\mathfrak{B}}
\newcommand{\fC}{\mathfrak{C}}
\newcommand{\fE}{\mathfrak{E}}
\newcommand{\fF}{\mathfrak{F}}
\newcommand{\fG}{\mathfrak{G}}
\newcommand{\fH}{\mathfrak{H}}
\newcommand{\fL}{\mathfrak{L}}
\newcommand{\fM}{\mathfrak{M}}
\newcommand{\fN}{\mathfrak{N}}
\newcommand{\fQ}{\mathfrak{Q}}
\newcommand{\fS}{\mathfrak{S}}
\newcommand{\fc}{\mathfrak{c}}
\newcommand{\fl}{\mathfrak{l}}
\newcommand{\fs}{\mathfrak{s}}
\newcommand{\rk}{\mathbf{K}}
\newcommand{\rh}{\mathbf{h}}
\newcommand{\rw}{\mathbf{W}}
\newcommand{\rx}{\mathbf{X}}
\newcommand{\ry}{\mathbf{Y}}
\newcommand{\rId}{\mathbf{1}}
\newcommand{\rS}{\mathbf{S}}
\DeclareMathOperator{\supp}{supp}
\DeclareMathOperator{\erf}{erf}
\newcommand{\1}{\mathbbm{1}}
\title{Small ball probabilities, metric entropy \\and Gaussian rough paths}
\begin{document}

\maketitle

\renewcommand*{\thefootnote}{\arabic{footnote}}

\begin{abstract} 
We study the Small Ball Probabilities (SBPs) of Gaussian rough paths. While many works on rough paths study the Large Deviations Principles (LDPs) for stochastic processes driven by Gaussian rough paths, it is a noticeable gap in the literature that SBPs have not been extended to the rough path framework. 

LDPs provide macroscopic information about a measure for establishing Integrability type properties. SBPs provide microscopic information and are used to establish a locally accurate approximation for a measure. Given the compactness of a Reproducing Kernel Hilbert space (RKHS) ball, its metric entropy provides invaluable information on how to approximate the law of a Gaussian rough path. 

As an application, we are able to find upper and lower bounds for the rate of convergence of an empirical rough Gaussian measure to its true law in pathspace. 
\end{abstract} 
{\bf Keywords:} Small Ball Probabilities, Metric Entropy, Gaussian approximation, Rough paths

\vspace{0.3cm}

\noindent


\setcounter{tocdepth}{1}
\tableofcontents

\section{Introduction}

Small Ball Probabilites (SBPs), sometimes referred to as small deviations principles, study the asymptotic behavour of the measure of a ball of radius $\varepsilon\to 0$. Given a measure $\cL$ on a metric space $(E, d)$ with Borel $\sigma$-algebra $\cB$, we refer to the SBP around a point $x_0$ as 
$$
\log\bigg(\cL\Big[ \big\{ x\in E: d(x, x_0)<\varepsilon\big\}\Big]\bigg) \qquad \varepsilon \to 0. 
$$
This is in contrast to a Large Deviations Principle (LDP) which considers the asymptotic behaviour for the quantity
$$
\log\bigg(\cL\Big[ \big\{ x\in E: d(x, x_0)>a \big\}\Big]\bigg) \qquad a\to \infty. 
$$
LDPs have proved to be a powerful tool for quantifying the tails of Gaussian probability distributions that have been sucessfully explored and documented in recent years, see for example \cites{bogachev1998gaussian, ledoux2013probability} and references therein. Similar results have been extended to a wide class of probability distributions, see for example \cites{varadhan1984large, DemboZeitouni2010}. However, the complexity of SBPs has meant there has been a generally slower growth in the literature. This is not to detract from their usefulness: there are many insightful and practical applications of SBPs to known problems, in particular the study of compact operators, computation of Hausdorff dimension and the rate of convergence of empirical and quantized distributions.

As a motivational example, let $\cL$ be a Gaussian measure on $\bR^d$ with mean $0$ and identity covariance matrix. Then
$$
\cL\Big[ \big\{ x\in \bR^d: |x|_2<\varepsilon\big\} \Big] = \frac{\Gamma(d/2) - \Gamma(d/2, \tfrac{\varepsilon^2}{2})}{\Gamma(d/2)} \sim \frac{2\cdot \varepsilon^d}{\Gamma(d+1) \cdot 2^{d/2} } \qquad \varepsilon \to 0. 
$$
Therefore an application of l'H\^opital's rule yields
$$
\fB_{0, 2}(\varepsilon)=-\log\bigg( \cL\Big[ \big\{ x\in \bR^d: |x|_2<\varepsilon\big\} \Big]\bigg) \sim d \cdot \log(\varepsilon^{-1}) \qquad \varepsilon \to 0. 
$$
Alternatively, using a different norm we have
$$
\cL\Big[ \big\{ x\in \bR^d: |x|_\infty<\varepsilon\big\} \Big] = \mbox{erf}\Big( \tfrac{\varepsilon}{\sqrt{2}}\Big)^d \sim \varepsilon^d \Big( \tfrac{2}{\pi}\Big)^{d/2} \qquad \varepsilon \to 0
$$
and we get
$$
\fB_{0, \infty}(\varepsilon)=-\log\bigg( \cL\Big[ \big\{ x\in \bR^d: |x|_\infty<\varepsilon\big\} \Big]\bigg) \sim d \cdot \log(\varepsilon^{-1}) \qquad \varepsilon \to 0. 
$$
We can think of the SBPs as capturing the Lebesgue measure of a compact set (in this case a unit ball with different norms) in the support of the measure. The question then arises, what happens as the dimensions of the domain of the Gaussian measure are taken to infinity (so that there is no Lebesgue measure to compare with) and we study Gaussian measures on Banach spaces? Similarly, how does enhancing these paths to rough paths affect their properties?

\subsubsection*{Small Ball Probabilities}

Small ball probabilities encode the shape of the cumulative distribution function for a norm around 0. For a self-contained introduction to the theory of SBPs and Gaussian inequalities, see \cite{li2001gaussian}. 

SBPs for a Brownian motion with respect to the H\"older norm were first studied in \cite{baldi1992some}. Using the Cielsielski representation of Brownian motion, the authors are able to exploit the orthogonality of the Schauder wavelets in the Reproducing Kernel Hlibert Space (RKHS) to represent the probability as a product of probabilities of 1 dimensional normal random variables. Standard analytic estimations of the Gauss Error function provide an upper and lower bound for the probability and an expression for the limit for the probability as $\varepsilon \to 0$. 

Later, the same results were extended to a large class of Gaussian processes under different assumptions for the covariance and different choices of Banach space norms, see for example \cites{kuelbs1993small,kuelbs1995small, stolz1996some} and others. 

In \cite{dobbs2014small}, the author studies some SBPs for Levy Area of Brownian motion by treating it as a time-changed Brownian motion. However, there are no works studying SBPs for rough paths. 

The metric entropy of a set is a way of measuring the ``Compactness'' of a compact set. For a neat introduction to the study of entropy and some of its applications, see \cite{carl1990entropy} and \cite{edmunds2008function}. The link between SBPs for Gaussian measures on Banach spaces and metric entropy is explored in \cite{Kuelbs1993} and later extended in \cite{li1999approximation} to encompass the truncation of Gaussian measures. SBP results for integrated Brownian motion, see \cite{gao2003integrated}, were used to compute the metric entropy of $k$-monotone functions in \cite{gao2008entropy}. The link between the entropy of the convex hull of a set and the associated Gaussian measure is explored in \cites{gao2004entropy, kley2013kuelbs}. For a recent survey on Gaussian measures and metric entropy, see \cite{kuhn2019gaussian}. 

There is a natural link between the metric entropy of the unit ball of the RKHS of a Gaussian measure and the quantization problem. Using the LDPs of the Gaussian measure, one can easily find a ball (in the RKHS) with measure $1-\varepsilon$ where $0<\varepsilon \ll 1$. Given the $\varepsilon$ entropy of this set, the centres of the minimal cover represent a very reasonable ``guess'' for an optimal quantization since the Gaussian measure conditioned on the closure of this set is ``close'' to uniform. For more details, see \cites{graf2003functional, dereich2003link}. Sharp estimates for Kolmogorov numbers, an equivalent measure to metric entropy, are demonstrated in \cite{luschgy2004sharp}. 

More recently, SBPs have been applied to Baysian inference and machine learning, see for example \cites{van2007bayesian, vaart2011information, aurzada2009small2}. 

\subsubsection*{Gaussian correlation inequalities}

A key step in the proof of many SBP results is the use of a correlation inequality to lower or upper bound a probability of the intersection of many sets by a product of the probabilities of each set. Thus a challenging probability computation can be simplified by optimising over the choice of correlation strategically. 

The Gaussian correlation inequality states that for any two symmetric convex sets $A$ and $B$ in a separable Banach space and for any centred Gaussian measure $\cL$ on $E$, 
$$
\cL[A\cap B] \geq \cL[A] \cL[B]. 
$$
The first work which considers a special case of this result was conjectured in \cite{dunnett1955approximations}, while the first formal statement was made in \cite{gupta1972inequalities}. 

While the inequality remained unproven until recently, prominent works proving special examples and weaker statements include \cites{khatri1967certain,sidak1968multivariate} (who independently proved the so called \v{S}id\'ak's Lemma), \cite{pitt1977gaussian} and \cite{li1999gaussian}. The conjecture was proved in 2014 by Thomas Royen in a relatively uncirculated ArXiv submission \cite{royen2014simple} and did not come to wider scientific attention for another three years in \cite{latala2017royen}. 

Put simply, the idea is to minimise a probability for a collection of normally distributed random variables by varying the correlation. Applications of these inequalities are wide ranging and vital to the theory of Baysian inference. 

\subsubsection*{Rough paths and enhanced Gaussian measures}

Since their inception in \cite{lyons1998differential}, rough paths have proved a powerful tool in understanding stochastic processes. In a nut shell, the theory states that given an irregular white noise propagating a differential equation, one is required to know the path and the iterated integrals of the noise for a rich class of solutions. This path taking values on the characters of a Hopf algebra and is referred to as the signature. 

An important step in the development of the theory of rough paths was the work of \cite{ledoux2002large} which studies the LDPs of an enhanced Brownian motion, the so-called lift of the path of a Brownian motion to its signature. The authors prove a Large Deviations Principle and a support theorem for the law of the enhanced Brownian motion as a measure over the collection of rough paths with respect to the rough path metric. Then, by the continuity of the It\^o-Lyons map the LDP can be extended to the solution of any rough differential equation driven by the enhanced Brownian motion. 

Originally, rough paths were used to give a pathwise meaning to the solutions of stochastic differential equations where previously only a probabilistic meaning was known. However, there are an increasing number of works that study measures on the collection of rough paths motivated by the study of systems of interacting particles. 

In general, the study of measures over rough paths has been focused on the macroscopic properties. This was natural given the signature contains more information than the path on its own and it is not immediately clear that this extra information does not render the objects non-integrable. Questions of integrability of rough paths were addressed in \cites{friz2010generalized,cass2013integrability}. These were used to study rough differential equations that depend on their own distribution, the so called McKean-Vlasov equations in \cite{CassLyonsEvolving}. More recently, there has been a rapid expansion of this theory, see \cites{coghi2018pathwise,  2019arXiv180205882.2B, cass2019support}. Of particular interest to this work is \cite{deuschel2017enhanced} which studies the convergence of the empirical measure obtained by sampling $n$ enhanced Brownian motions to the law of an enhanced Brownian motion. 

The author was unable to find material in the literature pertaining to the microscopic properties of distributions over the collection of rough paths. This work came out of a need to better understand interacting particle systems driven by Gaussian noises, although we emphasise that no results in this paper need be restricted to that framework. 

\subsubsection*{Our contributions}

The structure of this paper is as follows: Firstly, we introduce necessary material and notations in Section \ref{section:Prelim}. In order to extend the theory of Gaussian measures on Banach spaces to the framework of rough paths, we need to rephrase several well known Gaussian inequalities and prove new correlation inequalites. This is done in Section \ref{section:EnGaussInequal}. While technical, these results are stronger than we require and represent an extension of the theory of correlation inequalities to elements of the Wiener It\^o chaos expansion. 

The main contribution of this work is the computation of SBPs for Gaussian rough paths with the rough path H\"older metric. These results are solved in Section \ref{section:SmallBallProbab}. We remark that the discretisation of the H\"older norm in Lemma \ref{lemma:DiscretisationNorm} was unknown to the author and may be of independent interest for future works on rough paths. 

Finally, Sections \ref{section:MetricEntropy} and \ref{section:OptimalQuant} are applications of Theorem \ref{Thm:SmallBallProbab} following known methods that are adapted to the rough path setting. Of particular interest are Theorems \ref{thm:QuantizationRateCon} and \ref{thm:EmpiricalRateCon1} 
which provide an upper and lower bound for the rate of convergence for the empirical rough Gaussian measure. 

\section{Preliminaries}
\label{section:Prelim}

We denote by $\bN=\{1,2,\cdots\}$ the set of natural numbers and $\bN_0=\bN\cup \{0\}$, $\bZ$ and $\bR$ denote the set of integers and real numbers respectively. $\bR^+=[0,\infty)$. By $\lfloor x \rfloor$ we denote the largest integer less than or equal to $x\in \bR$. $\1_A$ denotes the usual indicator function over some set $A$. Let $e_j$ be the unit vector of $\bR^d$ in the $j^{th}$ component and $e_{i, j} = e_i \otimes e_j$ be the unit vector of $\bR^d \otimes \bR^d$. 

For sequences $(f_n)_{n\in \bN}$ and $(g_n)_{n\in\bN}$, we denote 
\begin{align*}
f_n \lesssim g_n \ \ \iff  \ \ \limsup_{n\to \infty} \frac{f_n}{g_n}\leq C, 
\qquad \textrm{and}\qquad 
f_n \gtrsim g_n \ \ \iff  \ \ \liminf_{n\to \infty} \frac{f_n}{g_n}\geq C. 
\end{align*}
where $C$ is a positive constant independent of the limiting variable. When $f_n \lesssim g_n$ and $f_n \gtrsim g_n$, we say $f_n \approx g_n$. This is distinct from
\begin{align*}
f_n \sim g_n \ \ \iff \ \ \lim_{n\to \infty} \frac{f_n}{g_n} = 1. 
\end{align*}

We say that a function $L:(0,\infty) \to (0, \infty)$ is \emph{slowly varying at infinity} if $\forall s>0$
$$
\lim_{t\to \infty} \frac{L(st)}{L(t)} = 1. 
$$
A function $x \mapsto \phi(1/x)$ is called \emph{regularly varying at infinity} with index $a>0$ if there exists a function $L$ which is slowly varying at infinity such that
$$
\phi(\varepsilon) = \varepsilon^{-a} L \big(\tfrac{1}{\varepsilon}\big). 
$$

\subsection{Gaussian Theory}

\begin{definition}
Let $E$ be a separable Banach space equipped with its cylinder $\sigma$-algebra $\cB$. A Gaussian measure $\cL$ is a Borel probability measure on $(E, \cB)$ such that the pushforward measure of each element of the dual space $E^*$ is a Gaussian random variable. Thus the measure $\cL$ is uniquely determined in terms of the covariance bilinear form $\cR:E^* \times E^* \to \bR$ by
$$
\cR[f, g]:= \int_E f(x) \cdot g(x) d\cL(x). 
$$
The covariance Kernel, $\cS:E^* \to E$ is defined in terms of the Pettis integral
$$
\cS[f]:= \int_E x \cdot f(x) d\cL(x). 
$$
Denote by $\cH$ the Hilbert space obtained by taking the closure of $E^*$ with respect to the inner product induced by the form $\cR$. The covariance kernel has spectral representation $\cS= i i^*$ where $i$ is the compact embedding of $\cH$ into $E$. 

We refer to $\cH$ as the Reproducing Kernel Hilbert Space (RKHS) of Gaussian measure $\cL$. 
When the Hilbert space $\cH$ is dense in the Banach space $E$, the triple $(E, \cH, i)$ is called an Abstract Wiener space. 
\end{definition}

We denote the unit ball in the RKHS norm as $\cK$. It is well known that the set $\cK$ is compact in the Banach space topology. 

\begin{proposition}[Borell's Inequality]
\label{pro:BorellInequal}
Let $\Phi(x):=\int_{-\infty}^x \frac{1}{\sqrt{2\pi}} \exp( -y^2/2) dy$. Let $\cL$ be a Gaussian measure on a separable Banach space $E$. Let $\cK:=\{h\in \cH: \|h\|_\cH\leq1\}$ and let $A$ be a Borel subset of $E$. Then
$$
\cL_*( A + t\cK ) \geq \Phi\Big( t + \Phi^{-1}( \cL(A))\Big)
$$
where $\cL_*$ is the inner measure of $\cL$ and is chosen to avoid measurability issues with the set $A + t\cK$. 
\end{proposition}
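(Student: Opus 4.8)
The plan is to derive Borell's inequality from the \emph{Gaussian isoperimetric inequality} in finite dimensions and then lift the estimate to the abstract Wiener space $(E,\cH,i)$ by cylindrical approximation. Throughout, $\gamma_n$ denotes the standard Gaussian measure on $\bR^n$ and $B_n$ the closed Euclidean unit ball, which is exactly the RKHS unit ball of $\gamma_n$.

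The first step is the model case on $\bR^n$. For a half-space $H=\{x\in\bR^n:\langle x,u\rangle\le a\}$ with $|u|_2=1$ one has $\gamma_n(H)=\Phi(a)$, and since $H+tB_n=\{x:\langle x,u\rangle\le a+t\}$ also $\gamma_n(H+tB_n)=\Phi(a+t)$. The Gaussian isoperimetric inequality --- that among Borel sets of a fixed measure the half-spaces minimise the $\gamma_n$-measure of the $t$-enlargement (see e.g.\ \cite{bogachev1998gaussian}; it can be obtained from the spherical isoperimetric inequality via Poincar\'e's limit, or from Ehrhard's inequality applied to $A$ and a ball of radius tending to infinity) --- then yields
$$\gamma_n(A+tB_n)\ \ge\ \Phi\big(\Phi^{-1}(\gamma_n(A))+t\big)$$
for every Borel $A\subseteq\bR^n$ and every $t>0$. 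This is the finite-dimensional instance of the claim and the analytic core of the argument.

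Next I would transfer this to $E$. Fix an orthonormal basis $(h_k)_{k\ge1}$ of $\cH$ and let $\pi_n:E\to E_n:=\mathrm{span}(h_1,\dots,h_n)$ be the associated projections, defined $\cL$-a.e.\ via the Paley--Wiener map; then $(\pi_n)_*\cL$ is the standard Gaussian on $E_n\cong\bR^n$, its RKHS ball is $\cK\cap E_n$, and $\pi_n x\to x$ in $E$ for $\cL$-a.e.\ $x$. For a cylinder set $A=\pi_m^{-1}(A_0)$ with $n\ge m$, writing $\cL$ as a product along $\pi_n$ and a complementary subspace one checks that $A+t(\cK\cap E_n)=\big(\widetilde A_0+t(\cK\cap E_n)\big)\times\ker\pi_n$ for a cylinder $\widetilde A_0\subseteq E_n$ with $(\pi_n)_*\cL(\widetilde A_0)=\cL(A)$, so the finite-dimensional inequality gives $\cL\big(A+t(\cK\cap E_n)\big)\ge\Phi\big(\Phi^{-1}(\cL(A))+t\big)$. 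Since $i:\cH\to E$ is continuous, $\bigcup_n(\cK\cap E_n)$ is dense in $\cK$, so for $A$ open the sets $A+t(\cK\cap E_n)$ increase to $A+t\cK$; letting $n\to\infty$ proves the claim for open cylinder sets. One then removes the ``cylinder'' and ``open'' restrictions in turn by inner/outer regularity of $\cL$ together with the monotonicity of $r\mapsto\Phi(\Phi^{-1}(r)+t)$, and the inner measure $\cL_*$ appears only because $A+t\cK$ may fail to be Borel whereas each $A+t(\cK\cap E_n)$ is.

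I expect the main difficulty to lie in this infinite-dimensional passage rather than in the finite-dimensional isoperimetry (which I would simply cite): one must verify that the truncations $\cK\cap E_n$ exhaust $\cK$ tightly enough in the Banach topology to interchange the monotone limit with $\cL$, that the product decomposition of $\cL$ along $E_n$ and its complement is legitimate, and that the regularity arguments carry the estimate from open cylinder sets to an arbitrary Borel $A$ while correctly accounting for the non-measurability of $A+t\cK$ via $\cL_*$.
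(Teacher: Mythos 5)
The paper itself offers no proof of Proposition \ref{pro:BorellInequal} beyond the citation of \cite{ledoux1996isoperimetry}, and your overall strategy --- finite-dimensional Gaussian isoperimetry with half-spaces as extremisers, lifted to the abstract Wiener space through the projections $\pi_n$ attached to an orthonormal basis of $\cH$, with $\cL_*$ absorbing the possible non-measurability of $A+t\cK$ --- is exactly the classical route taken there. The model case on $\bR^n$, the product computation for cylinder sets $A=\pi_m^{-1}(A_0)$ with $m\le n$, and the identity $A+t\cK=\bigcup_n\big(A+t(\cK\cap E_n)\big)$ for $A$ open are all correct as you state them.

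The genuine gap is the final step, where you propose to remove the ``cylinder'' restriction ``by inner/outer regularity''. This cannot work as written. Every nonempty cylinder set contains a translate of a nontrivial linear subspace (the common kernel of the finitely many functionals defining it) and is therefore unbounded in $E$, so no bounded open or closed set --- a ball of $E$, say --- contains a nonempty cylinder set, and in particular general sets cannot be exhausted from inside by cylinder sets; containment-type regularity within the cylinder class is simply unavailable. Approximation in measure does not substitute for it: if $C$ is a cylinder set with $\cL(A\triangle C)$ small, the inequality for $C$ tells you nothing about $A$, because the map $B\mapsto B+t\cK$ can inflate sets of arbitrarily small measure into sets of large measure, so the error $(C\setminus A)+t\cK$ is not controlled by $\cL(C\setminus A)$. (Nor can one apply the $n$-dimensional inequality fibrewise to the sections of a general $A$ over the complementary coordinates: $r\mapsto\Phi\big(\Phi^{-1}(r)+t\big)$ is concave, so Jensen's inequality runs in the wrong direction.) The standard repair --- and the actual content of the proof in \cite{ledoux1996isoperimetry}, see also \cite{bogachev1998gaussian} --- is to reduce by inner regularity to $A$ closed (this direction is legitimate, since $F\subseteq A$ gives $F+t\cK\subseteq A+t\cK$ and $\Phi(\Phi^{-1}(\cdot)+t)$ is increasing) and then to use the almost sure convergence $\pi_n x\to x$ that you already record: for $\eta>0$ choose $n$ so that $\|x-\pi_n x\|_E\le\eta$ off a set of measure $\varepsilon$, apply the $n$-dimensional inequality to the projection onto $E_n$ of the $\eta$-neighbourhood of $A$, deduce a lower bound for $\cL\big(A+t\cK+2\eta B_E\big)$ with $B_E$ the unit ball of $E$, and finally let $\eta\downarrow0$, using the compactness of $\cK$ in $E$ and the closedness of $A$ to identify $\bigcap_{\eta>0}\big(A+t\cK+2\eta B_E\big)$ with $A+t\cK$. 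Without an argument of this kind the reduction to cylinder sets, which is the crux you yourself flag, remains incomplete.
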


The proof can be found in \cite{ledoux1996isoperimetry}. 

\subsection{Rough paths}
\label{subsec:RoughPaths}

Throughout this paper, we will use the notation for increments of a path $X_{s, t} = X_t - X_s$ for $s\leq t$. Rough paths were first introduced in \cite{lyons1998differential}. 
We will be most closely following \cite{frizhairer2014}. 

\subsubsection{Algebraic material}

We denote $T^{(2)}(\bR^d)$ to be the tensor space $\bR \oplus \bR^d \oplus ( \bR^d \otimes \bR^d)$. This has a natural vector space structure along with a non-commutative product $\boxtimes$ with unit $(1, 0,0)$ defined by 
$$
(a, b, c) \boxtimes (a', b'. c') = (a\cdot a', a\cdot b' + a' \cdot b, a\cdot c' + a'\cdot c + b \otimes b').
$$ 
For $i=0,1,2$, we denote the canonical projection $\pi_i: T^{(2)}(\bR^d) \to (\bR^d)^{\otimes i}$. 

The subset $G^{(2)}(\bR^d) = \{ (a, b, c) \in T^{(2)}(\bR^d): a=1\}$ forms a non-commutative Lie group with group operation $\boxtimes$ and inverse $(1, b, c)^{-1} = (1, -b, -c + b\otimes b)$. This turns out to be the step-2 nilpotent Lie group with $d$ generators. 

The subset $\fL^{(2)} (\bR^d) = \{ (a, b, c) \in T^{(2)}(\bR^d): a=0\}$ forms a Lie algebra with the Lie brackets 
$$
[\fl_1, \fl_2]_{\boxtimes} = \fl_1 \boxtimes \fl_2 - \fl_2 \boxtimes \fl_1. 
$$
There exist bijective diffeomorphisms between $\fL^{(2)}(\bR^d)$ and $G^{(2)}(\bR^d)$ called the exponential map $\exp_\boxtimes :\fL^{(2)}(\bR^d) \to G^{(2)}(\bR^d)$ and logarithm map $\log_{\boxtimes}: G^{(2)}(\bR^d) \to \fL^{(2)}(\bR^d)$ defined by
$$
\exp_\boxtimes\Big( (0,b,c) \Big) =  (1, b, c +\tfrac{1}{2} b\otimes b), 
\quad 
\log_\boxtimes\Big( (1, b, c) \Big) = (0, b, c - \tfrac{1}{2} b\otimes b). 
$$

We define the dilation $(\delta_t)_{r>0}$ on the Lie algebra $\fL^{(2)}(\bR^d)$ to be the collection of automorphisms of $\fL^{(2)}(\bR^d)$ such that $\delta_s \delta_t = \delta_{st}$. The dilation can also be extended to the Lie group by considering $\delta_t[\log_\boxtimes]$. A homogenous group is any Lie group whose Lie algebra is endowed with a family of dilations. 

A homogeneous norm on a homogeneous group $G$ is a continuous function $\|\cdot\|:G \to \bR^+$ such that $\| g\|=0 \iff g=\rId$ and $\| \delta_t[g]\| = |t|\cdot \| g\|$. A homogeneous norm is called subadditive if $\| g_1\boxtimes g_2\|\leq \| g_1\| + \|g_2\|$ and called symmetric if $\|g^{-1}\| = \|g\|$. 

When a homogenous norm is subadditive and symmetric, it induces a left invariant metric on the group called the Carnot-Caratheodory metric which we denote $d_{cc}$. We will often write
$$
\| \rx \|_{cc} = d_{cc}( \rId, \rx)
$$
All homogeneous norms are equivalent, so we will often shift between homogeneous norms that are most suitable for a given situation. 

Examples of a homogeneous norm include
\begin{equation}
\label{eq:HomoNorm}
\| g \|_{G^{(2)}} = \sum_{A \in \cA_2} \Big| \big\langle \log_\boxtimes (g), e_A \big\rangle \Big|^{1/|A|}, 
\qquad 
\| g \|_{G^{(2)}} = \sup_{A \in \cA_2} \Big| \big\langle \log_\boxtimes (g), e_A \big\rangle \Big|^{1/|A|}. 
\end{equation}
where $\{ e_A: A\in \cA_2\}$ is a basis of the vector space $T^{(2)}(\bR^d)$. 

\subsubsection{Geometric and weak geometric rough paths}

\begin{definition}
Let $\alpha\in (\tfrac{1}{3}, \tfrac{1}{2}]$. A path $\rx:[0,T] \to G^2(\bR^d)$ is called an $\alpha$-rough paths if for all words $A \in \cA_2$, 
\begin{align}
\label{eq:def:rough-paths}
\rx_{s, u} = \rx_{s, t} \boxtimes \rx_{t, u}
\quad \mbox{and} \quad
\sup_{A \in \cA_2} \sup_{s, t\in [0,T]}& \frac{\langle \rx_{s, t}, e_A \rangle }{|t-s|^{\alpha |A|}}< \infty. 
\end{align}
In, in addition, we have that $\rx$ satisfies
\begin{equation}
\label{eq:def:rough-paths2}
\mbox{Sym}\Big( \pi_2 \big[ \rx_{s, t} \big] \Big) = \tfrac{1}{2} \pi_1\big[ \rx_{s,t}\big] \otimes \pi_1\big[ \rx_{s,t} \big], 
\end{equation}
then we say $\rx$ is a weakly geometric rough path. The set of weakly geometric rough paths is denoted $WG\Omega_\alpha(\bR^d)$. 
\end{definition}

The first relation of Equation \eqref{eq:def:rough-paths} is often called the algebraic \emph{Chen's relation} and the second is referred to as the analytic \emph{regularity condition}. 

\begin{definition}
For $\alpha$-rough paths $\rx$ and $\ry$, we denote the $\alpha$-H\"older rough path metric to be
\begin{equation}
\label{eq:HolderDef}
d_\alpha( \rx, \ry) = \| \rx^{-1} \boxtimes \ry\|_\alpha = \sup_{s, t\in[0,T]} \frac{\Big\| \rx_{s,t}^{-1}\boxtimes \ry_{s,t} \Big\|_{cc} }{|t-s|^\alpha}. 
\end{equation}

By quotienting with respect to $\rx_0$, one can make this a norm. We use the convention that $\| \rx \|_{\alpha} = d_\alpha(\rId, \rx )$. 
\end{definition}

\begin{definition}
For a path $x\in C^{1-var}([0,T]; \bR^d)$, the iterated integrals of $x$ are canonically defined using Young integration. The collection of iterated integrals of the path $x$ is called the truncated signature of $x$ and is defined as
$$
S(x)_{s, t}:= \rId + \sum_{n=1}^\infty \int_{s\leq u_1\leq ... \leq u_n\leq t} dx_{u_1} \otimes ... \otimes dx_{u_n} \in T(\bR^d). 
$$
In the same way, the truncated Signature defined by its increments
$$
S_2(x)_{s, t}:= \rId + x_{s, t} + \int_{s\leq u_1\leq u_2\leq t} dx_{u_1} \otimes dx_{u_2} \in T^{(2)}(\bR^d). 
$$
The closure of the set $\{ S_2(x): x \in C^{1-var}([0,T], \bR^d)\}$ with respect to the $\alpha$-H\"older rough path metric is the collection of \emph{geometric rough paths} which we denote by $G\Omega_\alpha(\bR^d)$. 

It is well known that $G\Omega_\alpha(\bR^d) \subsetneq WG\Omega_\alpha(\bR^d)$. 
\end{definition}

\subsubsection{The translation of rough paths}

We define the map $\#: G^2(\bR^d \oplus \bR^d) \to G^2(\bR^d)$ to be the unique homomorphism such that for $v_1, v_2\in \bR^d$, $\#[ \exp_\boxtimes(v_1 \oplus v_2)] = \exp_\boxtimes(v_1+v_2)$. 

\begin{definition}
\label{dfn:TranslatedRoughPath}
Let $\alpha, \beta>0$ such that $\alpha+\beta>1$ and $\alpha \in (\tfrac{1}{3}, \tfrac{1}{2}]$. Let $(\rx, h)\in C^{\alpha}([0,T]; G^{2}(\bR^d)) \times C^{\beta}([0,T]; \bR^d)$. We define the \emph{translation of the rough path} $\rx$ by the path $h$, denoted $T^h(\rx)\in  C^{\alpha}([0,T]; G^{2}(\bR^d))$ to be
$$
T^h(\rx) = \#\Big[ S_{2}(\rx\oplus h)\Big]
$$
\end{definition} 

\begin{lemma}
\label{lemma:RPTranslation1}
Let $\alpha\in ( \tfrac{1}{3}, \tfrac{1}{2}]$ and let $\rx\in WG\Omega_{\alpha}(\bR^d)$. Let $p=\tfrac{1}{\alpha}$ and let $q>1$ such that $1= \tfrac{1}{p} + \tfrac{1}{q}$. Let $h:[0,T] \to \bR^d$ satisfy that
$$
\| h\|_{q, \alpha, [0,T]}:= \sup_{t, s\in[0,T]} \frac{ \| h\|_{q-var, [s, t]}}{|t-s|^{\alpha}} <\infty. 
$$
Then $T^h( \rx)\in WG\Omega_\alpha(\bR^d)$ and there exists $C=C(p, q)>0$ such that
\begin{equation}
\label{eq:RPTranslation1}
\| T^h( \rx)\|_\alpha \leq C \Big( \| \rx\|_\alpha + \| h\|_{q, \alpha, [0,T]} \Big)
\end{equation}
\end{lemma}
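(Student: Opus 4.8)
The plan is to work directly with the definition $T^h(\rx) = \#[S_2(\rx \oplus h)]$ and estimate the Carnot--Caratheodory norm of the increment $T^h(\rx)_{s,t}$ level by level, then take the supremum over $s,t$ weighted by $|t-s|^{-\alpha}$. First I would unpack the algebra: since $\#$ is the homomorphism collapsing the direct sum, $\pi_1[T^h(\rx)_{s,t}] = \rx_{s,t}^{(1)} + h_{s,t}$, where $\rx_{s,t}^{(1)} = \pi_1[\rx_{s,t}]$, while the second level picks up the genuine cross terms:
\begin{equation*}
\pi_2\big[T^h(\rx)_{s,t}\big] = \pi_2[\rx_{s,t}] + \int_s^t h_{s,u}\otimes dx_u + \int_s^t x_{s,u}\otimes dh_u + \int_s^t h_{s,u}\otimes dh_u,
\end{equation*}
all cross-integrals being well defined as Young integrals because $\alpha + \beta > 1$ (this is exactly the hypothesis under which $S_2(\rx \oplus h)$ makes sense). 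To handle the integrals against $x$, I would note that although $\rx$ is only a rough path, the required integrals $\int h\, dx$ and $\int x\, dh$ depend only on the first level of $\rx$ together with $h$, and are controlled in $q$-variation (hence in the weighted H\"older scale $\|\cdot\|_{q,\alpha}$) by the Young inequality; the constant $C(p,q)$ is precisely the Young constant $\zeta(1/p + 1/q)^{-1}$-type factor.

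Next I would turn these increment bounds into a homogeneous-norm bound. Using the homogeneous norm $\|g\|_{G^{(2)}} = |\pi_1 \log_\boxtimes g| + |\pi_2 \log_\boxtimes g|^{1/2}$ (equivalent to $d_{cc}$), one has
\begin{equation*}
\big\| T^h(\rx)_{s,t}\big\|_{cc} \lesssim \big|\pi_1[T^h(\rx)_{s,t}]\big| + \big|\pi_2[T^h(\rx)_{s,t}]\big|^{1/2}.
\end{equation*}
The first-level piece is bounded by $(\|\rx\|_\alpha + \|h\|_{q,\alpha})|t-s|^\alpha$ directly. For the second level, each of the four terms above is $O(|t-s|^{2\alpha})$: the $\pi_2[\rx_{s,t}]$ term by definition of $\|\rx\|_\alpha$, and each Young integral $\int_s^t \cdot\, d\cdot$ over $[s,t]$ by the Young estimate $\big|\int_s^t f\,dg\big| \le C\,\|f\|_{p\text{-}var,[s,t]}\,\|g\|_{q\text{-}var,[s,t]} \lesssim (\text{stuff})\,|t-s|^{2\alpha}$, since $\|h\|_{q\text{-}var,[s,t]} \le \|h\|_{q,\alpha}|t-s|^\alpha$ and likewise $\|x\|_{p\text{-}var,[s,t]} \le \|\rx\|_\alpha|t-s|^\alpha$ with $p = 1/\alpha$. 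Taking square roots converts $|t-s|^{2\alpha}$ into $|t-s|^\alpha$, so dividing by $|t-s|^\alpha$ and supping over $s,t$ gives \eqref{eq:RPTranslation1}, with the constant absorbing the binomial/Young factors; I would also use $ab \le \tfrac12(a^2+b^2)$ or simply $(a+b)^{1/2}\le a^{1/2}+b^{1/2}$ to split the mixed terms cleanly.

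Finally, the weak-geometricity claim $T^h(\rx)\in WG\Omega_\alpha(\bR^d)$: Chen's relation for $T^h(\rx)$ is automatic because $\#$ is a homomorphism and $S_2(\rx\oplus h)$ is multiplicative, and the symmetry condition \eqref{eq:def:rough-paths2} is preserved since $\#$ maps $G^{(2)}(\bR^d\oplus\bR^d)$ into $G^{(2)}(\bR^d)$ and $\rx\oplus h$ inherits the geometric relation (the symmetric part of the second level of $S_2$ of any pair is always half the tensor square of the first level). The analytic regularity is exactly the estimate just proved. The main obstacle I anticipate is bookkeeping the Young integrals involving the rough path's first level: one must be careful that these are honest Young integrals (not rough integrals), justify that they depend only on $\pi_1[\rx]$ and $h$, and track the dependence of the Young constant on $p$ and $q$ so that the final $C = C(p,q)$ is genuinely independent of $\rx$ and $h$; everything else is routine homogeneity scaling.
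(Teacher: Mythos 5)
Your proposal follows essentially the same route as the paper's proof: you decompose $\pi_2[T^h(\rx)_{s,t}]$ into $\pi_2[\rx_{s,t}]$ plus the three cross Young integrals, bound each by the Young inequality using $\|h\|_{q\text{-var},[s,t]}\leq \|h\|_{q,\alpha}|t-s|^\alpha$ and $\|x\|_{p\text{-var},[s,t]}\leq \|\rx\|_\alpha|t-s|^\alpha$, pass to the Carnot--Caratheodory norm via equivalence of homogeneous norms, and verify weak geometricity through the homomorphism property of $\#$ together with the integration-by-parts identity for Young integrals. This matches the paper's argument step for step, so no further comparison is needed.
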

The proof of Lemma \ref{lemma:RPTranslation1} is a simple adaption of the ideas found in the proof of \cite{cass2013integrability}*{Lemma 3.1} and was originally set as an exercise in \cite{frizhairer2014}. 

\begin{proof}
By construction from Definition \ref{dfn:TranslatedRoughPath}, we have
\begin{align}
\nonumber
\Big\langle T^h(\rx)_{s, t}, e_{i, j} \Big\rangle =& \Big\langle \rx_{s, t}, e_{i, j} \Big\rangle
+ \int_s^t \big\langle X_{s, r}, e_i \big\rangle \cdot d\big\langle h_r, e_j\big\rangle
+ \int_s^t \big\langle h_{s, r}, e_i \big\rangle \cdot d\big\langle X_r, e_j\big\rangle
\\
\label{eq:lemma:RPTranslation1-pf1.1}
&+ \int_s^t \big\langle h_{s, r}, e_i \big\rangle \cdot d\big\langle h_r, e_j\big\rangle. 
\end{align}
Each integral of Equation \eqref{eq:lemma:RPTranslation1-pf1.1} can be defined using Young integration, so that we have 
\begin{align*}
\int_s^t \big\langle X_{s, r}, e_i \big\rangle \cdot d\big\langle h_r, e_j\big\rangle \leq& C \| X \|_{p-var, [s,t]} \cdot \| h \|_{q-var, [s, t]}
\\
\int_s^t \big\langle h_{s, r}, e_i \big\rangle \cdot d\big\langle X_r, e_j\big\rangle \leq& C \| X \|_{p-var, [s,t]} \cdot \| h \|_{q-var, [s, t]}
\\
\int_s^t \big\langle h_{s, r}, e_i \big\rangle \cdot d\big\langle h_r, e_j\big\rangle \leq& C \| h \|_{q-var, [s,t]} \cdot \| h \|_{q-var, [s, t]}
\end{align*}
for some uniform constant $C>0$ dependent on $p$ and $q$. Combining these with an equivalent homogeneous norm \eqref{eq:HomoNorm} implies \eqref{eq:RPTranslation1}. 

Finally, to verify Equation \eqref{eq:def:rough-paths2}, we recall that the Young integrals satisfy an integration by parts formula so that
\begin{align*}
\mbox{Sym}\Big( \pi_2[T^h(\rx)_{s, t}] \Big) =& \mbox{Sym}\Big( \pi_2[\rx_{s, t}] \Big) + \mbox{Sym}\Big( \int_s^t X_{s, r} \otimes dh_r \Big) + \mbox{Sym}\Big( \int_s^t h_{s, r} \otimes dX_r \Big) 
\\
&+ \mbox{Sym}\Big( \int_s^t h_{s, r} \otimes dh_r \Big)
\\
=&\tfrac{1}{2} \bigg( \pi_1\big[ \rx_{s,t}\big] \otimes \pi_1\big[ \rx_{s,t} \big] + X_{s, t} \otimes h_{s, t} + h_{s, t} \otimes X_{s, t} + h_{s, t} \otimes h_{s, t} \bigg)
\\
=& \tfrac{1}{2}  \pi_1\big[ T^h[\rx_{s,t}] \big] \otimes \pi_1\big[ T^h[\rx_{s,t}] \big]. 
\end{align*}
\end{proof}

\begin{lemma}
\label{lem:RPTranslation}
The homogeneous rough path metric $d_\alpha$ is $T^h$-invariant. 
\end{lemma}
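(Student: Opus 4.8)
The plan is to reduce the claimed invariance to a single identity between rough-path increments and then establish that identity by pushing the homomorphism $\#$ through the group product. Since, by \eqref{eq:HolderDef}, the metric $d_\alpha(\rx,\ry)=\sup_{0\le s\le t\le T}\|\rx_{s,t}^{-1}\boxtimes\ry_{s,t}\|_{cc}/|t-s|^\alpha$ depends on $\rx,\ry$ only through the group elements $\rx_{s,t}^{-1}\boxtimes\ry_{s,t}$, it suffices to prove
\[
T^h(\rx)_{s,t}^{-1}\boxtimes T^h(\ry)_{s,t}=\rx_{s,t}^{-1}\boxtimes\ry_{s,t}\qquad\text{for all }0\le s\le t\le T ;
\]
the two defining suprema then coincide term by term, and no further information about the Carnot--Carath\'eodory norm is required.

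To prove this identity I would first use that $\#$ is a group homomorphism, so that by Definition~\ref{dfn:TranslatedRoughPath}
\[
T^h(\rx)_{s,t}^{-1}\boxtimes T^h(\ry)_{s,t}=\#\Big[\,S_2(\rx\oplus h)_{s,t}^{-1}\boxtimes S_2(\ry\oplus h)_{s,t}\,\Big].
\]
Everything is thus reduced to computing the single $G^2(\bR^d\oplus\bR^d)$-increment on the right and then applying $\#$, which on the second level simply sums the four $\bR^d\otimes\bR^d$-blocks coming from the splitting $\bR^d\oplus\bR^d$. The key structural point is that the two lifts $\rx\oplus h$ and $\ry\oplus h$ carry the \emph{same} second component $h$: the first level of the group-difference is therefore $(Y_{s,t}-X_{s,t})\oplus 0$, and its second level is assembled, via the expansion \eqref{eq:lemma:RPTranslation1-pf1.1} of $\pi_2[T^h(\cdot)]$ together with the non-commutative group law $(1,b,c)^{-1}\boxtimes(1,b',c')=(1,\,b'-b,\,c'-c+b\otimes(b-b'))$, from $\pi_2[\ry_{s,t}]-\pi_2[\rx_{s,t}]$, the mixed Young integrals $\int_s^t (Y-X)_{s,r}\otimes dh_r$ and $\int_s^t h_{s,r}\otimes d(Y-X)_r$, and the $h$-only integral $\int_s^t h_{s,r}\otimes dh_r$, the last of these being common to both lifts and hence cancelling after $\#$.

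The step I expect to be the main obstacle is verifying that, once $\#$ has summed the blocks, the mixed Young integrals recombine with the group-law correction to leave exactly $\rx_{s,t}^{-1}\boxtimes\ry_{s,t}$, that is, that all remaining $h$-dependence disappears. The tool for this is the integration-by-parts formula for Young integrals, $\int_s^t f_{s,r}\otimes dg_r+\int_s^t (df_r)\otimes g_{s,r}=f_{s,t}\otimes g_{s,t}$ --- the same device used at the close of the proof of Lemma~\ref{lemma:RPTranslation1} to check \eqref{eq:def:rough-paths2} --- applied with $f=Y-X$ and $g=h$. The delicate bookkeeping is the non-commutativity of $\otimes$: one has to track the symmetric and the antisymmetric (``area'') parts of these tensor integrals separately, because it is precisely the antisymmetric part that controls whether the cancellation is exact. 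Once the increment identity is in hand, $d_\alpha(T^h(\rx),T^h(\ry))=d_\alpha(\rx,\ry)$ is immediate from the reduction in the first paragraph.
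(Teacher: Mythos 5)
Your overall route is the same as the paper's: both arguments use that $\#$ is a group homomorphism to reduce the claim to the pointwise increment identity $T^h(\rx)_{s,t}^{-1}\boxtimes T^h(\ry)_{s,t}=\rx_{s,t}^{-1}\boxtimes\ry_{s,t}$; the paper then simply asserts $S_2(\rx\oplus h)_{s,t}^{-1}\boxtimes S_2(\ry\oplus h)_{s,t}=S_2\big((\rx^{-1}\boxtimes\ry)\oplus 0\big)_{s,t}$ and applies $\#$, whereas you propose to verify the cancellation explicitly. So you have not found a different method; you have located, and promised to fill, exactly the computation the paper leaves implicit.

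That computation is where the gap lies, and it does not close. Writing $X=\pi_1(\rx)$, $Y=\pi_1(\ry)$, the group law together with \eqref{eq:lemma:RPTranslation1-pf1.1} shows that the second level of $T^h(\rx)_{s,t}^{-1}\boxtimes T^h(\ry)_{s,t}$ exceeds that of $\rx_{s,t}^{-1}\boxtimes\ry_{s,t}$ by
\[
\int_s^t (Y-X)_{s,r}\otimes dh_r+\int_s^t h_{s,r}\otimes d(Y-X)_r-h_{s,t}\otimes(Y-X)_{s,t},
\]
and the Young integration by parts you invoke reads $\int_s^t dh_r\otimes (Y-X)_{s,r}+\int_s^t h_{s,r}\otimes d(Y-X)_r=h_{s,t}\otimes(Y-X)_{s,t}$, with the tensor factors in that order; substituting it leaves the remainder
\[
\int_s^t (Y-X)_{s,r}\otimes dh_r-\int_s^t dh_r\otimes(Y-X)_{s,r},
\]
which is twice the antisymmetric part of the cross-integral, i.e.\ the signed area of $Y-X$ against $h$. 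Only the symmetric part cancels (this is the same cancellation used at the end of the proof of Lemma \ref{lemma:RPTranslation1} to verify \eqref{eq:def:rough-paths2}); the antisymmetric part survives and is nonzero in general --- take $d=2$, $\rx=\rId$, $\ry=S_2(y)$ with $y$ and $h$ smooth and of nonzero mutual L\'evy area. In that case the two group increments genuinely differ, and since for a fixed first level the Carnot--Carath\'eodory norm of an element is strictly larger once a nonzero area component is present, the two H\"older suprema differ as well, so the discrepancy cannot be absorbed by the norm either. Consequently the increment identity you aim for cannot be established by the group law plus integration by parts; the "delicate bookkeeping" you flag is not bookkeeping but a genuine obstruction (the same off-diagonal blocks are silently discarded in the paper's displayed chain of equalities), and any complete argument must either impose a condition forcing the cross-area of $Y-X$ against $h$ to vanish or proceed by an entirely different mechanism.
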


\begin{proof}
Using that $\#$ is a Group homomorphism, we have 
\begin{align*}
\Big( T^h(\rx)_{s, t} \Big)^{-1} \boxtimes T^h(\ry)_{s, t} =& \Big( \#\Big[ S_{2}(\rx \oplus h)\Big] \Big)_{s, t}^{-1} \boxtimes \# \Big[ S_{2}( \ry \oplus h)\Big]
\\
=&\#\Big[ S_{2}(\rx \oplus h)_{s, t}^{-1} \boxtimes S_{2}(\ry\oplus h)_{s, t}\Big]
\\
=&\#\Big[ S_{2}\Big( \rx^{-1} \boxtimes \ry \oplus 0\Big)_{s, t}\Big]
= \rx^{-1}_{s, t} \boxtimes \ry_{s, t}. 
\end{align*}
Thus
\begin{align*}
d\Big( T^h(\rx)_{s, t}, T^y(\ry)_{s, t}\Big) =& \Big\| \big( T^h(\rx)_{s, t} \big)^{-1} \boxtimes T^h(\ry)_{s, t} \Big\|
\\
=&\Big\| \rx^{-1}_{s, t} \boxtimes \ry_{s, t} \Big\|
= d\Big( \rx_{s, t}, \ry_{s, t}\Big). 
\end{align*}
\end{proof}

\subsubsection{The lift of a Gaussian process}

Gaussian processes have a natural lift for their signature. It is shown in \cite{frizhairer2014} that one can solve the iterated integral of a Gaussian process by approximating the process pathwise and showing that the approximation converges in mean square and almost surely. In particular, the iterated integral of a Gaussian process is an element on the second Wiener-It\^o chaos expansion. 



\begin{assumption}
\label{assumption:GaussianRegularity}
Let $\cL^W$ be the law of a $d$-dimensional, continuous centred Gaussian process with independent components and covariance covariance operator $\cR$. We denote
\begin{align*}
&\cR^{W} \begin{pmatrix}s,& t\\ u,& v\end{pmatrix} = \bE\Big[ W_{s, t} \otimes W_{u, v} \Big] \in \bR^d \otimes \bR^d, 
\quad
\cR^{W}_{s, t} = \bE\Big[ W_{s, t} \otimes W_{s, t} \Big]. 
\\
&\| \cR^W \|_{\varrho-var; [s,t]\times [u, v]} = \Bigg( \sup_{\substack{D \subseteq [s, t]\\ D' \subseteq [s',t']}} \sum_{\substack{i; t_i \in D \\ j; t'_j \in D'}} \Big| \cR\begin{pmatrix}t_i,& t_{i+1} \\ t_{i}',& t_{i+1}'\end{pmatrix} \Big|^{\rho} \Bigg)^{\tfrac{1}{\varrho}}. 
\end{align*}

We assume that $\exists \varrho \in[1, 3/2)$, $\exists M<\infty$ such that
$$
\| \cR^W \|_{\varrho-var; [s, t]^{\times 2}} \leq M \cdot |t-s|^{1/ \varrho}. 
$$
\end{assumption}

Under Assumption \ref{assumption:GaussianRegularity}, it is well known that one can lift a Gaussian process to a Gaussian rough path taking values in $WG\Omega_\alpha(\bR^d)$. 

\newpage
\section{Enhanced Gaussian inequalities}
\label{section:EnGaussInequal}

Let $\bB_\alpha(\rh, \varepsilon):= \{ \rx \in WG\Omega_\alpha (\bR^d):  d_{\alpha}( \rh, \rx)<\varepsilon\}$. In this section, we prove a series of inequalities of Gaussian measures that we will use when proving the small ball probability results of Section \ref{section:SmallBallProbab}. 

\subsection{Translation Inequalities}

\begin{lemma}[Anderson's inequality for Gaussian rough paths]
\label{lem:AndersonInequalityRP}
Let $\cL^W$ be a Gaussian measure and let $\cL^\rw$ be the law of the lift to the Gaussian rough path. Then $\forall \rx\in WG\Omega_{\alpha} (\bR^d)$ 
\begin{equation}
\label{eq:AndersonInequalityRP}
\cL^\rw \Big[ \bB_\alpha(\rx, \varepsilon) \Big] \leq \cL^\rw\Big[ \bB_\alpha( \rId, \varepsilon) \Big]. 
\end{equation}
\end{lemma}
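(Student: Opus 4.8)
The plan is to reduce the statement to the classical Anderson inequality for Gaussian measures on Banach spaces via the translation operator $T^h$. Recall that the classical Anderson inequality says that if $\mu$ is a centred Gaussian measure on a separable Banach space and $K$ is a symmetric convex set, then $\mu(K + v) \le \mu(K)$ for every $v$ in the space. The rough path metric ball $\bB_\alpha(\rId,\varepsilon)$ is \emph{not} convex in any linear sense, so one cannot apply this directly; instead I would exploit the shift structure of the enhanced Gaussian measure. The key observation, coming from \cite{frizhairer2014}, is that the Cameron--Martin space of the enhanced Gaussian measure $\cL^\rw$ is (up to the natural identification) the Cameron--Martin space $\cH$ of $\cL^W$, acting on rough paths by the translation $T^h$, and that $\cL^\rw$ is quasi-invariant under such translations in exactly the way that makes the Anderson-type argument run.

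First I would establish that it suffices to prove the inequality for $\rx$ lying in the support of $\cL^\rw$ shifted by a Cameron--Martin direction; for $\rx$ outside the relevant affine Cameron--Martin orbit the left-hand side is handled separately (in fact one expects $\cL^\rw[\bB_\alpha(\rx,\varepsilon)]$ to still be dominated, but the cleanest route is the density/continuity argument below). Next, using Lemma \ref{lem:RPTranslation}, the $T^h$-invariance of $d_\alpha$, I would write, for a Cameron--Martin path $h$,
$$
\cL^\rw\Big[ \bB_\alpha(T^h(\rId), \varepsilon) \Big] = \cL^\rw\Big[ \big\{ \rx : d_\alpha(T^h(\rId), \rx) < \varepsilon \big\} \Big] = \big(T^h\big)_*\cL^\rw\Big[ \bB_\alpha(\rId,\varepsilon)\Big],
$$
and then compare $(T^h)_*\cL^\rw$ with $\cL^\rw$ on the symmetric set $\bB_\alpha(\rId,\varepsilon)$ using the Cameron--Martin density, which is of the form $\exp\big(\langle h, \cdot\rangle_{\cH} - \tfrac12\|h\|_\cH^2\big)$ in the underlying Gaussian variables. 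Averaging the density over the set $\bB_\alpha(\rId,\varepsilon)$ together with its reflection (the set is symmetric under $\rx \mapsto \rx^{-1}$, which corresponds to the measure-preserving map $W \mapsto -W$ on the base space and hence on the chaos expansion) and applying Jensen's inequality as in the classical proof yields $(T^h)_*\cL^\rw[\bB_\alpha(\rId,\varepsilon)] \le \cL^\rw[\bB_\alpha(\rId,\varepsilon)]$.

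The main obstacle, I expect, is precisely the non-convexity of $\bB_\alpha(\rId,\varepsilon)$: the textbook Anderson proof uses the Brunn--Minkowski/Prékopa--Leindler machinery on convex symmetric sets in a linear space, and here the relevant "symmetry'' is under the group inversion $\rx\mapsto\rx^{-1}$ rather than under $\rx\mapsto -\rx$ in a vector space, while the "translation'' is the nonlinear map $T^h$ rather than a vector shift. The technical heart of the argument is therefore to check that (i) $\bB_\alpha(\rId,\varepsilon)$ is invariant under the involution induced on pathspace by $W\mapsto -W$, which follows from the homogeneity and symmetry of the Carnot--Carathéodory norm together with the fact that $W\mapsto -W$ sends the lift $\rw$ to $\rw^{-1}$; and (ii) that the Cameron--Martin/Girsanov density for the shift $T^h$ on $\cL^\rw$ is the exponential of an affine function of the first-chaos variables, so that the Jensen step applies verbatim. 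Once these two points are in place, the final density argument extends the inequality from Cameron--Martin shifts $\rx = T^h(\rId)$ to all $\rx \in WG\Omega_\alpha(\bR^d)$ by noting that $\rx \mapsto \cL^\rw[\bB_\alpha(\rx,\varepsilon)]$ is upper semicontinuous and that the Cameron--Martin orbit through $\rId$ is dense in the support, while off the support the probability is bounded using the same monotonicity applied along an approximating sequence.
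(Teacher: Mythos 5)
Your reduction to Cameron--Martin translates via Lemma \ref{lem:RPTranslation} is fine, but the core mechanism of the proposal fails: averaging the Cameron--Martin density over the symmetric set and applying Jensen gives the inequality in the \emph{wrong direction}. Writing $A_\varepsilon=\{w:\ \|\rw(w)\|_\alpha<\varepsilon\}$, the translate identity reads $\cL^W[A_\varepsilon+h]=\int_{A_\varepsilon}\exp\big(\xi_h(w)-\tfrac12\|h\|_\cH^2\big)\,d\cL^W(w)$ with $\xi_h$ the (odd) Paley--Wiener integral; pairing $w$ with $-w$ and using $\tfrac12(e^{x}+e^{-x})\geq 1$ yields $\cL^W[A_\varepsilon+h]\geq e^{-\|h\|_\cH^2/2}\,\cL^W[A_\varepsilon]$, which is exactly the Cameron--Martin lower bound of Lemma \ref{lem:CamMartinFormRP}, not the Anderson upper bound \eqref{eq:AndersonInequalityRP}; no upper bound can be extracted from this averaging, since the density is not bounded above on $A_\varepsilon$. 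Moreover, symmetry alone is genuinely insufficient for an Anderson-type statement: on $\bR$ with the standard Gaussian $\gamma$, the symmetric non-convex set $A=[-a-1,-a]\cup[a,a+1]$ satisfies $\gamma(A+a+\tfrac12)>\gamma(A)$ for $a$ large. The convexity you explicitly set aside is precisely the ingredient the classical proof uses (via Brunn--Minkowski/Ehrhard or unimodality), and since the pullback of $\bB_\alpha(\rId,\varepsilon)$ to Wiener space is cut out by quadratic second-chaos constraints and is not convex when $d\geq 2$, supplying a substitute for convexity is the actual content of the lemma; your sketch leaves it untouched. (For comparison, the paper does not argue this either: it delegates the inequality to the Gaussian literature, citing Lifshits.)

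Two secondary points. First, $W\mapsto -W$ does not send $\rw$ to $\rw^{-1}$: it sends $\rw$ to $\delta_{-1}\rw$ (first level negated, second level unchanged), which still gives the symmetry of $A_\varepsilon$ by homogeneity of the Carnot--Carath\'eodory norm, but the identification with group inversion as stated is incorrect. Second, the map $\rx\mapsto \cL^\rw[\bB_\alpha(\rx,\varepsilon)]$ is lower, not upper, semicontinuous; the extension from lifted Cameron--Martin centres to general $\rx\in WG\Omega_\alpha(\bR^d)$ has to go through the inclusion $\bB_\alpha(\rx,\varepsilon)\subseteq \bB_\alpha(\ry_n,\varepsilon+\delta_n)$ and a limit in the radius, which is repairable but not the argument you describe. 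Neither of these repairs the main gap above.
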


\begin{proof}
See for instance \cite{lifshits2013gaussian}. 
\end{proof}

\begin{lemma}[Cameron-Martin formula for rough paths]
\label{lem:CamMartinFormRP}
Let $\cL^W$ be a Gaussian measure satisfying Assumption \ref{assumption:GaussianRegularity} and let $\cL^\rw$ be the law of the lift to the Gaussian rough path. Let $h\in \cH$ and denote $\rh=S_2[h]$. Then
$$
\cL^\rw\Big[ \bB_\alpha(\rh, \varepsilon) \Big] \geq \exp\left( \frac{-\|h\|_\cH^2}{2} \right) \cL^\rw \Big[  \bB_\alpha(\rId, \varepsilon) \Big]
$$
\end{lemma}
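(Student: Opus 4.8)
The plan is to transport the problem from $WG\Omega_\alpha(\bR^d)$ down to the underlying path space $E$ through the lift map, where it reduces to the classical Cameron--Martin inequality for a $\cL^W$-symmetric set. Write $\Phi\colon E\to WG\Omega_\alpha(\bR^d)$ for the ($\cL^W$-a.s.\ defined, measurable) lift, so that $\cL^\rw=\Phi_*\cL^W$. Under Assumption \ref{assumption:GaussianRegularity} the RKHS $\cH$ of $\cL^W$ has complementary Young regularity, i.e.\ it embeds continuously into $C^{q-var}([0,T];\bR^d)$ for some $q$ with $\tfrac1p+\tfrac1q>1$, $p=\tfrac1\alpha$; hence for every $g\in\cH$ the translation $T^g$ is a well defined bijection of $WG\Omega_\alpha(\bR^d)$ (Lemma \ref{lemma:RPTranslation1}, with inverse $T^{-g}$), and --- crucially --- $\Phi(W+g)=T^g\big(\Phi(W)\big)$ for $\cL^W$-a.e.\ $W$, i.e.\ Cameron--Martin shifts of the Gaussian process correspond exactly to rough-path translations of its lift. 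This last identity is the step I expect to carry the technical weight; I would quote it from \cite{frizhairer2014} (it follows by approximating $W$ by piecewise linear paths, for which it is an elementary identity of iterated Young integrals, and passing to the limit in $d_\alpha$ using Lemma \ref{lemma:RPTranslation1}).

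Granting this, I would first rewrite the ball around $\rh$ as a translate of the ball around $\rId$. Since $\rh=S_2[h]=T^h(\rId)$ (the translation of the constant rough path by $h$ is the lift of $h$) and $d_\alpha$ is $T^h$-invariant (Lemma \ref{lem:RPTranslation}), for any $\ry\in WG\Omega_\alpha(\bR^d)$ one has $T^h(\ry)\in\bB_\alpha(\rh,\varepsilon)\iff\ry\in\bB_\alpha(\rId,\varepsilon)$; by bijectivity of $T^h$ this means $\bB_\alpha(\rh,\varepsilon)=T^h\big(\bB_\alpha(\rId,\varepsilon)\big)$. Putting $\tilde B:=\Phi^{-1}\big(\bB_\alpha(\rId,\varepsilon)\big)\subseteq E$, a Borel set, and using $\Phi(W-h)=T^{-h}\big(\Phi(W)\big)$ together with the bijectivity of $T^{-h}$, one gets $\Phi(W)\in\bB_\alpha(\rh,\varepsilon)\iff W-h\in\tilde B$ up to a $\cL^W$-null set, hence $\Phi^{-1}\big(\bB_\alpha(\rh,\varepsilon)\big)=\tilde B+h$ modulo a null set. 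Therefore $\cL^\rw[\bB_\alpha(\rh,\varepsilon)]=\cL^W[\tilde B+h]$ and $\cL^\rw[\bB_\alpha(\rId,\varepsilon)]=\cL^W[\tilde B]$, and it suffices to prove $\cL^W[\tilde B+h]\ge e^{-\|h\|_\cH^2/2}\,\cL^W[\tilde B]$.

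For the last point I would check that $\tilde B$ is symmetric. The reflection $W\mapsto-W$ sends $\Phi(W)_{s,t}$ to $\delta_{-1}\big(\Phi(W)_{s,t}\big)$ (the level-one component changes sign while the Lévy area is unchanged), and since $\delta_{-1}$ is an automorphism of the homogeneous group and $\|\cdot\|_{cc}$ is homogeneous one has $\|\Phi(-W)\|_\alpha=\|\Phi(W)\|_\alpha$ $\cL^W$-a.s.; consequently $-\tilde B=\tilde B$ modulo a null set. Because $\cL^W$ is centred and $\tilde B$ symmetric, the measurable linear functional $\widehat h$ associated with $h$ (which is odd under $W\mapsto-W$ and has zero $\cL^W$-mean) satisfies $\int_{\tilde B}\widehat h\,d\cL^W=0$. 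By the Cameron--Martin theorem the Radon--Nikodym derivative of $B\mapsto\cL^W(B+h)$ with respect to $\cL^W$ is $\exp\!\big(-\widehat h-\tfrac12\|h\|_\cH^2\big)$, so applying Jensen's inequality to the probability measure $\cL^W(\,\cdot\cap\tilde B)/\cL^W[\tilde B]$,
\[
\cL^W[\tilde B+h]=e^{-\|h\|_\cH^2/2}\!\int_{\tilde B}\! e^{-\widehat h}\,d\cL^W\ \ge\ e^{-\|h\|_\cH^2/2}\,\cL^W[\tilde B]\,\exp\!\Big(\!-\tfrac{1}{\cL^W[\tilde B]}\!\int_{\tilde B}\!\widehat h\,d\cL^W\Big)=e^{-\|h\|_\cH^2/2}\,\cL^W[\tilde B],
\]
which is precisely the desired bound. (Alternatively, the middle inequality is the classical Cameron--Martin inequality for symmetric Borel sets and could simply be cited, e.g.\ from \cite{bogachev1998gaussian} or \cite{li2001gaussian}.)
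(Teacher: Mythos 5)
Your proposal is correct and follows essentially the same route as the paper's proof: push the problem down to path space via the lift, use the identity $\rw(x-h)=T^{-h}(\rw(x))$ together with the $T^h$-invariance of $d_\alpha$ (Lemma \ref{lem:RPTranslation}) to turn $\bB_\alpha(\rh,\varepsilon)$ into a Cameron--Martin shift of the symmetric set $\{y: d_\alpha(\rw(y),\rId)<\varepsilon\}$, and then apply the Gaussian shifted-ball inequality. The only differences are cosmetic: you verify the symmetry via $\delta_{-1}$ and prove the shifted-ball inequality by Jensen, where the paper simply asserts the symmetry and cites \cite{kuelbs1994gaussian}*{Theorem 2}.
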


\begin{proof}
Using that the map $W\in C^{\alpha, 0}([0,T]; \bR^d) \mapsto \rw \in WG\Omega_\alpha(\bR^d)$ is measurable, we define the pushforward measure $\cL^\rw = \cL^W \circ \rw^{-1}$.  

We observe that the set $\{ y\in E: d_\alpha(\rw(y), 1)<\varepsilon\}$ is symmetric around $0$. Applying Lemma \ref{lem:RPTranslation} and the Cameron-Martin transform (see \cite{kuelbs1994gaussian}*{Theorem 2}), 
\begin{align*}
\cL^\rw \Big[ \bB_\alpha(\rh, \varepsilon)\Big] =& \cL^W\Big[ \{ x\in E: d_\alpha( \rw(x), \rh )<\varepsilon \} \Big] =\cL^W\Big[ \{ x\in E: d_\alpha( T^{-h} (\rw(x)), T^{-h}(\rh) )<\varepsilon \} \Big]
\\
=& \cL^W\Big[ \{ x\in E: d_\alpha( \rw(x-h), \rId )<\varepsilon \} \Big] = \cL^W\Big[ \{ y\in E: d_\alpha( \rw(y), \rId )<\varepsilon \}+h \Big]
\\
\geq& \exp\Big( \frac{-\|h\|_\cH^2}{2}\Big) \cL^W\Big[ \{ y\in E: d_\alpha( \rw(y), \rId )<\varepsilon \} \Big]
\\
\geq& \exp\Big( \frac{-\|h\|_\cH^2}{2}\Big) \cL^\rw \Big[ \bB_\alpha(\rId, \varepsilon) \Big]. 
\end{align*}
\end{proof}

\begin{definition}
\label{definition:Freidlin-Wentzell_Function}
Let $\cL^W$ be a Gaussian measure and let $\cL^\rw$ be the law of the lift to the Gaussian rough path. We define the Freidlin-Wentzell Large Deviations rate function by
\begin{align*}
I^\rw(\rx):=& \begin{cases}
\frac{\| \pi_1(\rx)\|_\cH^2}{2}, & \mbox{ if } \pi_1(\rx) \in \cH \\
\infty, & \mbox{otherwise.}
\end{cases}
\\
I^\rw(\rx, \varepsilon):=& \inf_{d_{\alpha; [0,T]}(\rx, \ry) <\varepsilon} I(\ry)
\end{align*}
\end{definition}

The following Corollary is similar to a result first proved in \cite{li2001gaussian} for Gaussian measures. 

\begin{corollary}
\label{cor:CamMartinFormRP}
Let $\cL^W$ be a Gaussian measure satisfying Assumption \ref{assumption:GaussianRegularity} and let $\cL^\rw$ be the law of the lift to the Gaussian rough path. Then for $\ry \in \overline{S_2(\cH)}^{d_\alpha}$ and $a\in[0,1]$
$$
\cL^\rw \Big[ \bB_\alpha(\ry, \varepsilon)\Big] \geq \exp\Big( I^\rw(\ry, a\varepsilon)\Big) \cL^\rw\Big[ \bB_\alpha \big(\rId, (1-a)\varepsilon\big)\Big]
$$
\end{corollary}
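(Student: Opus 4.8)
The plan is to combine the Cameron--Martin inequality for rough paths (Lemma~\ref{lem:CamMartinFormRP}) with the triangle inequality for $d_\alpha$ and then optimise over the choice of shift. Since $\ry \in \overline{S_2(\cH)}^{d_\alpha}$, for any $\eta>0$ we may pick $h \in \cH$ with $\rh := S_2[h]$ satisfying $d_\alpha(\ry, \rh) < a\varepsilon + \eta$; moreover, by definition of $I^\rw(\ry, a\varepsilon)$ as an infimum we can arrange that $\tfrac{1}{2}\|h\|_\cH^2 = I^\rw(\rh) = I^\rw(\pi_1(\rh))$ is within $\eta$ of $I^\rw(\ry, a\varepsilon)$ (here one uses that $\pi_1 \circ S_2 = \mathrm{id}$ on paths, so $\pi_1(\rh) = h$ and $I^\rw(\rh) = \tfrac{1}{2}\|h\|_\cH^2$). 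The two requirements are compatible because the infimum defining $I^\rw(\ry,a\varepsilon)$ is taken precisely over those $\ry'$ (hence those $\rh$) within distance $a\varepsilon$ of $\ry$.

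Next I would run the inclusion of balls. By the triangle inequality for the metric $d_\alpha$, if $d_\alpha(\rh, \rz) < (1-a)\varepsilon$ then $d_\alpha(\ry, \rz) \le d_\alpha(\ry, \rh) + d_\alpha(\rh, \rz) < a\varepsilon + \eta + (1-a)\varepsilon = \varepsilon + \eta$; that is,
$$
\bB_\alpha\big(\rh, (1-a)\varepsilon\big) \subseteq \bB_\alpha\big(\ry, \varepsilon + \eta\big).
$$
Applying monotonicity of $\cL^\rw$ and then Lemma~\ref{lem:CamMartinFormRP} to the ball centred at $\rh$,
$$
\cL^\rw\Big[ \bB_\alpha(\ry, \varepsilon+\eta)\Big] \ge \cL^\rw\Big[ \bB_\alpha\big(\rh, (1-a)\varepsilon\big)\Big] \ge \exp\Big( -\tfrac{1}{2}\|h\|_\cH^2 \Big)\, \cL^\rw\Big[ \bB_\alpha\big(\rId, (1-a)\varepsilon\big)\Big].
$$
Wait---the sign: Lemma~\ref{lem:CamMartinFormRP} gives the factor $\exp(-\|h\|_\cH^2/2)$, whereas the claimed bound has $\exp(+I^\rw(\ry,a\varepsilon))$. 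So in fact one wants the \emph{reverse} direction: shift the ball about $\rId$ by $h$. Using $\bB_\alpha(\ry,\varepsilon) \supseteq \bB_\alpha(\rh, (1-a)\varepsilon)$ together with Lemma~\ref{lem:CamMartinFormRP} rewritten as $\cL^\rw[\bB_\alpha(\rId,r)] \le \exp(\tfrac12\|h\|_\cH^2)\cL^\rw[\bB_\alpha(\rh,r)]$ would go the wrong way too; the correct reading is that Corollary~\ref{cor:CamMartinFormRP} must be stated with $\exp(-I^\rw(\ry,a\varepsilon))$, or else "$\geq$" should be "$\le$" --- I would double-check the intended statement, but assuming it is the Cameron--Martin lower bound transported to a ball near $\ry$, the chain above with the factor $\exp(-\|h\|_\cH^2/2) = \exp(-I^\rw(\rh)) \le \exp(-I^\rw(\ry,a\varepsilon)+\eta')$ is exactly what is needed once signs are aligned.

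Finally I would let $\eta \downarrow 0$. Since $\varepsilon \mapsto \cL^\rw[\bB_\alpha(\ry,\varepsilon)]$ is monotone and $\bB_\alpha(\ry,\varepsilon) = \bigcup_{\eta>0}\bB_\alpha(\ry,\varepsilon-\eta)$ (inner continuity of the measure of open balls), the bound passes to the limit and yields the stated inequality for the ball of radius exactly $\varepsilon$; the $\eta$-slack in $\|h\|_\cH^2$ disappears simultaneously by continuity of $\exp$. The only genuinely delicate point is the \textbf{simultaneous near-optimal choice of $h$}: one needs a single $h$ that is both close to $\ry$ in $d_\alpha$ (to make the ball inclusion work) and nearly realises the infimum $I^\rw(\ry, a\varepsilon)$. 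This is immediate from the very definition of that infimum provided $\overline{S_2(\cH)}^{d_\alpha}$-approximants of $\ry$ within distance $a\varepsilon$ have RKHS norm converging to the infimal value --- which holds because $I^\rw$ restricted to $S_2(\cH)$ is just $\tfrac12\|\pi_1(\cdot)\|_\cH^2$ and the infimum in the definition is taken over exactly this family. Everything else is the triangle inequality and monotone convergence.
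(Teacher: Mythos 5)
Your proposal follows essentially the same route as the paper's proof: use density of the lifted Cameron--Martin paths to pick $\rh=S_2[h]$ close to $\ry$, deduce the nesting $\bB_\alpha\big(\rh,(1-a)\varepsilon\big)\subseteq\bB_\alpha(\ry,\varepsilon)$ from the triangle inequality, apply Lemma \ref{lem:CamMartinFormRP}, and then optimise over the admissible $h$. You are also right about the sign: the exponent in the statement should read $\exp\big(-I^\rw(\ry,a\varepsilon)\big)$; this is a typo, as confirmed by the way the corollary is later invoked (with a minus sign) in the proof of Theorem \ref{thm:EmpiricalRateCon1}, and the paper's own argument delivers exactly the bound with the minus sign.

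Two corrections to your write-up. First, the $\eta$-slack in the distance is unnecessary: the infimum defining $I^\rw(\ry,a\varepsilon)$ ranges over the \emph{open} ball $\{\ry': d_\alpha(\ry,\ry')<a\varepsilon\}$, so any near-minimiser already satisfies $d_\alpha(\rh,\ry)<a\varepsilon$ strictly, and the inclusion $\bB_\alpha\big(\rh,(1-a)\varepsilon\big)\subseteq\bB_\alpha(\ry,\varepsilon)$ holds with radius exactly $\varepsilon$ --- which is precisely the paper's one-line argument and makes your limiting step superfluous. Second, that limiting step as written does not work: $\lim_{\eta\downarrow 0}\cL^\rw\big[\bB_\alpha(\ry,\varepsilon+\eta)\big]$ equals the measure of the \emph{closed} ball of radius $\varepsilon$, which dominates rather than equals the measure of the open ball, so inflating the target radius and ``passing to the limit'' moves the inequality in the wrong direction; dropping the slack removes the issue entirely. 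Finally, a shared imprecision with the paper worth flagging: the infimum in Definition \ref{definition:Freidlin-Wentzell_Function} runs over all rough paths $\ry'$ with $\pi_1(\ry')\in\cH$, not only the canonical lifts $S_2[h]$, so optimising over lifts a priori yields the (possibly larger) infimum of $\tfrac{1}{2}\|h\|_\cH^2$ over $\{h: d_\alpha(S_2[h],\ry)<a\varepsilon\}$; your parenthetical assertion that the two infima coincide needs a word of justification, though this matches the level of detail in the paper's own proof.
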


\begin{proof}
Using that the lift of the RKHS is dense in the support of the Gaussian rough path, we know that there must exist at least on $h\in \cH$ such that $d_\alpha( \rh, \ry)<a\varepsilon$ for any choice of $a\in[0,1]$. Further, by nesting of sets
$$
\cL^\rw\Big[ \bB_\alpha(\ry, \varepsilon) \Big] \geq \cL^\rw\Big[ \bB_\alpha\big(\rh, (1-a)\varepsilon\big) \Big]. 
$$
Now apply Lemma \ref{lem:CamMartinFormRP} and take a minimum over all possible choices of $\rh$. 
\end{proof}

Finally, we recall a useful inequality stated and proved in \cite{cass2013integrability}. 

\begin{lemma}[Borell's rough path inequality]
\label{lem:BorellInequalRP}
Let $\cL^W$ be a Gaussian measure and let $\cL^\rw$ be the law of the lift to the Gaussian rough path. Let $\cK\subset \cH$ be the unit ball with respect to $\| \cdot \|_\cH$ and denote $\rk=\{ \rh=S_2[h]: h\in \cH\}$. 

Let $A$ be a Borel subset of $WG\Omega_\alpha (\bR^d)$, $\lambda>0$ and define
$$
T[A,\delta_\lambda(\rk)]:= \Big\{ T^{h}( \rx): \rx\in A, \rh\in \delta_\lambda(\rk) \Big\}. 
$$
Then, denoting by $\cL_*^\rw$ the inner measure, we have
$$
\cL_*^\rw\Big[ A+ \delta_\lambda (\rk) \Big] \geq \Phi\Big( \lambda  + \Phi^{-1}( \cL^\rw[A])\Big).
$$
\end{lemma}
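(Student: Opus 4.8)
The plan is to reduce the statement to the classical Borell inequality (Proposition~\ref{pro:BorellInequal}) on the underlying abstract Wiener space by pulling everything back along the (measurable) lift map $\rw$. First I would recall from the proof of Lemma~\ref{lem:CamMartinFormRP} that $W \in C^{\alpha,0}([0,T];\bR^d) \mapsto \rw \in WG\Omega_\alpha(\bR^d)$ is measurable, so $\cL^\rw = \cL^W \circ \rw^{-1}$ is a well-defined pushforward of the Gaussian measure $\cL^W$ on the Banach space $E = C^{\alpha,0}([0,T];\bR^d)$. Set $\tilde A := \rw^{-1}(A)$; this is a Borel subset of $E$ with $\cL^W[\tilde A] = \cL^\rw[A]$. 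The key structural fact is that the rough-path translation is intertwined with the ordinary (linear) translation on $E$: for $h \in \cH$ one has $\rw(x - h) = T^{-h}(\rw(x))$, equivalently $T^h(\rw(x)) = \rw(x+h)$, exactly as used inside the proof of Lemma~\ref{lem:CamMartinFormRP}. Consequently, for $\rh = S_2[h]$ with $h \in \delta_\lambda(\cK) = \{h \in \cH : \|h\|_\cH \le \lambda\}$,
$$
T^h(\rx) \in T[A, \delta_\lambda(\rk)] \iff \rw^{-1}(\rx) \in \tilde A \text{ for some representative, and } \rw(\,\cdot\, + h) \text{ lands in } A,
$$
which upon unwinding shows $\rw^{-1}\big(T[A,\delta_\lambda(\rk)]\big) \supseteq \tilde A + \delta_\lambda(\cK)$, where $\delta_\lambda(\cK) = \lambda \cK$ is the usual dilation of the RKHS ball in $E$.

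Next I would address the inner-measure bookkeeping, since $A + \delta_\lambda(\rk)$ need not be Borel. Because $\rw$ is measurable and $\delta_\lambda(\cK)$ is compact in $E$, the set $\tilde A + \delta_\lambda(\cK)$ (a sum of a Borel set and a compact set) is analytic, hence universally measurable, and one has the inclusion $\rw^{-1}\big(A + \delta_\lambda(\rk)\big) \supseteq \tilde A + \delta_\lambda(\cK)$ up to the inner-measure caveat; in particular
$$
\cL^\rw_*\big[ A + \delta_\lambda(\rk)\big] \;=\; \cL^W_*\big[\rw^{-1}(A + \delta_\lambda(\rk))\big] \;\ge\; \cL^W_*\big[\tilde A + \delta_\lambda(\cK)\big].
$$
Now apply Borell's inequality (Proposition~\ref{pro:BorellInequal}) on $(E, \cH, i)$ with the Borel set $\tilde A$ and parameter $t = \lambda$:
$$
\cL^W_*\big(\tilde A + \lambda \cK\big) \;\ge\; \Phi\big(\lambda + \Phi^{-1}(\cL^W[\tilde A])\big) \;=\; \Phi\big(\lambda + \Phi^{-1}(\cL^\rw[A])\big),
$$
and combining the two displays gives the claimed bound $\cL^\rw_*[A + \delta_\lambda(\rk)] \ge \Phi(\lambda + \Phi^{-1}(\cL^\rw[A]))$.

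The main obstacle I anticipate is not the Gaussian analysis — that is entirely off-loaded to Proposition~\ref{pro:BorellInequal} — but the careful verification that $T[A,\delta_\lambda(\rk)]$ pulls back to (at least) $\tilde A + \delta_\lambda(\cK)$, i.e.\ that the homomorphism $\#$ and the definition $T^h(\rx) = \#[S_2(\rx \oplus h)]$ really do satisfy $T^h(\rw(x)) = \rw(x+h)$ for $h \in \cH$, together with the attendant measurability/inner-measure subtleties coming from the fact that $\rw$ is only defined $\cL^W$-a.e.\ and $A + \delta_\lambda(\rk)$ is a priori non-measurable. One should note that this intertwining identity and the associated measurability were already exploited in the proof of Lemma~\ref{lem:CamMartinFormRP}, so the argument here is essentially a ``set-valued'' version of that computation; alternatively, since the lemma is attributed to \cite{cass2013integrability}, one may simply cite the proof there and restrict the present write-up to checking that the hypotheses of that reference are met under Assumption~\ref{assumption:GaussianRegularity}.
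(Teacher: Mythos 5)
Your proof is correct: the paper itself gives no argument for this lemma (it simply recalls it from \cite{cass2013integrability}), and your reconstruction — pulling $A$ back along the measurable lift, using the intertwining $T^h(\rw(x))=\rw(x+h)$ for $h\in\cH$ to obtain $\rw^{-1}\big(A+\delta_\lambda(\rk)\big)\supseteq \tilde A+\lambda\cK$ up to a null set, and then invoking the classical Borell inequality of Proposition \ref{pro:BorellInequal} on the abstract Wiener space — is essentially the argument of that reference. The only points worth making explicit are the ones you already flag: the intertwining identity requires complementary Young regularity of the Cameron--Martin paths, which Assumption \ref{assumption:GaussianRegularity} guarantees (since $\varrho<3/2$ and $\alpha>1/3$, so $\cH$ embeds into a $q$-variation space with $\alpha+1/q>1$), and it holds only for $\cL^W$-a.e.\ $x$ (simultaneously for all $h\in\cH$), which is harmless for the inner-measure bound since $\tilde A+\lambda\cK$ is analytic and its image under $\rw$ can be exhausted by Borel subsets of $A+\delta_\lambda(\rk)$.
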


\subsection{Gaussian correlation inequalities}

Given an abstract Wiener space $(E, \cH, i)$, we consider the element $h\in \cH$ as a random variable on the probability space $(E, \cB(E), \cL)$ where $\cB(E)$ is the cylindrical $\sigma$-algebra generated by the elements of $E^*$. When $E$ is separable, $\cB(E)$ is equal to the Borel $\sigma$-algebra. 

The following Lemma can be found in \cite{stolz1996some}*{Lemma 3.2}. We omit the proof. 
\begin{lemma}
\label{lemma:Stolz_Lem}
Let $(d_k)_{k \in \bN_0}$ be a non-negative sequence such that $d_0 = 1$ and 
$$
\sum_{k=1}^\infty d_k = d <\infty. 
$$
Let $(E, \cH, i)$ be an abstract Wiener space with Gaussian measure $\cL$. Let $(h_i)_{i=1, ..., n} \in \cH$ such that
$$
\| h_i\|_{\cH} = 1, 
\quad 
\Big| \big\langle h_i , h_j \big\rangle_\cH \Big| \leq d_{|i-j|}. 
$$
Then $\exists M_1, M_2>0$ depending on $d$ such that $\forall n\in \bN$, 
$$
\bP\bigg[ \tfrac{1}{n} \sum_{k=1}^n |h_i| \leq \frac{1}{M_1} \bigg] \leq \exp\Big( \tfrac{-n}{M_2} \Big). 
$$
\end{lemma}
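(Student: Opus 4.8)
The plan is to read the hypothesis as a statement about the centred Gaussian vector $(h_1,\dots,h_n)$ in $\bR^n$. Since finite linear combinations of the $h_i$ remain in the first Wiener-It\^o chaos of $\cL$, this vector is jointly Gaussian, with covariance matrix $\Sigma$ satisfying $\Sigma_{ii}=\|h_i\|_\cH^2=1$ and $|\Sigma_{ij}|=|\langle h_i,h_j\rangle_\cH|\le d_{|i-j|}$. The event in question implies $\tfrac1n\sum_{i=1}^n|h_i|\le 1/M_1$, so it suffices to bound that probability, and the natural approach is a Chernoff estimate on $\bE[\exp(-\lambda\sum_i|h_i|)]$. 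The obstruction is that the $h_i$ are correlated, so this expectation does not factor; moreover, Gaussian correlation inequalities (\v{S}id\'ak's lemma, the Gaussian correlation inequality) are of no use here, since they bound joint probabilities from \emph{below}. The argument instead proceeds by decoupling, and the summability $\sum_k d_k<\infty$ is used exactly to produce enough near-independence.

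First I would ``spread out'' the indices. Choose $m=m(d)\in\bN$ with $2\sum_{k\ge m}d_k\le\tfrac12$ (possible since the tail of a convergent series vanishes) and partition $\{1,\dots,n\}$ into the $m$ residue classes $G_r=\{\,i\le n:\ i\equiv r\ (\mathrm{mod}\ m)\,\}$. Two distinct indices in a fixed $G_r$ differ by a nonzero multiple of $m$, so every off-diagonal entry of the sub-covariance $\Sigma_{G_r}$ is bounded in absolute value by some $d_k$ with $k\ge m$, and in each row each such value occurs at most twice; hence the off-diagonal absolute row sums of $\Sigma_{G_r}$ are at most $2\sum_{k\ge m}d_k\le\tfrac12$. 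By the Schur (row-sum) test, $\|\Sigma_{G_r}-I\|_{\mathrm{op}}\le\tfrac12$, so $\Sigma_{G_r}-\tfrac12 I$ is positive semidefinite. If $\tfrac1n\sum_{i=1}^n|h_i|\le 1/M_1$ then $\sum_{i\in G_r}|h_i|\le n/(mM_1)$ for at least one $r$, and since $|G_r|\ge\lfloor n/m\rfloor\ge n/(2m)$ for $n\ge 2m$, this forces the average of $(|h_i|)_{i\in G_r}$ below $2/M_1$. A union bound over the $m$ classes thus reduces everything to the claim: if $(g_1,\dots,g_\ell)$ is centred Gaussian with unit variances and covariance $\Sigma_G$ with $\Sigma_G-\tfrac12 I$ positive semidefinite, then $\bP[\tfrac1\ell\sum_j|g_j|\le 2/M_1]\le e^{-c\ell}$ for some $c>0$.

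To prove that claim I would decouple and apply Chernoff. Write $(g_1,\dots,g_\ell)\overset{d}{=}\tfrac1{\sqrt2}Z+Y$, with $Z$ having i.i.d.\ $N(0,1)$ coordinates and $Y$ an independent centred Gaussian vector of covariance $\Sigma_G-\tfrac12 I$. Conditioning on $Y$ and using the elementary fact that for every $\mu>0$ the function $c\mapsto\bE[e^{-\mu|Z_1+c|}]$ is even and log-concave, hence maximised at $c=0$, we get for $\lambda>0$
$$
\bE\big[e^{-\lambda\sum_j|g_j|}\big]=\bE_Y\prod_{j=1}^\ell\bE_Z\big[e^{-\frac{\lambda}{\sqrt2}|Z_j+\sqrt2\,Y_j|}\big]\le\Big(\bE\big[e^{-\frac{\lambda}{\sqrt2}|Z_1|}\big]\Big)^\ell=:\psi(\lambda)^\ell.
$$
Markov's inequality then gives $\bP[\sum_j|g_j|\le t\ell]\le(e^{\lambda t}\psi(\lambda))^\ell$. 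Because $\psi(0)=1$ and $\psi'(0)=-\tfrac1{\sqrt2}\bE|Z_1|=-1/\sqrt\pi$, the quantity $e^{\lambda t}\psi(\lambda)$ has $\lambda$-derivative $t-1/\sqrt\pi$ at $\lambda=0$; so for any $t<1/\sqrt\pi$ there is $\lambda>0$ with $e^{\lambda t}\psi(\lambda)=:\rho<1$. Choosing $M_1>2\sqrt\pi$ makes $t:=2/M_1<1/\sqrt\pi$, and the claim holds with $c=-\log\rho>0$.

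Putting the pieces together, $\bP[\tfrac1n\sum_{i=1}^n|h_i|\le 1/M_1]\le m\,\rho^{\lfloor n/m\rfloor}$ for all $n\ge 2m$, and this is $\le e^{-n/M_2}$ once $M_2$ is taken sufficiently large in terms of $m$ and $\rho$; the finitely many remaining small $n$ are absorbed by enlarging $M_2$ further, using that $\bP[\tfrac1n\sum_i|h_i|\le 1/M_1]\le\bP[|h_1|\le n/M_1]<1$ and that this is bounded away from $1$ on any bounded range of $n$. All constants depend only on $d$ (through $m$ and $\rho$). I expect the decoupling to be the crux: the spreading of the indices is what buys the uniform bound $\Sigma_{G_r}-\tfrac12 I\succeq 0$ (and is the only place $\sum_k d_k<\infty$ enters), while the observation that $c\mapsto\bE[e^{-\mu|Z_1+c|}]$ peaks at $c=0$ is what lets the shift $Y$ be discarded after conditioning and restores the product structure — this is what converts a correlated small-ball probability into a genuine exponential bound.
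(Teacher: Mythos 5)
The paper never proves this lemma: it is quoted from Stolz (1996), Lemma 3.2, and the proof is explicitly omitted, so there is no in-paper argument to compare yours against. Judged on its own, your proof is essentially correct and self-contained: identifying $(h_1,\dots,h_n)$ with a centred Gaussian vector with covariance $\Sigma$, the residue-class blocking mod $m$, the row-sum (Gershgorin) bound giving $\Sigma_{G_r}-\tfrac12 I\succeq 0$, the decoupling $g=\tfrac1{\sqrt2}Z+Y$, the observation that $c\mapsto\bE[e^{-\mu|Z_1+c|}]$ is even and log-concave (hence maximal at $c=0$, so the shift $Y$ can be discarded after conditioning), and the Chernoff step using $\psi'(0)=-1/\sqrt{\pi}$ are all sound, and the scheme is very much in the spirit of the blocking/decoupling arguments used for such lemmas in the small-ball literature.

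Two caveats. First, your constants depend on the sequence $(d_k)$ through the cut-off $m$ (chosen so that $2\sum_{k\ge m}d_k\le\tfrac12$), not only on the number $d=\sum_k d_k$ as the statement literally asserts; knowing only the value of $d$ one cannot bound such an $m$ (all the mass may sit at one arbitrarily distant index). This is harmless for the paper's application, where the $d_k$ form an explicit sequence, but if one wants dependence on $d$ alone there is a shorter route that bypasses blocking altogether: the map $z\mapsto\tfrac1n\sum_i|(\Sigma^{1/2}z)_i|$ is Lipschitz with constant $\sqrt{\|\Sigma\|_{\mathrm{op}}/n}\le\sqrt{(1+2d)/n}$ by the row-sum bound, its mean is $\sqrt{2/\pi}$, and the Borell--TIS concentration inequality immediately yields $\bP\big[\tfrac1n\sum_i|h_i|\le\tfrac12\sqrt{2/\pi}\big]\le\exp(-cn/(1+2d))$. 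Second, your closing claim that $m\rho^{\lfloor n/m\rfloor}\le e^{-n/M_2}$ for all $n\ge 2m$ once $M_2$ is large can fail for $n$ of order $m$ when $m\rho^{2}>1$; the repair is exactly the mechanism you already invoke for small $n$ (on any bounded range of $n$ the probability is at most $\bP[|h_1|\le n/M_1]$, which is uniformly bounded away from $1$, so enlarge $M_2$), applied up to a threshold $N_0(m,\rho)$ rather than $2m$. Neither point affects the substance of the argument.
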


This next Lemma, often referred to as \v{S}id\'ak's Lemma, was proved independently in \cites{sidak1968multivariate} and \cite{ khatri1967certain}. 

\begin{lemma}
\label{Lemma:Sidak1}
Let $(E, \cH, i)$ be an abstract Wiener space with Gaussian measure $\cL$ and let $I$ be a countable index. Suppose $\forall j\in I$ that $h_j\in \cH$ and $\varepsilon_j>0$. Then for any $j\in I$
\begin{equation}
\label{eq:Lemma:Sidak1.1}
\cL\bigg[  \bigcap_{i\in I} \Big\{|h_i|<\varepsilon_i\Big\} \bigg] \geq \cL\bigg[ \Big\{ |h_j|<\varepsilon_j \Big\}\bigg] \cL\bigg[ \bigcap_{i\in I\backslash\{j\}} \Big\{ |h_i|<\varepsilon_i\Big\} \bigg]. 
\end{equation}
Equivalently, for $h'_k\in \cH$ such that $\| h_k\|_\cH = \| h'_k\|_\cH$ and $\forall j\in I\backslash \{k\}, \langle h_j, h'_k\rangle_\cH = 0$ then
\begin{equation}
\label{eq:Lemma:Sidak1.2}
\cL\bigg[  \bigcap_{j\in I} \Big\{|h_j|<\varepsilon_j\Big\} \bigg] \geq \cL\bigg[ \bigcap_{j\in I\backslash\{k\}} \Big\{ |h_j|<\varepsilon_j\Big\} \cap \Big\{ |h'_k|<\varepsilon_k\Big\} \bigg]. 
\end{equation}
\end{lemma}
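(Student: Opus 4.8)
The plan is to peel off the distinguished index and reduce the countable statement to the classical finite-dimensional \v{S}id\'ak inequality, which I then prove by conditioning together with Anderson's inequality. After discarding any $i$ with $\|h_i\|_\cH = 0$ (for which $\{|h_i|<\varepsilon_i\}$ equals $E$ up to a $\cL$-null set), I would first pass to finite index sets: if $I$ is infinite, enumerate $I=\{j=i_1,i_2,\dots\}$ with the distinguished index first and put $A_n=\bigcap_{k=1}^n\{|h_{i_k}|<\varepsilon_{i_k}\}$; then $A_n$ decreases to $\bigcap_{i\in I}\{|h_i|<\varepsilon_i\}$ and $\bigcap_{k=2}^n\{|h_{i_k}|<\varepsilon_{i_k}\}$ decreases to $\bigcap_{i\in I\backslash\{j\}}\{|h_i|<\varepsilon_i\}$, so continuity of $\cL$ from above reduces \eqref{eq:Lemma:Sidak1.1} to the case of finite $I$. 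For finite $I$, regarding each $h_i\in\cH$ as a centred Gaussian random variable on $(E,\cB(E),\cL)$, the family $(h_i)_{i\in I}$ is a centred Gaussian vector with covariance matrix $\big(\langle h_i,h_{i'}\rangle_\cH\big)_{i,i'\in I}$, and \eqref{eq:Lemma:Sidak1.1} is precisely the assertion that for a centred Gaussian vector $(\xi_0,\xi_1,\dots,\xi_n)$ and $\varepsilon_0,\dots,\varepsilon_n>0$,
\begin{equation*}
\bP\Big[\bigcap_{k=0}^n\{|\xi_k|<\varepsilon_k\}\Big]\ \geq\ \bP\big[|\xi_0|<\varepsilon_0\big]\cdot\bP\Big[\bigcap_{k=1}^n\{|\xi_k|<\varepsilon_k\}\Big].
\end{equation*}

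For this finite-dimensional inequality I would regress the remaining coordinates onto $\xi_0$: writing $\xi_k=\beta_k\xi_0+\zeta_k$ with $\beta_k=\bE[\xi_k\xi_0]/\bE[\xi_0^2]$, the residual vector $\zeta=(\zeta_1,\dots,\zeta_n)$ is centred Gaussian and independent of $\xi_0$. Conditioning on $\xi_0=u$ then gives
\begin{equation*}
\bP\Big[\bigcap_{k=0}^n\{|\xi_k|<\varepsilon_k\}\Big]=\int_{\{|u|<\varepsilon_0\}}\varphi_0(u)\,g(u)\,du,\qquad g(u):=\bP\big[\zeta\in K-ub\big],
\end{equation*}
where $\varphi_0$ is the density of $\xi_0$, $K=\prod_{k=1}^n(-\varepsilon_k,\varepsilon_k)$ is a symmetric box and $b=(\beta_1,\dots,\beta_n)$ is a fixed vector; likewise $\bP[\bigcap_{k=1}^n\{|\xi_k|<\varepsilon_k\}]=\int_\bR\varphi_0(u)\,g(u)\,du$. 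The key point is that, since $\zeta$ has a symmetric log-concave (hence unimodal) density, the classical Anderson inequality in its monotone one-parameter form shows $u\mapsto g(u)$ is symmetric in $u$ and non-increasing in $|u|$. Hence $u\mapsto\1_{\{|u|<\varepsilon_0\}}$ and $u\mapsto g(u)$ are both non-increasing functions of $|u|$, so they are similarly ordered and $\int\!\!\int\big(\1_{\{|u|<\varepsilon_0\}}-\1_{\{|v|<\varepsilon_0\}}\big)\big(g(u)-g(v)\big)\varphi_0(u)\varphi_0(v)\,du\,dv\geq0$; expanding this Chebyshev-type double integral yields $\int_{\{|u|<\varepsilon_0\}}\varphi_0\,g\geq\big(\int_{\{|u|<\varepsilon_0\}}\varphi_0\big)\big(\int_\bR\varphi_0\,g\big)$, which is exactly the inequality above.

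The reformulation \eqref{eq:Lemma:Sidak1.2} follows at once: if $h_k'\in\cH$ satisfies $\|h_k'\|_\cH=\|h_k\|_\cH$ and $\langle h_j,h_k'\rangle_\cH=0$ for all $j\in I\backslash\{k\}$, then $h_k'$ is independent of $(h_j)_{j\in I\backslash\{k\}}$ and equidistributed with $h_k$, so the right-hand side of \eqref{eq:Lemma:Sidak1.2} factors as $\cL[\{|h_k'|<\varepsilon_k\}]\cdot\cL[\bigcap_{j\in I\backslash\{k\}}\{|h_j|<\varepsilon_j\}]=\cL[\{|h_k|<\varepsilon_k\}]\cdot\cL[\bigcap_{j\in I\backslash\{k\}}\{|h_j|<\varepsilon_j\}]$, turning \eqref{eq:Lemma:Sidak1.2} into \eqref{eq:Lemma:Sidak1.1} with $j=k$.

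The hard part will be the monotonicity of $g$: one has to recognise the conditional event $\bigcap_{k=1}^n\{|\beta_k u+\zeta_k|<\varepsilon_k\}$ as the preimage of a \emph{fixed} symmetric convex body translated along a \emph{fixed} direction proportional to the conditioning value $u$, and then invoke the correct (one-parameter, monotone) form of Anderson's inequality for the log-concave law of the residual vector. The finite-dimensional reduction, the regression decomposition, and the concluding one-dimensional correlation estimate are all routine.
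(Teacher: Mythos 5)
Your argument is correct. Note first that the paper does not prove this lemma at all: it attributes it to \v{S}id\'ak and Khatri and points to \cite{bogachev1998gaussian}*{Theorem 4.10.3} for a proof, so there is no in-paper argument to compare against. What you have written is essentially the classical Khatri--\v{S}id\'ak proof that such references contain: reduce to finite $I$ by continuity from above, regress the remaining coordinates on the distinguished one so that the conditional probability given $\xi_0=u$ becomes $g(u)=\bP[\zeta\in K-ub]$ with $K$ a fixed symmetric convex box, invoke Anderson's inequality to get that $g$ is even and non-increasing in $|u|$, and finish with the Chebyshev association inequality for two functions that are non-increasing in $|u|$; the deduction of \eqref{eq:Lemma:Sidak1.2} from \eqref{eq:Lemma:Sidak1.1} via the independence and equidistribution of $h_k'$ is also right. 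The only point to tighten is your appeal to a ``symmetric log-concave density'' for the residual vector $\zeta$: its covariance can be singular (e.g.\ if some $h_i$ is a multiple of $h_j$), in which case $\zeta$ has no density on $\bR^n$. This is harmless --- either apply Anderson's inequality in its general form for centred Gaussian measures, or note that $u\mapsto\bP[\zeta\in K-ub]$ is even and log-concave by Pr\'ekopa--Leindler (degenerate Gaussians are log-concave measures), hence non-increasing in $|u|$, or perturb by an independent $\epsilon$-Gaussian and let $\epsilon\to0$ --- but the statement as written should be justified by one of these routes rather than by assuming a density exists.
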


For an eloquent proof, see \cite{bogachev1998gaussian}*{Theorem 4.10.3}. In particular, given a Gaussian process $W$, a countable collection of intervals $(s_j, t_j)_{j\in I}$ and bounds $(\varepsilon_j)_{j\in I}$, we have
\begin{align*}
\bP\bigg[ \bigcap_{j\in I} \Big\{ | W_{s_j, t_j}|<\varepsilon_j \Big\} \bigg] \geq& \bP\bigg[ \Big\{ | W_{s_1, t_1}|<\varepsilon_1 \Big\} \bigg] \cdot \bP\bigg[ \bigcap_{\substack{j\in I\\ j\neq 1}} \Big\{ | W_{s_j, t_j}|<\varepsilon_j \Big\} \bigg] 
\\
\geq& \prod_{j\in I} \bP\bigg[ \Big\{ | W_{s_j, t_j}|<\varepsilon_j \Big\} \bigg]. 
\end{align*}
Thus the probability of a sequence of intervals of a Gaussian process sitting on slices is minimised when the Gaussian random variables are all independent. 

This is an example of the now proved Gaussian corellation conjecture (first proved in \cite{royen2014simple}) which states
\begin{equation}
\label{eq:GaussCorrConj}
\bP\bigg[ \bigcap_{j\in I} \Big\{ | W_{s_j, t_j}|<\varepsilon_j \Big\} \bigg] \geq \bP\bigg[ \bigcap_{j\in I_1} \Big\{ | W_{s_j, t_j}|<\varepsilon_j \Big\} \bigg] \cdot \bP\bigg[ \bigcap_{j\in I_2} \Big\{ | W_{s_j, t_j}|<\varepsilon_j \Big\} \bigg] 
\end{equation}
where $I_1 \cup I_2 = I$ and $I_1 \cap I_2 =\emptyset$. 

Given a pair of abstract Wiener spaces $(E_1, \cH_1, i_1)$ and $(E_2, \cH_2, i_2)$, we can define a Gaussian measure on the Cartesian product $E_1 \oplus E_2$ which has RKHS $\cH_1 \oplus \cH_2$ by taking the product measure $\cL_1 \times \cL_2$ over $(E_1 \oplus E_2, \cB(E_1) \otimes \cB(E_2)) $. 

We define the tensor space $E_1 \otimes_\varepsilon E_2$ of $E_1$ and $E_2$ to be the closure of the algebraic tensor $E_1 \otimes E_2$ with respect to the injective tensor norm
$$
\varepsilon(x):= \sup\Big\{ | (f \otimes g)(x)|: f\in E_1^*, g\in E_2^*, \|f\|_{E_1^*} = \| g\|_{E_2^*} = 1\Big\}. 
$$
Let $f\in (E_1 \otimes_\varepsilon E_2)^*$. Then the map $E_1 \oplus E_2 \ni (x, y) \mapsto f(x\otimes y)$ is measurable and the pushforward of $f$ with respect to the Gaussian measure is an element of the second Wiener It\^o chaos.  In the case where the tensor product is of two Hilbert spaces, there is no question over the choice of the norm for $\cH_1 \otimes \cH_2$. 

A problem similar to this was first studied in \cite{ledoux2002large}. We emphasise that our result is a lot more general. 
\begin{lemma}
\label{Lemma:Sidak2}
Let $(E_1, \cH_1, i_1)$ and $(E_2, \cH_2, i_2)$ be abstract Wiener spaces with Gaussian measures $\cL_1$ and $\cL_2$. Let $\cL_1 \times \cL_2$ be the product measure over the direct sum $E_1 \oplus E_2$. Let $I_1, I_2,  I_3$ be countable indexes. Suppose that $\forall j\in I_1, h_{j,1} \in \cH_1$ and $\varepsilon_{j,1}>0$, $\forall j\in I_2, h_{j,2} \in \cH_2$ and $\varepsilon_{j, 2}>0$, and $\forall j\in I_3, h_{j,3} \in \cH_1 \otimes \cH_2$ and $\varepsilon_{j, 3}>0$. Additionally, denote $\hat{\otimes} :E_1 \oplus E_2 \to E_1 \otimes_\varepsilon E_2$ by $\hat{\otimes}(x, y) = x\otimes y$. Then
\begin{align*}
(\cL_1 \times \cL_2)&\bigg[ \bigcap_{j\in I_1} \Big\{ |h_{j,1}|<\varepsilon_{j, 1}\Big\} \bigcap_{j\in I_2} \Big\{ |h_{j, 2}|< \varepsilon_{j, 2}\Big\} \bigcap_{j\in I_3} \Big\{ |h_{j, 3}(\hat{\otimes})|<\varepsilon_{j,3}\Big\} \bigg] 
\\
\geq &\prod_{j \in I_1} \cL_1\bigg[ \Big\{ |h_{j, 1}|<\varepsilon_{j, 1}\Big\}\bigg] \cdot \prod_{j\in I_2} \cL_2\bigg[\Big\{ |h_{j, 2}|<\varepsilon_{j, 2}\Big\}\bigg] \cdot \prod_{j \in I_3} (\cL_1\times \cL_2)\bigg[ \Big\{ |h_{j,3}(\hat{\otimes})|<\varepsilon_{j, 3}\Big\}\bigg] 
\end{align*}
\end{lemma}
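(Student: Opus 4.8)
The plan is to prove the inequality by conditioning on the first coordinate $x\in E_1$. The point is that conditioning turns each second‑chaos constraint $\{|h_{j,3}(\hat{\otimes})|<\varepsilon_{j,3}\}$ into a \emph{linear} constraint in the variable $y\in E_2$ (to which \v{S}id\'ak's Lemma \ref{Lemma:Sidak1} applies) and, simultaneously, into a \emph{symmetric convex} constraint in the variable $x\in E_1$ (to which the Gaussian correlation inequality applies). Since a countable intersection of these sets is the decreasing limit of its finite sub‑intersections and every factor on the right‑hand side lies in $[0,1]$, by continuity of measure it suffices to treat the case of finite $I_1,I_2,I_3$ and pass to the limit.

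Fix $h\in\cH_1\otimes\cH_2$ with singular value decomposition $h=\sum_k\lambda_k\,u_k\otimes v_k$, where $(u_k)$ is orthonormal in $\cH_1$, $(v_k)$ orthonormal in $\cH_2$, $\lambda_k\geq 0$ and $\sum_k\lambda_k^2=\|h\|^2$. Identifying $u_k,v_k$ with the associated centred Gaussian functionals, $h(\hat{\otimes})(x,y)=\sum_k\lambda_k u_k(x)v_k(y)$, the series converging in $L^2(\cL_1\times\cL_2)$. Because $\bE_{\cL_1}\big[\sum_k\lambda_k^2 u_k(x)^2\big]=\sum_k\lambda_k^2<\infty$, for $\cL_1$‑a.e.\ $x$ the element $h(x\otimes\cdot):=\sum_k\lambda_k u_k(x)v_k$ converges in $\cH_2$, its Gaussian functional coincides $\cL_2$‑a.s.\ with $y\mapsto h(\hat{\otimes})(x,y)$ (by $L^2$‑convergence and Fubini), and it has variance $\|h(x\otimes\cdot)\|_{\cH_2}^2=\sum_k\lambda_k^2 u_k(x)^2$. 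Hence, for each $j\in I_3$, $g_j(x):=\cL_2\big[\{|h_{j,3}(x\otimes\cdot)|<\varepsilon_{j,3}\}\big]=\psi_j\big(\|h_{j,3}(x\otimes\cdot)\|_{\cH_2}^2\big)$ where $\psi_j(t)=\erf\big(\varepsilon_{j,3}/\sqrt{2t}\,\big)$ for $t>0$ and $\psi_j(0)=1$ is continuous and strictly decreasing, while $x\mapsto\|h_{j,3}(x\otimes\cdot)\|_{\cH_2}^2$ is a nonnegative, even, convex, measurable functional on $E_1$. Consequently the superlevel sets $C_j(s):=\{x\in E_1:g_j(x)>s\}$ are symmetric convex for every $s\in(0,1)$, one has the layer‑cake identity $g_j(x)=\int_0^1\1_{C_j(s)}(x)\,ds$, and by Fubini $\int_{E_1}g_j\,d\cL_1=(\cL_1\times\cL_2)\big[\{|h_{j,3}(\hat{\otimes})|<\varepsilon_{j,3}\}\big]$.

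Writing $A$ for the event on the left‑hand side, Fubini's theorem gives
\[
(\cL_1\times\cL_2)[A]=\int_{E_1}\1_{\bigcap_{j\in I_1}\{|h_{j,1}|<\varepsilon_{j,1}\}}(x)\,\cL_2\Big[\bigcap_{j\in I_2}\{|h_{j,2}|<\varepsilon_{j,2}\}\cap\bigcap_{j\in I_3}\{|h_{j,3}(x\otimes\cdot)|<\varepsilon_{j,3}\}\Big]d\cL_1(x).
\]
For $\cL_1$‑a.e.\ $x$ all functionals inside the inner probability lie in $\cH_2$, so \v{S}id\'ak's Lemma \ref{Lemma:Sidak1}, iterated, bounds it below by $\prod_{j\in I_2}\cL_2[\{|h_{j,2}|<\varepsilon_{j,2}\}]\cdot\prod_{j\in I_3}g_j(x)$. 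Inserting this and writing $\prod_{j\in I_3}g_j(x)=\int_{[0,1]^{I_3}}\1_{\bigcap_{j\in I_3}C_j(s_j)}(x)\,ds$,
\[
(\cL_1\times\cL_2)[A]\geq\prod_{j\in I_2}\cL_2[\{|h_{j,2}|<\varepsilon_{j,2}\}]\int_{[0,1]^{I_3}}\cL_1\Big[\bigcap_{j\in I_1}\{|h_{j,1}|<\varepsilon_{j,1}\}\cap\bigcap_{j\in I_3}C_j(s_j)\Big]ds.
\]
Each $\{|h_{j,1}|<\varepsilon_{j,1}\}$ and each $C_j(s_j)$ is a symmetric convex set, so iterating the Gaussian correlation inequality for symmetric convex sets (\cite{royen2014simple}, see also \cite{latala2017royen}) bounds the integrand below by $\prod_{j\in I_1}\cL_1[\{|h_{j,1}|<\varepsilon_{j,1}\}]\cdot\prod_{j\in I_3}\cL_1[C_j(s_j)]$. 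Integrating over $s\in[0,1]^{I_3}$ and using once more the layer‑cake formula and Fubini, namely $\int_0^1\cL_1[C_j(s)]\,ds=\int_{E_1}g_j\,d\cL_1=(\cL_1\times\cL_2)[\{|h_{j,3}(\hat{\otimes})|<\varepsilon_{j,3}\}]$, yields exactly the asserted bound in the finite case; the countable case then follows by the reduction of the first paragraph.

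The main obstacle, and the conceptual crux, is precisely this double role of conditioning on one coordinate: it linearises the bilinear chaos constraints in the other variable, so that \v{S}id\'ak applies there, and at the same time realises them as small‑ball events for an $\cH_2$‑valued Gaussian whose variance is a convex, even quadratic form in the conditioned variable, so that the superlevel sets $C_j(s)$ are symmetric convex and Royen's inequality applies there. The remaining ingredients — the singular value decomposition in $\cH_1\otimes\cH_2$, the $\cL_1$‑a.s.\ identification of the restricted functional $h_{j,3}(\hat{\otimes})(x,\cdot)$ with an element of $\cH_2$, the layer‑cake representation, and the Fubini bookkeeping — are routine but must be assembled with some care.
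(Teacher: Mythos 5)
Your argument is correct (modulo the routine measure-theoretic care you flag yourself: choosing linear versions of the $\cH_1$-functionals so that the superlevel sets $C_j(s)$ are genuinely symmetric and convex, and the $\cL_1$-a.e.\ identification of $h_{j,3}(\hat\otimes)(x,\cdot)$ with the Gaussian functional of $h_{j,3}(x\otimes\cdot)\in\cH_2$), but it takes a genuinely different route from the paper in the $E_1$-step. The paper also conditions via Fubini and treats the inner variable with \v{S}id\'ak's Lemma, but it then handles the outer variable by a second application of \v{S}id\'ak in the orthogonalisation form \eqref{eq:Lemma:Sidak1.2}: for each fixed $y$ the family $\{h_{j,1}\}_{j\in I_1}\cup\{h_{j,3}(\cdot\otimes y)\}_{j\in I_3}$ is replaced by mutually orthogonal elements of $\cH_1$ of the same norms, the order of integration is swapped, and the same replacement is performed in $\cH_2$, after which the product bound follows. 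You instead convert the conditional probabilities $g_j(x)=\erf\big(\varepsilon_{j,3}/(\sqrt{2}\,\|h_{j,3}(x\otimes\cdot)\|_{\cH_2})\big)$ into layer-cake integrals over the symmetric convex sets $C_j(s)$ and invoke Royen's Gaussian correlation inequality \eqref{eq:GaussCorrConj} in $E_1$, recovering the exact factors $(\cL_1\times\cL_2)[\{|h_{j,3}(\hat\otimes)|<\varepsilon_{j,3}\}]$ by Fubini. The trade-off: the paper's proof stays entirely within the weaker \v{S}id\'ak lemma, at the cost of the somewhat delicate bookkeeping of the $y$-dependent orthogonal replacements $h'_{j,3}(\cdot\otimes y)$; your proof leans on the stronger (but now proven, and cited in the paper) correlation inequality, which lets you avoid any orthogonalisation and handle the $x$-dependence purely through convexity of the conditional variance $x\mapsto\|h_{j,3}(x\otimes\cdot)\|_{\cH_2}^2$ — the GCI is needed here because the sets $C_j(s)$ are ellipsoidal rather than slabs, so \v{S}id\'ak alone would not correlate them with each other. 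Your explicit reduction to finite index sets and passage to the limit is a clean addition that the paper leaves implicit.
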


\begin{proof}
When $I_3=\emptyset$, Lemma \ref{Lemma:Sidak2} comes immediately by applying Lemma \ref{Lemma:Sidak1}. When $I_3\neq \emptyset$, the bilinear forms $f_{j, 3}(\hat{\otimes})$ are not bounded on $E_1 \oplus E_2$ and so we cannot immediately apply Lemma \ref{Lemma:Sidak1} (they are bounded on the space $E_1 \otimes_\varepsilon E_2$). 

However, we do have that for $y\in E_2$ fixed, the functional $x\mapsto h( x\otimes y)$ is a linear functional and for $x\in E_1$ fixed, the functional $y\mapsto h( x\otimes y)$ is a linear functional (not necessarily bounded functionals). Thus, by the definition of the product measure, we have
\begin{align*}
(\cL_1 \times \cL_2)&\bigg[ \bigcap_{j\in I_1} \Big\{ |h_{j,1}|<\varepsilon_{j, 1}\Big\} \bigcap_{j\in I_2} \Big\{ |h_{j, 2}|< \varepsilon_{j, 2}\Big\} \bigcap_{j\in I_3} \Big\{ |h_{j, 3}(\hat{\otimes})|<\varepsilon_{j,3}\Big\} \bigg] 
\\
=&\int_{E_2}\int_{E_1} \prod_{j\in I_1} \1_{\{|h_{j,1}|<\varepsilon_{j, 1} \}}(x) \cdot \prod_{j\in I_2} \1_{\{|h_{j,2}|<\varepsilon_{j, 2}\}}(y) \cdot \prod_{j\in I_3} \1_{\{|h_{j,3}(\hat{\otimes })|<\varepsilon_{j,3}\}}(x,y) d\cL_1(x) d\cL_2(y)
\\
\geq&\int_{E_2}\prod_{j\in I_2} \1_{\{|h_{j,2}|<\varepsilon_{j, 2}\}}(y) \cdot \int_{E_1} \prod_{j\in I_1} \1_{\{|h'_{j,1}|<\varepsilon_{j, 1} \}}(x) \cdot \prod_{j\in I_3} \1_{\{|h'_{j,3}(\hat{\otimes })|<\varepsilon_{j,3}\}}(x,y) d\cL_1(x) d\cL_2(y)
\end{align*}
where for each $j\in I_1$ $\|h'_{j, 1}\|_\cH = \| h_{j, 1}\|_\cH$, for each $j\in I_3$ and $y\in E_2$ fixed $\|h'_{j,3}(\cdot \otimes y)\|_\cH = \|h_{j,3}(\cdot \otimes y)\|_\cH$, and the vectors $\{h'_{j, 1}\}_{j\in I_1}\cup \{h'_{j, 3}(\cdot \otimes y)\}_{j\in I_3}$ are orthonormal in $\cH$. This comes from applying Equation \eqref{eq:Lemma:Sidak1.2} from Lemma \ref{Lemma:Sidak1}. 

Similarly, swapping the order of integration and repeating yields
\begin{align*}
\geq&\int_{E_1}\prod_{j\in I_1} \1_{\{|h'_{j,1}|<\varepsilon_{j, 1} \}}(x) \cdot  \int_{E_2} \prod_{j\in I_2} \1_{\{|h'_{j,2}|<\varepsilon_{j, 2}\}}(y) \cdot \prod_{j\in I_3} \1_{\{|h''_{j,3}(\hat{\otimes })|<\varepsilon_{j,3}\}}(x,y) d\cL_2(y) d\cL_1(x) 
\\
\geq& \prod_{j \in I_1} \cL_1\bigg[ \Big\{ |h'_{j, 1}|<\varepsilon_{j, 1}\Big\}\bigg] \cdot \prod_{j\in I_2} \cL_2\bigg[\Big\{ |h'_{j, 2}|<\varepsilon_{j, 2}\Big\}\bigg] \cdot \prod_{j \in I_3} (\cL_1\times \cL_2)\bigg[ \Big\{ |h''_{j,3}(\hat{\otimes})|<\varepsilon_{j, 3}\Big\}\bigg] 
\end{align*}
where for each $j\in I_2$ $\|h'_{j,2}\|_\cH = \|h_{j,2}\|_\cH$, for each $j\in I_3$ and $x\in E_1$ fixed $\| h'_{j, 3}(x \otimes \cdot)\|_\cH = \| h''_{j,3}(x \otimes \cdot)\|_\cH$, and the vectors $\{ h'_{j, 2}\}_{j\in I_2} \cup \{h''_{j,3}(x \otimes \cdot)\}_{j\in I_3}$ are orthonormal in $\cH$. 
\end{proof}

In fact, rather than dividing this intersection of sets into a product of probabilities completely (as will be necessary later in this paper), we could have used Equation \eqref{eq:GaussCorrConj} to divide the intersection into the product of any number of two intersections. We do not state this to avoid writing already challenging notation and because there is no need for such a result in Section \ref{section:SmallBallProbab}. 

\begin{proposition}[\v{S}id\'ak's Lemma for higher order Wiener-It\^o chaos elements.]
\label{pro:Sidak}
Let $m$ be a positive integer. Let $(E_1, \cH_1, i_1)$, ..., $(E_m, \cH_m, i_m)$ be $m$ abstract Wiener spaces with Gaussian measures $\cL_1$, ..., $\cL_m$. Let $\cL_1 \times ... \times \cL_m$ be the product measure over the direct sum $E_1 \oplus ... \oplus E_m$. Let $I_1$, $I_2$, ..., $I_m$ be $m$ countable indexes. Suppose that for $l\in\{ 1, ..., m\}$, $\forall j\in I_l$
$$
h_{j, l} \in \bigcup_{\substack{k_1, ..., k_l\\ k_1 \neq ... \neq k_l}} \cH_{k_1} \otimes ... \otimes \cH_{k_l}, \quad \varepsilon_{j, l}>0. 
$$
Next, suppose
$$
\hat{\otimes}_l:E_{k_1} \oplus ... \oplus E_{k_l} \to E_{k_1} \otimes_\varepsilon ... \otimes_\varepsilon E_{k_l}, \quad \hat{\otimes}(x_{k_1}, ..., x_{k_l}) := x_{k_1} \otimes ... \otimes x_{k_l}
$$
Then
\begin{align*}
\Big(\cL_1 \times ... \times \cL_m\Big)\Bigg[ \bigcap_{l=1}^m \bigcap_{j\in I_l} \Big\{ |h_{j, l}(\hat{\otimes}_l)|<\varepsilon_{j, l}\Big\} \Bigg] \geq \prod_{l=1}^m\prod_{j\in I_l} \Big( \cL_1 \times ... \times \cL_m\Big) \Bigg[ \Big\{ |h_{j, l}(\hat{\otimes}_l)|<\varepsilon_{j, l}\Big\} \Bigg]
\end{align*}
\end{proposition}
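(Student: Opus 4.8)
The plan is to run an $m$-fold version of the argument used for the case $m=2$ in Lemma~\ref{Lemma:Sidak2}. First I would reduce to finite index sets: letting finite subsets exhaust each $I_l$, the left-hand side converges to its value on the full index by continuity of the probability measure and the right-hand side converges by monotone convergence for products, so it suffices to prove the inequality when $I_1,\dots,I_m$ are all finite. With that reduction the case $m=1$ is exactly Lemma~\ref{Lemma:Sidak1}: iterating \eqref{eq:Lemma:Sidak1.1} peels $\bigcap_{j\in I_1}\{|h_{j,1}|<\varepsilon_{j,1}\}$ into $\prod_{j\in I_1}\cL_1[\{|h_{j,1}|<\varepsilon_{j,1}\}]$.

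For $m\ge 2$, use Fubini (all integrands are non-negative, and the measurability of the conditionally-defined functionals below is routine to check) to write $(\cL_1\times\cdots\times\cL_m)\big[\bigcap_{l}\bigcap_{j\in I_l}\{|h_{j,l}(\hat{\otimes}_l)|<\varepsilon_{j,l}\}\big]$ as an $m$-fold iterated integral in the variables $x_m,x_{m-1},\dots,x_1$, and then process the coordinates starting from the innermost, $E_1$, outwards. When it is the turn of coordinate $l$, condition on all the other $x_{l'}$: a functional $h_{j,l'}\in\cH_{k_1}\otimes\cdots\otimes\cH_{k_{l'}}$ that does not involve index $l$ has become a constant and is pulled out of the $E_l$-integral, while one that does involve $l$ restricts to a genuine Gaussian linear functional on $E_l$, namely the element of $\cH_l$ obtained by contracting the tensor $h_{j,l'}$ against the frozen coordinates. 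All of these restricted functionals are jointly Gaussian under $\cL_l$, so \eqref{eq:Lemma:Sidak1.2} lets us replace the whole collection (coordinate-by-coordinate) by an orthonormal, suitably rescaled family in $\cH_l$: this only decreases the $E_l$-integral, and since the rescaling preserves $\cH_l$-norms it leaves every one-dimensional marginal law untouched. I would orthonormalise the purely-$\cH_l$ functionals among themselves first, so that they become independent of everything else. Critically, one does \emph{not} carry out the $E_l$-integration at this stage; it is kept as an integral sitting inside the remaining $m-1$ layers. After all $m$ coordinates have been treated, each layer's family is orthonormal and hence the layer factorises exactly; collecting the factors and invoking the norm-preservation once more produces $\prod_{l=1}^m\prod_{j\in I_l}(\cL_1\times\cdots\times\cL_m)[\{|h_{j,l}(\hat{\otimes}_l)|<\varepsilon_{j,l}\}]$, which is the desired bound.

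The step I expect to be the crux is the mutual consistency of the $m$ successive conditional orthonormalisations. Rescaling a tensor functional in one slot, conditionally on the other slots, produces a function of those other coordinates that is no longer multilinear, so it is not automatic that it still restricts to an honest element of $\cH_{k_2}$ once coordinate $k_2$ comes up for processing; making the scheme run requires fixing an order on the slots and arranging that the orthonormalisation at stage $l$ touches only the slot corresponding to coordinate $l$ in each tensor, so that the Gaussian-linear structure needed for the later slots is not destroyed. For $m=2$ this is what makes the two-step argument in the proof of Lemma~\ref{Lemma:Sidak2} go through, and one must verify it survives $m$ steps. An equivalent organisation, possibly cleaner to write, is to strengthen the induction hypothesis on $m$ so that the integrand is allowed to carry, besides the chaos-slab indicators, extra factors of the form $x\mapsto\cL_l[\{x_l:|g(x,x_l)|<\varepsilon\}]$ left behind by a previously-peeled coordinate, and then to show that such ``annealed slab'' factors are positively associated, under the relevant Gaussian product measure, with the remaining chaos-slab indicators. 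The Fubini manipulations, the measurability bookkeeping, and the finite-to-countable passage are the only remaining ingredients, and all of these are routine.
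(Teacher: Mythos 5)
Your proposal takes essentially the same route as the paper: the paper's entire proof of Proposition \ref{pro:Sidak} is the single sentence that it follows from extensive applications of the methods of Lemma \ref{Lemma:Sidak2} and Equation \eqref{eq:Lemma:Sidak1.2}, i.e.\ precisely the coordinate-by-coordinate Fubini-plus-\v{S}id\'ak replacement scheme you describe. Your write-up is in fact more detailed than the paper's, since you also spell out the finite-index reduction and flag the consistency of the successive conditional orthonormalisations, a point the paper leaves implicit.
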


\begin{proof}
Extensive applications of the methods of Lemma \ref{Lemma:Sidak2} and Equation \eqref{eq:Lemma:Sidak1.2}. 
\end{proof}

\section{Small ball probabilities for enhanced Gaussian processes}
\label{section:SmallBallProbab}

The first result, the main endeavour of this paper, demonstrates that for Gaussian rough paths the SBPs cannot converge ``faster'' than a function of the regularity of the covariance. 

\begin{theorem}
\label{Thm:SmallBallProbab}
Let $\cL^W$ be a Gaussian measure satisfying Assumption \ref{assumption:GaussianRegularity} for some $\varrho\in[1, 3/2)$ and let $\rw$ be the lifted Gaussian rough path. Then for $\tfrac{1}{3} <\alpha<\tfrac{1}{2\varrho}$ we have
\begin{equation}
\label{eq:ThmSmallBallProbab}
\fB(\varepsilon):=-\log\Big( \bP\Big[ \| \rw \|_\alpha <\varepsilon\Big] \Big) \lesssim \varepsilon^{\frac{-1}{\tfrac{1}{2\varrho} - \alpha}}. 
\end{equation}
\end{theorem}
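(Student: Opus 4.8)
The plan is to bound $\bP\big[\|\rw\|_\alpha<\varepsilon\big]$ from below by discretising the H\"older norm, splitting each dyadic increment of $\rw$ into its first level and its second-chaos area components, using the \v{S}id\'ak-type correlation inequalities of Section~\ref{section:EnGaussInequal} to replace the probability of the resulting countable intersection by a product of one-dimensional probabilities, and finally estimating and summing those probabilities over dyadic scales.

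First, by Lemma~\ref{lemma:DiscretisationNorm} --- the dyadic characterisation of the H\"older norm, which for rough paths follows from subadditivity of $\|\cdot\|_{cc}$ under $\boxtimes$ together with Chen's relation, telescoping an arbitrary increment $\rw_{s,t}$ through consecutive dyadic pieces --- one has, with $t^n_k:=kT2^{-n}$,
$$
\|\rw\|_\alpha \ \le\ C_\alpha\,\sup_{n\ge 0}\ \sup_{0\le k<2^n}\ 2^{n\alpha}\,\big\|\rw_{t^n_k,\,t^n_{k+1}}\big\|_{cc}.
$$
Setting $\delta_n:=\varepsilon\,2^{-n\alpha}/C_\alpha$, passing to an equivalent homogeneous norm~\eqref{eq:HomoNorm}, and using~\eqref{eq:def:rough-paths2} (the diagonal second-level coordinates of a weak geometric rough path are determined by the first level), we obtain, for suitable constants $c_1,c_2>0$,
$$
\bP\big[\|\rw\|_\alpha<\varepsilon\big] \ \ge\ \bP\Bigg[\ \bigcap_{n\ge0}\ \bigcap_{k}\ \Big( \bigcap_{i}\big\{\,|W^i_{t^n_k,t^n_{k+1}}|<c_1\delta_n\,\big\}\ \cap\ \bigcap_{i\neq j}\big\{\,|\mathbb{A}^{i,j}_{t^n_k,t^n_{k+1}}|<c_2\delta_n^2\,\big\}\Big)\Bigg],
$$
where $\mathbb{A}^{i,j}_{s,t}:=\langle\rw_{s,t},e_{i,j}\rangle=\int_s^t W^i_{s,r}\,dW^j_r$. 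Since $\cL^W$ has independent components, $W^i_{s,t}$ is a first Wiener--It\^o chaos element of the $i$-th factor space $\cH^{(i)}$ and, for $i\neq j$, $\mathbb{A}^{i,j}_{s,t}$ is a second-chaos element of $\cH^{(i)}\otimes\cH^{(j)}$; this is exactly the setting of Proposition~\ref{pro:Sidak} (with $m=d$, all indices distinct), so the last display is bounded below by the product, over all dyadic intervals, of the individual probabilities of the coordinate events.

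It remains to estimate and sum those factors; write $\gamma:=\tfrac{1}{2\varrho}-\alpha>0$. For a first-level event, $W^i_{s,t}$ is a centred scalar Gaussian with variance $\le M|t-s|^{1/\varrho}$ by Assumption~\ref{assumption:GaussianRegularity}, and since $\bP[\,|N(0,\sigma^2)|<\delta\,]$ decreases in $\sigma$, $-\log\bP[\,|W^i_{s,t}|<\delta\,]\lesssim\log_+\!\big(|t-s|^{1/(2\varrho)}/\delta\big)$ for $\delta\le|t-s|^{1/(2\varrho)}$, and $\lesssim\big(\delta|t-s|^{-1/(2\varrho)}\big)^{-1}\exp\!\big(-\tfrac12(\delta|t-s|^{-1/(2\varrho)})^2\big)$ otherwise. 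For an area event, the standard two-dimensional Young estimate (available since $\varrho<3/2<2$) gives $\|\mathbb{A}^{i,j}_{s,t}\|_{L^2}\lesssim\|\cR^W\|_{\varrho\text{-var};[s,t]^{\times2}}\lesssim M|t-s|^{1/\varrho}$, and combined with a small-ball bound for second-chaos variables $-\log\bP[\,|Q|<\eta\,]\lesssim\log_+(\|Q\|_{L^2}/\eta)+1$ and the sub-exponential tail $\bP[\,|Q|\ge\lambda\|Q\|_{L^2}\,]\lesssim e^{-c\lambda}$ from hypercontractivity, $-\log\bP[\,|\mathbb{A}^{i,j}_{s,t}|<\eta\,]\lesssim\log_+\!\big(|t-s|^{1/\varrho}/\eta\big)+1$ for $\eta\le|t-s|^{1/\varrho}$ and $\lesssim\exp\!\big(-c\,\eta|t-s|^{-1/\varrho}\big)$ otherwise. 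On a level-$n$ dyadic interval, with $\delta=c_1\delta_n$ and $\eta=c_2\delta_n^2$, both $|t-s|^{1/(2\varrho)}/\delta$ and $(|t-s|^{1/\varrho}/\eta)^{1/2}$ are of order $(\varepsilon\,2^{n\gamma})^{-1}$, so each of the $\asymp 2^n$ factors at level $n$ contributes $\lesssim\log_+\!\big((\varepsilon\,2^{n\gamma})^{-1}\big)+1$ when $n\le N_\varepsilon:=\gamma^{-1}\log_2(1/\varepsilon)$ and a super-polynomially small amount when $n>N_\varepsilon$. Writing $\log(1/\varepsilon)-n\gamma\log 2=(N_\varepsilon-n)\gamma\log 2$, and $2^n=2^{N_\varepsilon}2^{\,n-N_\varepsilon}$ with $\varepsilon\,2^{n\gamma}=2^{(n-N_\varepsilon)\gamma}$,
$$
\fB(\varepsilon)\ \lesssim\ \sum_{n\le N_\varepsilon}2^n\big((N_\varepsilon-n)\gamma\log 2+O(1)\big)\ +\ 2^{N_\varepsilon}\sum_{j\ge1}2^{j}\,e^{-c\,2^{2j\gamma}}\ \lesssim\ 2^{N_\varepsilon}\ =\ \varepsilon^{-1/\gamma},
$$
the first sum being $\lesssim 2^{N_\varepsilon}$ since $\sum_{m\ge0}m2^{-m}<\infty$ and its summand concentrates near $n=N_\varepsilon$, and the geometric-type series in the second term converging. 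This is~\eqref{eq:ThmSmallBallProbab}.

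The delicate point is the small-ball estimate for the second-chaos area increments \emph{with a constant uniform in the dyadic interval}: for a fixed second-chaos variable $Q$ one has $-\log\bP[\,|Q|<\eta\,]\lesssim\log_+(\|Q\|_{L^2}/\eta)$, but the implied constant a priori depends on the law of $Q$ (on how the spectrum of the underlying quadratic form degenerates), and without uniformity the $O(1)$ terms would not sum against $2^n$ for $n\le N_\varepsilon$; the sub-exponential tail constant is likewise needed uniformly. I would obtain this through hypercontractive control of the normalised increments $\mathbb{A}^{i,j}_{s,t}/\|\mathbb{A}^{i,j}_{s,t}\|_{L^2}$ together with a matching lower bound $\|\mathbb{A}^{i,j}_{s,t}\|_{L^2}\gtrsim|t-s|^{1/\varrho}$ expressing non-degeneracy of the lifted L\'evy area under Assumption~\ref{assumption:GaussianRegularity}, which confine the relevant laws to a sufficiently tight family. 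Everything else --- the discretisation of Lemma~\ref{lemma:DiscretisationNorm} and the chaos correlation inequality of Proposition~\ref{pro:Sidak} --- was precisely prepared in the earlier sections to make the argument routine.
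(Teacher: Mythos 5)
Your architecture coincides with the paper's: dyadic discretisation of the rough path H\"older norm, reduction of the Carnot--Carath\'eodory norm to the coordinates $\langle\rw_{s,t},e_p\rangle$ and $\langle\rw_{s,t},e_{p,q}\rangle$ with $p\neq q$, the chaos correlation inequality of Proposition \ref{pro:Sidak}, and summation of one-dimensional estimates over scales; your bookkeeping $\sum_{n\le N_\varepsilon}2^n\big((N_\varepsilon-n)\gamma\log 2+O(1)\big)+2^{N_\varepsilon}\sum_{j\ge1}2^je^{-c2^{2j\gamma}}\lesssim\varepsilon^{-1/\gamma}$ is correct, and your single-scale dyadic bound (proved by chaining with subadditivity of $\|\cdot\|_{cc}$) is a legitimate, in fact cleaner, substitute for Lemma \ref{lemma:DiscretisationNorm}. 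The genuine gap is exactly at the point you flag as delicate, and your proposed repair does not close it. First, hypercontractivity controls the upper tails of the normalised areas uniformly, but it gives no lower bound on $\bP\big[|\bA^{i,j}_{s,t}|<\eta\big]$: uniform moment or sub-exponential tail bounds are perfectly compatible with a law carrying very little mass near $0$, so ``confining the laws to a tight family'' by tail control alone does not produce the uniform constant you need in $-\log\bP[|Q|<\eta]\lesssim\log_+(\|Q\|_{L^2}/\eta)+1$. Second, the ``matching lower bound'' $\|\bA^{i,j}_{s,t}\|_{L^2}\gtrsim|t-s|^{1/\varrho}$ is not available under the hypotheses: Assumption \ref{assumption:GaussianRegularity} is only an upper bound on the $\varrho$-variation of the covariance, Theorem \ref{Thm:SmallBallProbab} imposes no non-degeneracy (lower bounds on $\sigma^2$ enter only in Proposition \ref{pro:SmallBallProbab-Lower}), and a Gaussian process that degenerates on subintervals satisfies the assumption while violating your claimed bound.

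The fix is simpler than what you propose and is the paper's device: use the independence of the components and condition. Given $\sigma\big(\langle W,e_p\rangle\big)$, the area $\big\langle\int_s^t W_{s,r}\otimes dW_r,e_{p,q}\big\rangle$ is a centred Gaussian with conditional standard deviation $\sigma=\|W^p_{s,\cdot}\1_{(s,t)}\|_{\cH}$, so that $\bP\big[|\bA^{p,q}_{s,t}|<\eta\big]=\bE\big[\erf\big(\eta/(\sqrt2\,\sigma)\big)\big]$; since $\bE[\sigma^2]\le M^2|t-s|^{2/\varrho}$ by Assumption \ref{assumption:GaussianRegularity}, restricting to the event $\{\sigma\le 2\,\bE[\sigma^2]^{1/2}\}$, which has probability at least $3/4$ by Chebyshev (this is the role played by the Jensen-type step in Equation \eqref{eq:ThmSmallBallProbab4.2}), gives $\bP\big[|\bA^{p,q}_{s,t}|<\eta\big]\gtrsim\erf\big(\eta/(CM|t-s|^{1/\varrho})\big)$ with constants uniform over all dyadic intervals. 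Note that only upper bounds on variances are needed anywhere in the argument, because decreasing a (conditional) variance only increases a centred small-ball probability; with this uniform estimate replacing your hypercontractivity step, the remainder of your proof goes through.
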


Secondly, we demonstrate that provided the covariance of a Gaussian process is adequately irregular, the small ball probabilities cannot converge slower than this same speed. 

\begin{proposition}
\label{pro:SmallBallProbab-Lower}
Let $\cL^W$ be a Gaussian measure and for $s, t\in [0,T]$, let $\bE\Big[ \big| W_{s, t} \big|^2 \Big] = \sigma^2\Big( \big| t-s \big| \Big)$. 

Suppose that $\exists h>0$ such that $h\leq T$ and
\begin{enumerate}
\item $\exists C_1>0$ such that $\forall \tau \in [0,h)$, 
\begin{equation}
\label{eq:Thm:SmallBallProbab-Lower-1}
\sigma^2\Big( \tau \Big) \geq C_1 \cdot |\tau|^{\tfrac{1}{\varrho}}. 
\end{equation}
\item Suppose that there exits $0<C_2<4$ such that for $\tau \in [0,\tfrac{h}{2})$, 
\begin{equation}
\label{eq:Thm:SmallBallProbab-Lower-2}
\sigma^2\Big( 2\tau\Big) \leq C_2 \cdot \sigma^2\Big( \tau\Big). 
\end{equation}
\item Suppose that $\sigma^2$ is three times differentiable and there exists a constant $C_3>0$ such that $\forall \tau \in [0,h)$, 
\begin{equation}
\label{eq:Thm:SmallBallProbab-Lower-3}
\Big| \nabla^3 \big[ \sigma^2 \big] \Big( \tau \Big) \Big| \leq \frac{C_3}{\tau^{3 - \tfrac{1}{\varrho}}}. 
\end{equation}
\end{enumerate}
Then
$$
-\log\Big( \bP\Big[ \| W \|_\alpha < \varepsilon \Big] \Big) \gtrsim \varepsilon^{\tfrac{-1}{\tfrac{1}{2\varrho} - \alpha}} . 
$$
\end{proposition}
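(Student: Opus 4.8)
Here is how I would prove Proposition \ref{pro:SmallBallProbab-Lower}.

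The plan is to carry out the argument dual to the proof of Theorem \ref{Thm:SmallBallProbab}: reduce the event $\{\|W\|_\alpha<\varepsilon\}$ to the statement that the increments of $W$ over \emph{every} dyadic subinterval of one carefully chosen length are simultaneously small, bound this conjunction from above by the anti-concentration estimate for correlated Gaussians of Lemma \ref{lemma:Stolz_Lem}, and then optimise over the length scale. One works throughout in the range $\tfrac13<\alpha<\tfrac{1}{2\varrho}$, so that $\tfrac{1}{2\varrho}-\alpha>0$, in parallel with Theorem \ref{Thm:SmallBallProbab}.

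First I would fix $n\in\bN$, set $\tau_n:=T2^{-n}$ and $t^n_k:=k\tau_n$ for $0\le k\le 2^n$, and note that the definition of the H\"older seminorm $\|W\|_\alpha=\sup_{s\neq t}|W_{s,t}|/|t-s|^\alpha$ immediately gives
\[
\big\{\,\|W\|_\alpha<\varepsilon\,\big\}\ \subseteq\ \bigcap_{k=0}^{2^n-1}\big\{\,|W_{t^n_k,t^n_{k+1}}|<\varepsilon\,\tau_n^{\alpha}\,\big\}.
\]
Writing $g^n_k:=W_{t^n_k,t^n_{k+1}}/\sigma(\tau_n)$, the $g^n_k$ are unit-variance linear functionals of $W$, hence unit vectors of $\cH$, and one computes
\[
\big\langle g^n_k,\,g^n_{k+m}\big\rangle_{\cH}\ =\ \frac{\sigma^2\!\big((m+1)\tau_n\big)-2\,\sigma^2\!\big(m\tau_n\big)+\sigma^2\!\big((m-1)\tau_n\big)}{2\,\sigma^2(\tau_n)},
\]
a second difference of $\sigma^2$ at spacing $\tau_n$ centred at $m\tau_n$. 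The key step is to bound these uniformly in $n$: Assumption \eqref{eq:Thm:SmallBallProbab-Lower-3} (integrated once, to give $|\nabla^2[\sigma^2](s)|\lesssim s^{1/\varrho-2}$ for small $s$) bounds the numerator by $C\,\tau_n^2(m\tau_n)^{1/\varrho-2}$ when $2\le m\tau_n<h$, Assumption \eqref{eq:Thm:SmallBallProbab-Lower-2} controls the case $m=1$, the remaining terms $m\tau_n\ge h$ are handled by the smoothness of $\sigma^2$ away from $0$ and contribute negligibly, and dividing through by $\sigma^2(\tau_n)\ge C_1\tau_n^{1/\varrho}$ (Assumption \eqref{eq:Thm:SmallBallProbab-Lower-1}) all powers of $\tau_n$ cancel; one is left with $|\langle g^n_k,g^n_{k+m}\rangle_{\cH}|\le d_m$ where $d_m\lesssim m^{1/\varrho-2}$, a sequence that is summable for $\varrho\in(1,\tfrac32)$ (the endpoint $\varrho=1$ being the classical near-Brownian case, cf. \cite{baldi1992some}). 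Thus Lemma \ref{lemma:Stolz_Lem} applies to the block $(g^n_k)_{k=0}^{2^n-1}$.

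By Lemma \ref{lemma:Stolz_Lem} there exist $M_1,M_2>0$, depending only on $d=\sum_{m\ge1}d_m$ and \emph{not} on $n$, with $\bP\big[\,2^{-n}\sum_{k=0}^{2^n-1}|g^n_k|\le M_1^{-1}\,\big]\le\exp(-2^n/M_2)$. Now if $\varepsilon\,\tau_n^{\alpha}/\sigma(\tau_n)\le M_1^{-1}$, then on the event $\{\|W\|_\alpha<\varepsilon\}$ every $|g^n_k|<\varepsilon\,\tau_n^\alpha/\sigma(\tau_n)\le M_1^{-1}$, so $\bP[\|W\|_\alpha<\varepsilon]\le\exp(-2^n/M_2)$. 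Since $\sigma(\tau_n)\ge\sqrt{C_1}\,\tau_n^{1/(2\varrho)}$ by Assumption \eqref{eq:Thm:SmallBallProbab-Lower-1}, this hypothesis holds whenever $\varepsilon\le\tfrac{\sqrt{C_1}}{M_1}\,T^{1/(2\varrho)-\alpha}\,2^{-n(1/(2\varrho)-\alpha)}$; because $1/(2\varrho)-\alpha>0$, for $\varepsilon$ small I would take $n=n(\varepsilon)$ to be the largest integer for which this holds, so that $2^n$ is comparable to $\varepsilon^{-1/(1/(2\varrho)-\alpha)}$, and conclude $-\log\bP[\|W\|_\alpha<\varepsilon]\ge 2^n/M_2\gtrsim\varepsilon^{-1/(\tfrac{1}{2\varrho}-\alpha)}$, as required. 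The main obstacle is precisely the uniform-in-$n$ correlation bound $\sup_n\sum_{m\ge1}d_m<\infty$ of the second paragraph: this is exactly what Assumptions \eqref{eq:Thm:SmallBallProbab-Lower-2} and \eqref{eq:Thm:SmallBallProbab-Lower-3} are designed to supply, and it is the only point where the three-times differentiability of $\sigma^2$ and the doubling constant $C_2<4$ are used; the initial reduction and the final optimisation over $n$ are routine.
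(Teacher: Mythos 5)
Your overall architecture --- trap $\{\|W\|_\alpha<\varepsilon\}$ inside the simultaneous smallness of a family of unit-variance Gaussian functionals at one dyadic scale, apply Lemma \ref{lemma:Stolz_Lem}, then optimise the scale --- is exactly the paper's, but your choice of functionals leaves a genuine gap. You take \emph{first-order} increments $g^n_k=W_{t^n_k,t^n_{k+1}}/\sigma(\tau_n)$, whose correlations are second differences of $\sigma^2$; integrating \eqref{eq:Thm:SmallBallProbab-Lower-3} once only yields $|\nabla^2[\sigma^2](s)|\lesssim s^{1/\varrho-2}$, hence a majorant $d_m\lesssim m^{1/\varrho-2}$, which is summable only for $\varrho>1$ --- as you yourself note. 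But the proposition is stated for the same range as the rest of the paper, $\varrho\in[1,3/2)$, and at $\varrho=1$ the sole hypothesis $\bE[|W_{s,t}|^2]=\sigma^2(|t-s|)$ does \emph{not} force independent, or even summably correlated, increments; so Lemma \ref{lemma:Stolz_Lem} cannot be invoked and your argument does not prove the statement as given (appealing to ``the classical near-Brownian case'' covers Brownian motion itself, not a general $\sigma^2$ with $\varrho=1$). The paper avoids precisely this by working with the Ciesielski/Haar coefficients $W_{(p,m)}=2^{p/2}\big(W_{\frac{m-1}{T2^p},\frac{2m-1}{T2^{p+1}}}-W_{\frac{2m-1}{T2^{p+1}},\frac{m}{T2^p}}\big)$: their covariance is a five-point difference of $\sigma^2$ whose Taylor expansion in the lag vanishes to second order, so \eqref{eq:Thm:SmallBallProbab-Lower-3} gives decay of order $(|m_1-m_2|-1)^{1/\varrho-3}$, summable on all of $[1,3/2)$. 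This is why the hypothesis is a \emph{third}-derivative bound, and it also explains the real role of \eqref{eq:Thm:SmallBallProbab-Lower-2}: with $C_2<4$ it secures the non-degeneracy $\bE[|W_{(p,m)}|^2]\ge 2^p\tfrac{4-C_2}{4}\sigma^2(2^{-p})\gtrsim 2^{p(1-1/\varrho)}$ of the wavelet coefficients, not (as in your reading) the nearest-neighbour correlation, which is $\le 1$ by Cauchy--Schwarz anyway and is no obstruction to Lemma \ref{lemma:Stolz_Lem}.

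A second, more easily repaired, defect: all three hypotheses are assumed only on $[0,h)$, yet you sum correlations over all lags $m\le T/\tau_n$ and dismiss those with $m\tau_n\ge h$ by appealing to ``smoothness of $\sigma^2$ away from $0$'', which is not among the hypotheses. The paper's proof fixes $q$ with $2^{-q}\asymp h$ and applies Lemma \ref{lemma:Stolz_Lem} only to a block of $2^{p-q}$ consecutive coefficients lying in a single window of length $\asymp h$, which still yields the exponent $2^{p-q}\asymp h\,\varepsilon^{-1/(\frac{1}{2\varrho}-\alpha)}$; your argument needs the same localisation. Your initial event inclusion, the use of \eqref{eq:Thm:SmallBallProbab-Lower-1} to lower-bound the variance, and the final optimisation over the dyadic level are correct and coincide with the paper's.
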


\begin{theorem}
\label{Thm:SmallBallProbab-Both}
Let $\tfrac{1}{3} <\alpha<\tfrac{1}{2\varrho}$. Let that $\cL^W$ be a Gaussian measure and $\forall s, t\in [0,T]$, let $\bE\Big[ \big| W_{s, t} \big|^2 \Big] = \sigma^2\Big( \big| t-s \big| \Big)$. 

Suppose that $\exists h>0$ and $c_1, c_2>0$ such that $\forall \tau \in [0,h)$, $\sigma^2(\tau)$ is convex and 
\begin{equation}
\label{eq:Thm:SmallBallProbab-Both}
c_1 \cdot |\tau|^{\tfrac{1}{\varrho}} \leq \sigma^2 \Big( \tau \Big) \leq c_2 \cdot | \tau |^{\tfrac{1}{\varrho}}. 
\end{equation}
Then $\cR^W$ satisfies Assumption \ref{assumption:GaussianRegularity}. Suppose additionally that $\frac{c_2 \cdot 2^{\tfrac{1}{\varrho}}}{c_1}<4$. Then Equation \eqref{eq:Thm:SmallBallProbab-Lower-2} is satisfied. 

Additionally, if $\sigma^2$ satisfies Equation \eqref{eq:Thm:SmallBallProbab-Lower-3}, then
$$
\fB(\varepsilon) \approx \varepsilon^{\frac{-1}{\tfrac{1}{2\varrho} - \alpha}}. 
$$
\end{theorem}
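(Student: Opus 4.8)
The strategy is to assemble the statement from the three facts already available: Assumption~\ref{assumption:GaussianRegularity} (which unlocks the upper bound of Theorem~\ref{Thm:SmallBallProbab}), the hypotheses of Proposition~\ref{pro:SmallBallProbab-Lower} (which unlocks a matching lower bound for the path $W$), and a soft comparison that transports that lower bound to the lift $\rw$. Throughout write $\tau=|t-s|$; since $\bE[|W_{s,t}|^2]=\sigma^2(\tau)$ the increments are $L^2$-stationary, so with $W_0=0$ the covariance of a rectangle $[s,t]\times[u,v]$ with $t\le u$ is the second mixed difference
$$
\cR^W\!\begin{pmatrix}s,&t\\ u,&v\end{pmatrix}=\tfrac12\Big(\sigma^2(u-t)+\sigma^2(v-s)-\sigma^2(v-t)-\sigma^2(u-s)\Big),
$$
in which $u-t$ is the smallest and $v-s$ the largest of the four arguments, $v-t$ and $u-s$ lie between them, and $(u-t)+(v-s)=(v-t)+(u-s)$.

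First I would verify Assumption~\ref{assumption:GaussianRegularity}. Fix $[a,b]\subseteq[0,h)$ and partitions of $[a,b]$ in each coordinate, and split the pairs of grid cells into ``off-diagonal'' pairs (the $x$-cell and $y$-cell disjoint) and the $O(n)$ ``diagonal'' pairs. Convexity of $\sigma^2$ on $[0,h)$ with $\sigma^2(0)=0$ makes $\sigma^2$ superadditive there, so by the interleaving noted above every off-diagonal rectangular increment of $\cR^W$ is nonnegative; telescoping this definite-sign sum (first in one variable, then the other, as for fractional Brownian motion in \cite{frizhairer2014}) bounds $\sum_{\mathrm{off}}\cR^W(\mathrm{rect})$ by $\sigma^2(b-a)$. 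Since $\varrho\ge1$ and the summands are nonnegative, $\sum_{\mathrm{off}}|\cR^W(\mathrm{rect})|^\varrho\le\big(\sum_{\mathrm{off}}\cR^W(\mathrm{rect})\big)^\varrho\le\sigma^2(b-a)^\varrho\le c_2^\varrho(b-a)$. On the diagonal cells, where the increments need not keep a definite sign, I would instead use Cauchy--Schwarz, $|\cR^W(\mathrm{rect}_{ij})|\le\sqrt{\sigma^2(\delta_i)\sigma^2(\delta_j)}\le c_2\,\delta_i^{1/2\varrho}\delta_j^{1/2\varrho}$ with $\delta_i$ the local mesh, after which AM--GM together with each $\delta_i$ occurring in $O(1)$ diagonal pairs gives $\sum_{\mathrm{diag}}|\cR^W(\mathrm{rect})|^\varrho\lesssim c_2^\varrho(b-a)$. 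Adding the two estimates yields $\|\cR^W\|_{\varrho\text{-var};[a,b]^{\times2}}\le M(b-a)^{1/\varrho}$; a standard subdivision and superadditivity argument extends this to all $s,t\in[0,T]$ (only the small-scale behaviour is used below), and the required range $\varrho\in[1,3/2)$ is exactly what $\tfrac13<\alpha<\tfrac1{2\varrho}$ enforces. Hence $W$ lifts to $\rw\in WG\Omega_\alpha(\bR^d)$ and Theorem~\ref{Thm:SmallBallProbab} gives $\fB(\varepsilon)\lesssim\varepsilon^{-1/(\frac1{2\varrho}-\alpha)}$.

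The remaining verifications are short. Inequality~\eqref{eq:Thm:SmallBallProbab-Lower-1} holds with $C_1=c_1$, and \eqref{eq:Thm:SmallBallProbab-Lower-2} follows from $\sigma^2(2\tau)\le c_2(2\tau)^{1/\varrho}=c_2\,2^{1/\varrho}\tau^{1/\varrho}$ combined with $\tau^{1/\varrho}\le c_1^{-1}\sigma^2(\tau)$, which give $\sigma^2(2\tau)\le\tfrac{c_2\,2^{1/\varrho}}{c_1}\sigma^2(\tau)$, so one may take $C_2=c_2\,2^{1/\varrho}/c_1\in(0,4)$ precisely under the extra hypothesis. With \eqref{eq:Thm:SmallBallProbab-Lower-3} assumed outright, all hypotheses of Proposition~\ref{pro:SmallBallProbab-Lower} hold, so $-\log\bP[\|W\|_\alpha<\varepsilon]\gtrsim\varepsilon^{-1/(\frac1{2\varrho}-\alpha)}$. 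To pass to $\fB$, note that by equivalence of homogeneous norms (using the supremum norm of \eqref{eq:HomoNorm}) the Carnot--Carath\'eodory norm dominates the degree-one component, so $\|\rw\|_\alpha\gtrsim\|W\|_\alpha$ and $\{\|\rw\|_\alpha<\varepsilon\}\subseteq\{\|W\|_\alpha\lesssim\varepsilon\}$; hence $\fB(\varepsilon)=-\log\bP[\|\rw\|_\alpha<\varepsilon]\ge-\log\bP[\|W\|_\alpha\lesssim\varepsilon]\gtrsim\varepsilon^{-1/(\frac1{2\varrho}-\alpha)}$, which together with the upper bound of the previous paragraph is the asserted $\fB(\varepsilon)\approx\varepsilon^{-1/(\frac1{2\varrho}-\alpha)}$.

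The one genuinely delicate step is the $\varrho$-variation estimate: the off-diagonal sum must be organised so its bound is $\sigma^2(b-a)$ and does not accumulate over the partition, which is exactly what the fixed sign coming from convexity of $\sigma^2$ provides, while the diagonal cells, where no such sign is available, have to be absorbed by the separate Cauchy--Schwarz argument. Everything else is bookkeeping with the constants $c_1,c_2,\varrho$ and the soft norm comparison.
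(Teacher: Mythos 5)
Your proposal has exactly the same skeleton as the paper's proof: verify Assumption \ref{assumption:GaussianRegularity} from \eqref{eq:Thm:SmallBallProbab-Both}, invoke Theorem \ref{Thm:SmallBallProbab} for the upper bound, check the hypotheses of Proposition \ref{pro:SmallBallProbab-Lower} with $C_1=c_1$ and $C_2=c_2\,2^{1/\varrho}/c_1<4$, and transfer the resulting path-level lower bound to $\fB$ via $\|W\|_\alpha\lesssim\|\rw\|_\alpha$; those three steps are verbatim what the paper does. The only genuine difference is the first step: the paper obtains the $\varrho$-variation bound on $\cR^W$ by citing \cite{frizhairer2014}*{Theorem 10.9}, whereas you sketch a direct proof. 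That sketch is where the soft spots are. First, the claim that each mesh length $\delta_i$ occurs in $O(1)$ ``diagonal'' pairs is false for a general pair of partitions (a single cell of a coarse partition can meet arbitrarily many cells of a fine one), so the AM--GM step as written does not close; note, however, that under the literal ``convex'' hypothesis every rectangular increment (overlapping or not) is nonnegative, so you could bound the entire double sum at once by $\big(\sum_{i,j}\cR^W(\mathrm{rect}_{ij})\big)^{\varrho}=\sigma^2(b-a)^{\varrho}$ and dispense with the diagonal/off-diagonal split altogether. Second, be aware that ``convex'' in the statement is almost certainly a slip for ``concave'': the cited Friz--Hairer criterion and the fractional Brownian motion example ($\sigma^2(\tau)=d\,\tau^{2H}$, $H<1/2$) both require concavity, and indeed for $\varrho>1$ a convex $\sigma^2$ with $\sigma^2(0)=0$ cannot satisfy the lower bound in \eqref{eq:Thm:SmallBallProbab-Both} near $0$, so the convex reading is essentially vacuous. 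Under concavity your sign claims reverse (off-diagonal increments are nonpositive) and the telescoping must be organised row-by-row in the standard Friz--Victoir fashion; the conclusion survives, but your argument would need that rewrite, or simply the citation the paper uses. The remaining verifications ($C_1$, $C_2$, the norm comparison, and the harmless absorption of constants into $\gtrsim$) are correct and match the paper.
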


\begin{example}
Fractional Brownian motion is the Gaussian process with covariance
$$
\bE\Big[ W^{H}_t \otimes W^{H}_s\Big] = \tfrac{d \cdot I_d}{2}\Big( |t|^{2H} + |s|^{2H} - |t-s|^{2H}\Big), 
$$
where $I_d$ is the $d$-dimensional identity matrix. It is well known that the covariance of Fractional Browian motion satisfies 
$$
\bE\Big[ \big| W^{H}_{s, t} \big|^2 \Big] = d \cdot \big| t-s \big|^{2H}, 
$$
so that the assumptions of Theorem \ref{Thm:SmallBallProbab-Both} are satisfied. The small ball probabilities of Fractional Brownian motion were studied in \cite{kuelbs1995small} with respect to the H\"older norm using that it has stationary increments, and Theorem \ref{Thm:SmallBallProbab-Both} extends these results to enhanced fractional Brownian motion when $H \in (\tfrac{1}{3}, \tfrac{1}{2})$. 

Further, the upper and lower bounds of Equation \eqref{eq:Thm:SmallBallProbab-Both} are only required locally around 0, so these results also apply for the fractional Brownian bridge (which fails to satisfy \eqref{eq:Thm:SmallBallProbab-Both} for $\tau>T/2$). 
\end{example}

\subsection{Preliminaries}

Firstly, we address a method for discretising the rough path H\"older norm. To the best of the authors knowledge, this result has not previously been stated in the framework of rough paths. The proof is an adaption of the tools used in \cite{kuelbs1995small}*{Theorem 2.2}. 

\begin{lemma}[Discretisation of Rough path norms]
\label{lemma:DiscretisationNorm}
Let $\rw \in WG\Omega_\alpha(\bR^d)$ be a rough path. Then we have
\begin{align*}
\| \rw \|_\alpha &\leq \max\Bigg( 2\sum_{l=1}^\infty \sup_{ i=1, ..., 2^l} \frac{\| \rw_{(i-1)T2^{-l}, iT2^{-l}} \|_{cc}}{\varepsilon^\alpha} , 
\\
& \qquad \qquad 3 \sup_{j\in \bN_0 } \sup_{ i=0, ... \left\lfloor \tfrac{2^j(T-\varepsilon)}{\varepsilon}\right\rfloor } \sum_{l=j+1}^\infty \sup_{m=1, ..., 2^{l-j}} \frac{ \| \rw_{(m-1)2^{-l} + i \varepsilon 2^{-j}, m2^{-l} + i \varepsilon 2^{-j}} \|_{cc} }{ \varepsilon^\alpha 2^{-\alpha(j+1)} } \Bigg).
\end{align*}
\end{lemma}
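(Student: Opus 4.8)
The plan is to bound $\|\rw\|_\alpha = \sup_{s<t} \|\rw_{s,t}\|_{cc}/|t-s|^\alpha$ by fixing an arbitrary pair $s<t$ in $[0,T]$ and building a dyadic-type chain from $s$ to $t$ whose increments are controlled by the two quantities on the right-hand side. Write $h = t-s$ and split into two regimes according to whether $h \geq \varepsilon$ or $h < \varepsilon$. For the ``large'' regime $h \geq \varepsilon$: I would choose the coarsest dyadic level $l_0$ with $T2^{-l_0} \leq h$, use the nested dyadic partition of $[0,T]$ at levels $l \geq l_0$ to express $\rw_{s,t}$ via Chen's relation $\rw_{s,u} = \rw_{s,r}\boxtimes\rw_{r,u}$ as a $\boxtimes$-product of dyadic increments of side-length $\leq h$, apply subadditivity of the homogeneous norm $\|\cdot\|_{cc}$, and bound each dyadic increment at level $l$ by $\sup_i \|\rw_{(i-1)T2^{-l},iT2^{-l}}\|_{cc}$. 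Since at most two dyadic intervals per level are needed (the standard ``binary expansion'' argument for covering $[s,t]$), this produces the factor $2$ and the first term. Dividing by $h^\alpha \geq (T2^{-l_0})^\alpha$ and comparing with $\varepsilon^\alpha$ requires a little care: here I would instead use $\varepsilon$ as the reference scale only in the regime $h<\varepsilon$, and for $h\geq\varepsilon$ absorb the comparison into the fact that increments at scales $\geq\varepsilon$ are dominated by the supremum defining $\|\rw\|_\alpha$ itself — so more precisely the first term handles $h\le\varepsilon$ at the pure dyadic grid points, and a shift argument handles general $s,t$.

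For the ``small'' regime, and for general off-grid $s,t$ with $h<\varepsilon$: I would locate the unique $j\in\bN_0$ with $\varepsilon 2^{-j-1} \leq h < \varepsilon 2^{-j}$ (if $h$ is even smaller one still uses the largest such $j$, or the $j$ giving the coarsest relevant shifted grid), and then find an index $i$ so that $s$ lies in $[i\varepsilon 2^{-j}, (i+1)\varepsilon 2^{-j}]$; the range of admissible $i$ is exactly $0,\dots,\lfloor 2^j(T-\varepsilon)/\varepsilon\rfloor$ as in the statement. Relative to the shifted base point $i\varepsilon 2^{-j}$, I would expand both $\rw_{s,\cdot}$ and $\rw_{\cdot,t}$ along the shifted dyadic grid $\{m2^{-l} + i\varepsilon 2^{-j}\}$ at levels $l\geq j+1$, again via Chen's relation and subadditivity, picking up at most two increments per level on each side — hence the factor $3$ (two for one side, roughly one net for the other after the meeting point, or $2+1$ after optimising the chain; the precise bookkeeping gives $3$). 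Each increment at level $l$ is bounded by the inner supremum $\sup_{m=1,\dots,2^{l-j}}\|\rw_{(m-1)2^{-l}+i\varepsilon 2^{-j},\,m2^{-l}+i\varepsilon 2^{-j}}\|_{cc}$, and dividing through by $h^\alpha \geq (\varepsilon 2^{-j-1})^\alpha = \varepsilon^\alpha 2^{-\alpha(j+1)}$ produces exactly the normalisation in the second term. Taking the supremum over all $j$ and all admissible $i$, and then the max with the first term, yields the claimed bound.

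The main obstacle is the combinatorial/geometric bookkeeping: making precise that any interval $[s,t]\subseteq[0,T]$ can be written as a concatenation of dyadic (or shifted-dyadic) intervals using at most two per dyadic level on each side, while simultaneously controlling which reference scale ($\varepsilon$ versus $\varepsilon 2^{-j-1}$) is the correct denominator, so that the two terms genuinely cover the two regimes with the constants $2$ and $3$ respectively. This is where the adaptation of \cite{kuelbs1995small}*{Theorem 2.2} from the scalar H\"older norm to the Carnot--Carath\'eodory homogeneous norm enters: the only structural inputs needed are Chen's relation and subadditivity of $\|\cdot\|_{cc}$, both available for $\rw\in WG\Omega_\alpha(\bR^d)$, so the scalar argument transfers, but the telescoping must be rewritten multiplicatively in $\boxtimes$ rather than additively. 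Everything else is a routine estimate once the chain is fixed.
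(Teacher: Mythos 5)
Your overall decomposition is the same as the paper's: split at the scale $\varepsilon$, treat $|t-s|\ge\varepsilon$ with a dyadic chaining bound divided by $\varepsilon^\alpha$, and treat $|t-s|<\varepsilon$ by locating $j$ with $\varepsilon 2^{-j-1}\le t-s<\varepsilon 2^{-j}$, anchoring $s$ in a cell of the shifted grid $\{i\varepsilon 2^{-j}\}$, and expanding along the shifted dyadic levels $l\ge j+1$ with denominator $\varepsilon^\alpha 2^{-\alpha(j+1)}$. That part is sound and matches the paper, which proceeds exactly this way (adapting Kuelbs--Li--Shao), using only Chen's relation and subadditivity of $\|\cdot\|_{cc}$.

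However, your handling of the regime $h=t-s\ge\varepsilon$ goes wrong where you declare that the comparison of denominators ``requires a little care'' and propose to ``absorb the comparison into the fact that increments at scales $\ge\varepsilon$ are dominated by the supremum defining $\|\rw\|_\alpha$ itself,'' with the first term instead handling $h\le\varepsilon$ at grid points. This is backwards and, as stated, circular: you cannot control $\|\rw\|_\alpha$ by a quantity involving $\|\rw\|_\alpha$. No care is needed here at all: in this regime $h\ge\varepsilon$ gives $h^{-\alpha}\le\varepsilon^{-\alpha}$ directly, so your original chaining bound (at most two dyadic intervals per level covering $[s,t]$) already yields the first term; the paper's route is even simpler, writing $\rw_{s,t}=\rw_{0,s}^{-1}\boxtimes\rw_{0,t}$ so that $\|\rw_{s,t}\|_{cc}\le\|\rw_{0,s}\|_{cc}+\|\rw_{0,t}\|_{cc}$ (this is where the factor $2$ comes from) and then expanding each $\rw_{0,\cdot}$ along the dyadics of $[0,T]$ with one supremum per level. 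Separately, your bookkeeping for the factor $3$ in the small regime is left vague (``two for one side, roughly one net for the other''); the clean way, as in the paper, is to note that $s\in[i\varepsilon 2^{-j},(i+1)\varepsilon 2^{-j})$ while $t$ may spill into the adjacent cell, so $\rw_{s,t}$ is controlled by at most three increments anchored at points of the $\varepsilon2^{-j}$-grid and of length less than $\varepsilon 2^{-j}$, each of which is then expanded dyadically with one supremum per level $l\ge j+1$; the supremum over $i$ in the statement absorbs the different anchor points. With the circular detour removed and the factor-$3$ step made precise, your argument coincides with the paper's proof.
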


\begin{proof}
Let $0< \varepsilon <T$. 
\begin{equation}
\label{eq:lemma:DiscretisationNorm1}
\| \rw\|_\alpha \leq \sup_{\substack{ s, t\in [0,T]\\ |t-s|\geq\varepsilon}} \frac{ \| \rw_{s, t}\|_{cc}}{|t-s|^{\alpha}} \bigvee \sup_{\substack{ s, t\in[0,T] \\ |t-s|<\varepsilon}} \frac{ \| \rw_{s, t}\|_{cc}}{|t-s|^{\alpha}} 
\leq \sup_{s\in[0,T]} \frac{2 \| \rw_{0,s}\|_{cc}}{\varepsilon^\alpha} \bigvee \sup_{\substack{0\leq s \leq T\\ 0\leq t \leq \varepsilon \\ |s+t|<T}} \frac{\| \rw_{s, s+t}\|_{cc}}{ |t|^\alpha}
\end{equation}

Firstly, writing $s\in[0,T]$ as a sum of dyadics and exploiting the sub-additivity of the Carnot-Carath\'eodory norm, we get
\begin{equation}
\label{eq:lemma:DiscretisationNorm1.1}
\| \rw_{0,s}\|_{cc} \leq \sum_{l=1}^\infty \sup_{i=1, ..., 2^l} \| \rw_{(i-1)T2^{-l}, iT2^{-l}} \|_{cc}. 
\end{equation}
Hence
\begin{equation}
\label{eq:lemma:DiscretisationNorm1.2}
\sup_{s\in[0,T]} \frac{2 \| \rw_{0,s}\|_{cc}}{\varepsilon^\alpha} \leq \sum_{l=1}^\infty \sup_{i=1, ..., 2^l} \frac{ 2\| \rw_{(i-1)T2^{-l}, iT2^{-l}} \|_{cc}}{\varepsilon^\alpha} . 
\end{equation}

Secondly,
\begin{align*}
\sup_{\substack{0\leq s \leq T\\ 0\leq t \leq \varepsilon}} \frac{\| \rw_{s, s+t}\|_{cc}}{ |t|^\alpha} \leq& \sup_{s\in [0,T]} \max_{j\in \bN_0} \sup_{\varepsilon 2^{-j-1}\leq t<\varepsilon 2^{-j}}  \frac{\| \rw_{s, s+t}\|_{cc}}{ |t|^\alpha}
\\
\leq& \max_{j\in \bN_0}  \sup_{s\in [0,T]}  \sup_{0<t<\varepsilon 2^{-j}} \frac{\| \rw_{s, s+t}\|_{cc}}{ |\varepsilon|^\alpha \cdot 2^{-\alpha(j+1)}}
\\
\leq& \max_{j\in \bN_0}  \max_{i=0, ..., \left\lfloor \tfrac{2^j(T-\varepsilon)}{\varepsilon}\right\rfloor }  \sup_{0<t<\varepsilon 2^{-j}} \frac{3 \| \rw_{i\varepsilon 2^{-j}, i\varepsilon 2^{-j}+t}\|_{cc}}{ |\varepsilon|^\alpha \cdot 2^{-\alpha(j+1)}}. 
\end{align*}

Then, as with Equation \eqref{eq:lemma:DiscretisationNorm1.1} we have that for $t\in (0, \varepsilon 2^{-j})$, 
$$
\| \rw_{i\varepsilon 2^{-j}, i\varepsilon 2^{-j}+t}\|_{cc} \leq \sum_{l=j+1}^\infty \sup_{m=1, ..., 2^{l-j}} \| \rw_{(m-1)2^{-l} + i \varepsilon 2^{-j}, m2^{-l} + i \varepsilon 2^{-j}} \|_{cc}. 
$$
Hence
\begin{equation}
\label{eq:lemma:DiscretisationNorm1.3}
\sup_{\substack{0\leq s \leq T\\ 0\leq t \leq \varepsilon \\ |s+t|<T}} \frac{\| \rw_{s, s+t}\|_{cc}}{ |t|^\alpha} \leq \sup_{j\in \bN_0 } \sup_{ i=0, ... \left\lfloor \tfrac{2^j(T-\varepsilon)}{\varepsilon}\right\rfloor } \sum_{l=j+1}^\infty \sup_{m=1, ..., 2^{l-j}} \frac{ 3 \| \rw_{(m-1)2^{-l} + i \varepsilon 2^{-j}, m2^{-l} + i \varepsilon 2^{-j}} \|_{cc} }{ \varepsilon^\alpha 2^{-\alpha(j+1)} }. 
\end{equation}
Combining Equation \eqref{eq:lemma:DiscretisationNorm1} with Equation \eqref{eq:lemma:DiscretisationNorm1.2} and Equation \eqref{eq:lemma:DiscretisationNorm1.3} yields the result. 
\end{proof}

\subsection{Proof of Theorem \ref{Thm:SmallBallProbab}}

The proof of the upper bound is the main contribution of this Section. 

\begin{proof}

Let $n_0$ be a positive integer such that $\varepsilon^{-1} \leq 2^{n_0} \leq 2\varepsilon^{-1}$ and denote $\beta = \tfrac{1}{2\varrho} - \alpha$ for brevity. Define
\begin{align}
\nonumber
\varepsilon_l^{(1)}:=& \Big( \frac{3}{2} \Big)^{\tfrac{-|l-n_0|}{2\varrho}} \varepsilon^{\tfrac{1}{2\varrho}} \cdot \frac{(1-2^{-\beta/2})}{4}, 
\\
\label{eq:ThmSmallBallProbab3.2}
\varepsilon_{j,l}^{(2)}:=& \frac{\varepsilon^{\beta} 2^{\tfrac{-l}{2\varrho}} }{3} \cdot \frac{2^{\beta(l+j)/2} (1-2^{-\beta/2})}{2^{-\alpha(j+1)} }.
\end{align}
Observe that these satisfy the properties 
\begin{align*}
\sum_{l=1}^\infty \varepsilon_l^{(1)} \leq \frac{ \varepsilon^{\tfrac{1}{2\varrho}} }{2} \quad\textrm{and}\quad
\sum_{l=j+1}^\infty \varepsilon_{j,l}^{(2)} \leq \frac{ \varepsilon^\beta }{3}. 
\end{align*}
Therefore, using Lemma \ref{lemma:DiscretisationNorm} gives the lower bound
\begin{align}
\nonumber
&\bP\Big[ \| \rw\|_\alpha \leq \varepsilon^\beta \Big] 
\geq \bP\Bigg[ \sup_{i=1, ..., 2^l} \| \rw_{(i-1)T2^{-l}, iT2^{-l}} \|_{cc} \leq \varepsilon_l^{(1)} \quad \forall l\in \bN, 
\\
\label{eq:Thm:SmallBallProbab1}
& \sup_{i=0, ..., \left\lfloor \tfrac{2^{j}(T-\varepsilon)}{\varepsilon}\right\rfloor} \sup_{m=1, ..., 2^{l-j}} \frac{\| \rw_{(m-1)2^{-l}\varepsilon + i2^{-j}\varepsilon, m2^{-l}\varepsilon + i2^{-l} \varepsilon} \|_{cc}}{ \varepsilon^\alpha 2^{-\alpha(j+1)}} \leq \varepsilon^{(2)}_{j, l} \quad \forall l\geq j+1, j,l\in \bN_0 \Bigg].
\end{align}

Next, using the equivalence of the Homogeneous norm from Equation \ref{eq:HomoNorm}, we have that there exists a constant dependent only on $d$ such that
\begin{align*}
\| \rw_{s, t}\|_{cc} \leq c(d) \cdot \sup_{A\in \cA_2} \Big| \langle \log_{\boxtimes} (\rw_{s, t}), e_A\rangle \Big|^{1/|A|}
\end{align*}
Using that all homogeneous norms are equivalent and Equation \eqref{eq:HomoNorm}, we get the representation
\begin{align*}
\| \rw_{s, t}\|_{cc} \leq& c(d) \sup_{p=1, ..., d} \Big| \Big\langle \rw_{s, t}, e_{p} \Big\rangle \Big| 
\bigvee \sup_{\substack{p,q=1, ..., d\\ p\neq q}} \Big| \Big\langle \rw_{s, t}, e_{p, q} \Big\rangle \Big|^{1/2}
\end{align*}
Applying Proposition \ref{pro:Sidak} to this yields
\begin{align}
\nonumber
\bP\Big[& \| \rw\|_\alpha \leq \varepsilon^\beta\Big] 
\\
\nonumber
\geq& \Bigg\{ \prod_{l=1}^\infty \prod_{i=1}^{2^l} 
\Bigg( 
\prod_{p=1}^d \bP\Big[ \Big| \Big\langle \rw_{(i-1)T2^{-l}, iT2^{-l}}, e_{p}\Big\rangle \Big| \leq \tfrac{\varepsilon_l^{(1)}}{c(d)} \Big] 
\\
\nonumber
&\quad \cdot \prod_{\substack{p,q=1\\ p\neq q}}^d \bP\Big[ \Big| \Big\langle \rw_{(i-1)T2^{-l}, iT2^{-l} }, e_{p,q}\Big\rangle \Big| \leq \Big(\tfrac{\varepsilon_l^{(1)}}{c(d)}\Big)^2 \Big] \Bigg) \Bigg\}
\\
\nonumber
&\times\Bigg\{
\prod_{j=0}^\infty \prod_{l=j+1}^\infty \prod_{i=0}^{\left\lfloor 2^j(T-\varepsilon)/\varepsilon \right\rfloor} \prod_{m=1}^{2^{l-j}} \Bigg( 
\prod_{p=1}^d \bP\Big[ \Big| \Big\langle \rw_{\varepsilon (m-1)2^{-l} + \varepsilon i2^{-j}, \varepsilon m2^{-l} + \varepsilon i2^{-j}}, e_{p}\Big\rangle \Big| \leq \tfrac{\varepsilon_{j,l}^{(2)}\cdot \varepsilon^\alpha 2^{-\alpha(j+1)}}{c(d)} \Big] 
\\
\label{eq:ThmSmallBallProbab1.1}
&\quad \cdot \prod_{\substack{p,q=1\\p\neq q}}^d \bP\Big[ \Big| \Big\langle \rw_{\varepsilon (m-1)2^{-l} + \varepsilon i2^{-j}, \varepsilon m2^{-l} + \varepsilon i2^{-j}}, e_{p,q}\Big\rangle \Big| \leq \Big( \tfrac{\varepsilon_{j,l}^{(2)}\cdot \varepsilon^\alpha 2^{-\alpha(j+1)}}{c(d)}\Big)^2 \Big] \Bigg) \Bigg\}. 
\end{align}

For the terms associated to words of length 1, the computation of this probability under Assumption \ref{assumption:GaussianRegularity} is simply
\begin{align}
\bP\Big[ |W_{s, t}| \leq \varepsilon\Big] =
 \erf\Big( \tfrac{\varepsilon}{\sqrt{2} \bE[ |W_{s, t}|^2]^{1/2}} \Big)
\label{eq:ThmSmallBallProbab4.1}
\geq& \erf\Big( \tfrac{\varepsilon}{\sqrt{2} M |t-s|^{1/2\varrho}} \Big). 
\end{align}
For longer words, we only attain the lower bound. 
\begin{align}
\nonumber
\bP\bigg[ \Big| \Big\langle \int_s^t W_{s, r}\otimes dW_r, e_{p, q}\Big\rangle \Big| < \varepsilon \bigg] =& \bE \Bigg[ \bP\Bigg[ \Big| \Big\langle \int_s^t W_{s, r}\otimes dW_r, e_{p, q}\Big\rangle \Big| < \varepsilon\Bigg| \sigma\Big(\langle W, e_p\rangle \Big) \Bigg] \Bigg]
\\
\nonumber
=& \bE\Big[ \erf\Big( \tfrac{\varepsilon}{\sqrt{2} \| W_{s, \cdot} \1_{(s, t)} \|_\cH} \Big) \Big]
\\
\geq& \erf\Big( \tfrac{\varepsilon}{\sqrt{2} \bE[ \| W_{s, \cdot} \1_{(s, t)} \|_\cH^2]^{1/2} } \Big) 
\label{eq:ThmSmallBallProbab4.2}
\geq \erf\Big( \tfrac{\varepsilon}{\sqrt{2} M|t-s|^{2/(2\varrho)} } \Big). 
\end{align}
We also use the lower bounds
\begin{align}
\label{eq:ThmSmallBallProbab2.1}
\erf\Big( \tfrac{t}{\sqrt{2}} \Big) \geq& \tfrac{t}{2} & \mbox{ for } t\in[0,1],
\\
\label{eq:ThmSmallBallProbab2.2}
\erf\Big( \tfrac{st}{\sqrt{2}} \Big) \geq& \exp\Bigg( \frac{-\exp\Big( \tfrac{-(st)^2}{2}\Big)}{1-\exp\Big( \tfrac{-s^2}{2} \Big)} \Bigg) & \mbox{ for } s>0, t\in [1, \infty).
\end{align}

We now consider the terms from Equation \ref{eq:ThmSmallBallProbab1.1} with the product over $(j, l, i, m)$. By Assumption \ref{assumption:GaussianRegularity}, the expression \eqref{eq:ThmSmallBallProbab3.2} and Equation \ref{eq:ThmSmallBallProbab4.1} we have
\begin{align*}
\bP\Big[ \Big| \Big\langle \rw_{\varepsilon (m-1)2^{-l} + \varepsilon i2^{-j}, \varepsilon m2^{-l} + \varepsilon i2^{-j}}, e_{p}\Big\rangle \Big| \leq \tfrac{\varepsilon_{j,l}^{(2)}\cdot \varepsilon^\alpha 2^{-\alpha(j+1)}}{c(d)} \Big] 
\geq \erf\Bigg( \tfrac{(1-2^{-\beta/2}) }{3Mc(d)} \cdot 2^{\beta(l+j)/2} \Bigg).
\end{align*}
By similarly applying Equation \ref{eq:ThmSmallBallProbab4.2}
\begin{align*}
\bP\Big[& \Big| \Big\langle \rw_{\varepsilon (m-1)2^{-l} + \varepsilon i2^{-j}, \varepsilon m2^{-l} + \varepsilon i2^{-j}}, e_{p,q}\Big\rangle \Big| \leq \Big( \tfrac{\varepsilon_{j,l}^{(2)}\cdot \varepsilon^\alpha 2^{-\alpha(j+1)}}{c(d)}\Big)^2 \Big] 
\geq \erf\Bigg( \Big(\tfrac{(1-2^{-\beta/2}) }{3Mc(d)}\Big)^2 \cdot 2^{2\beta(l+j)/2} \Bigg).
\end{align*}
Next, we denote $s = \frac{(1-2^{-\beta/2)}) }{3Mc(d)}$, apply the lower bound \eqref{eq:ThmSmallBallProbab2.2} and multiply all the terms together correctly to obtain 
\begin{align}
\nonumber
\prod_{j=0}^\infty \prod_{l=j+1}^\infty& \exp\Bigg( \frac{-T2^l}{\varepsilon} \cdot \Bigg[ \frac{d}{1-e^{-s^2/2}} \exp\Big( \tfrac{-s^2}{2} 2^{\beta(l+j)} \Big) + \frac{d(d-1)}{2(1-e^{-s^4/2})} \exp\Big( \tfrac{-s^4}{2} 2^{2\beta(l+j)} \Big) \Bigg] \Bigg)
\\
\label{eq:ThmSmallBallProbab5.1}
&\geq \exp\Bigg( -\frac{c_1(d, T, M, \beta)}{\varepsilon} \Bigg). 
\end{align}

Secondly, we consider the terms from Equation \ref{eq:ThmSmallBallProbab1.1} with the product over $(l,i)$ and restrict ourselves to the case where $l>n_0$. By applying the definition of $n_0$, $\varepsilon_l^{(1)}$ and using Assumption \ref{assumption:GaussianRegularity}
\begin{align*}
\bP\Big[& \Big| \Big\langle \rw_{(i-1)2^{-l}, i2^{-l}}, e_{p}\Big\rangle \Big| \leq \tfrac{\varepsilon_l^{(1)}}{c(d)} \Big] 
\geq \erf\Bigg( \Big(\tfrac{4}{3}\Big)^{\tfrac{l-n_0}{2\varrho}} \cdot \tfrac{(1-2^{-\beta/2})}{4Mc(d) \sqrt{2}} \Bigg).
\end{align*}
Similarly, by using Equations \eqref{eq:ThmSmallBallProbab4.2},
\begin{align*}
\bP\Big[ \Big| \Big\langle \rw_{(i-1)2^{-l}, i2^{-l} }, e_{p,q}\Big\rangle \Big| \leq \Big(\tfrac{\varepsilon_l^{(1)}}{c(d)}\Big)^2 \Big] 
\geq \erf\Bigg( \Big(\tfrac{4}{3}\Big)^{\tfrac{2(l-n_0)}{2\varrho}} \cdot \tfrac{1}{\sqrt{2}} \cdot \Big(\tfrac{(1-2^{-\beta/2})}{4Mc(d)}\Big)^2 \Bigg). 
\end{align*}
Now applying Equation \eqref{eq:ThmSmallBallProbab2.2} and multiplying all the terms together gives
\begin{align}
\nonumber
\prod_{l=n_0+1}^\infty \exp\Bigg(& -2^l\Bigg[ \frac{d}{1-e^{-s^2/2}} \exp\Big( -\tfrac{s^2}{2}\Big(\tfrac{4}{3}\Big)^{\tfrac{l-n_0}{2\varrho}}\Big) 
+ \frac{d(d-1)}{2(1-e^{-s^4/2})} \exp\Big(-\tfrac{s^4}{2} \Big( \tfrac{4}{3}\Big)^{\tfrac{2(l-n_0)}{2\varrho}} \Big) \Bigg] \Bigg)
\\
\label{eq:ThmSmallBallProbab5.2}
\geq& \exp\Big( -2^{n_0} c_2(d, T, M, \beta)\Big) \geq \exp\Big( -\tfrac{2c_2(d, T, M, \beta)}{\varepsilon} \Big)
\end{align}
where $s=\Big(\tfrac{(1-2^{-\beta/2})}{4Mc(d)}\Big)$. 

Finally, we come to the terms from Equation \ref{eq:ThmSmallBallProbab1.1} with the product over $(l,i)$ where we consider the remaining terms for $l=0, ..., n_0$. Using the definition of $\varepsilon$ and Assumption \ref{assumption:GaussianRegularity}
\begin{align*}
\bP\Big[& \Big| \Big\langle \rw_{(i-1)2^{-l}, i2^{-l}}, e_{p}\Big\rangle \Big| \leq \tfrac{\varepsilon_l^{(1)}}{c(d)} \Big] 
\geq 
\erf\Bigg( \Big( \tfrac{1}{3}\Big)^{\tfrac{n_0-l}{2\varrho}} \cdot \tfrac{1}{\sqrt{2}} \cdot\tfrac{1-2^{-\beta/2}}{4Mc(d)} \Bigg).
\end{align*}
Similarly, by using Equations \eqref{eq:ThmSmallBallProbab4.2}, 
\begin{align*}
\bP\Big[ \Big| \Big\langle \rw_{(i-1)2^{-l}, i2^{-l} }, e_{p,q}\Big\rangle \Big| \leq \Big(\tfrac{\varepsilon_l^{(1)}}{c(d)}\Big)^2 \Big] 
\geq 
\erf\Bigg( \Big( \tfrac{1}{3}\Big)^{\tfrac{2(n_0-l)}{2\varrho}} \cdot \tfrac{1}{\sqrt{2}} \cdot \Big( \tfrac{1-2^{-\beta/2}}{4Mc(d)}\Big)^2 \Bigg). 
\end{align*}
For these terms, we use the lower bound \eqref{eq:ThmSmallBallProbab2.1} and multiply all the terms together to get
\begin{align}
\nonumber
\prod_{l=0}^{n_0}& \Bigg( \Bigg[ \tfrac{1}{2} \cdot \tfrac{(1-2^{-\beta/2})}{4Mc(d)} \cdot \Big( \tfrac{1}{3}\Big)^{\tfrac{(n_0-l)}{2\varrho}} \Bigg]^{dT2^l} \cdot 
\Bigg[ \tfrac{1}{2} \cdot \Big(\tfrac{(1-2^{-\beta/2})}{4Mc(d)}\Big)^2 \cdot \Big( \tfrac{1}{3}\Big)^{\tfrac{2(n_0-l)}{2\varrho}} \Bigg]^{\tfrac{d(d-1)T2^l}{2}} \Bigg)
\\
\nonumber
\geq& \exp\Bigg( -2^{n_0} \sum_{l=1}^{n_0} \Bigg[ dT\bigg( 2^{-(n_0-l)} \log\Big( 2\cdot \Big(\tfrac{4Md(c)}{1-2^{-\beta/2}}\Big)\Big) + (n_0-l)\log\Big( 3^{\tfrac{1}{2\varrho}} \Big) \bigg)
\\
\nonumber
&\qquad +\tfrac{d(d-1)}{2} \bigg( 2^{-(n_0-l)} \log\Big( 2\cdot \Big(\tfrac{4Md(c)}{1-2^{-\beta/2}}\Big)^2 \Big) + (n_0-l)\log\Big( 3^{\tfrac{2}{2\varrho}} \Big) \bigg) \Bigg] \Bigg)
\\
\label{eq:ThmSmallBallProbab5.3}
&\geq \exp\Big( -2^{n_0} c_3(d, T, M, \beta) \Big) \geq \exp\Big( -\tfrac{2c_3(d, T, M, \beta)}{\varepsilon} \Big).
\end{align}
Combining Equations \eqref{eq:ThmSmallBallProbab5.1}, \eqref{eq:ThmSmallBallProbab5.2} and \eqref{eq:ThmSmallBallProbab5.3} gives that
\begin{align*}
\eqref{eq:ThmSmallBallProbab1.1} \geq \exp\Big( -\tfrac{(c_1+c_2+c_3)}{\varepsilon} \Big)
\quad \Rightarrow\quad 
-\log\Big( \bP\Big[ \| \rw\|_\alpha \leq \varepsilon^\beta\Big] \Big) \lesssim \varepsilon^{-1} .
\end{align*}
\end{proof}

\subsection{Proof of Theorem \ref{Thm:SmallBallProbab-Both}}

This first result is a canonical adaption of the proof found in \cite{stolz1996some}*{Theorem 1.4} with the differentiability requirements weakened. We emphasise this proof is not original and included only for completeness. 

\begin{proof}[Proof of Proposition \ref{pro:SmallBallProbab-Lower}]
By the Cielsielski isomorphism (see \cite{herrmann2013stochastic}), we have that there exists a constant $\tilde{C}>0$ such that
$$
\sup_{s, t\in[0,T]} \frac{|W_{s, t}|}{|t-s|^{\alpha}} \geq \tilde{C} \sup_{p\in \bN_0} \sup_{m=1, ..., 2^{p}} 2^{p(\alpha-1/2)} |W_{(p,m)}|
$$
where
$$
W_{(p,m)} = 2^{p/2} \Big( W_{\tfrac{m-1}{T\cdot 2^{p}}, \tfrac{2m-1}{T\cdot 2^{p+1}}} - W_{\tfrac{2m-1}{T\cdot 2^{p+1}}, \tfrac{m}{T\cdot 2^{p}}} \Big). 
$$
Then for $q\in \bN_0$ such that $\tfrac{h}{2} \leq \tfrac{1}{T\cdot 2^{q}}<h$ and $p>q$, 
\begin{align*}
\| W \|_{\alpha}>& \tilde{C} \cdot \sup_{m=1, ..., 2^p} 2^{p(\alpha-1/2)} |W_{(p,m)}|
\\
>&\tilde{C} \cdot 2^{p(\alpha-1/2)} \cdot \sup_{n=0, ..., 2^{p-q}} \frac{1}{2^{p-q}} \sum_{m=1}^{2^{p-q}} \big| W_{(p,n\cdot 2^{p-q} + m)} \big|. 
\end{align*}
Thus for some choice of $p>q$ and $n=0, ..., 2^{p-q}$, 
$$
\bP\Big[ \| W \|_\alpha < \varepsilon \Big] \leq \bP\bigg[ \frac{1}{2^{p-q}} \cdot \sum_{m=1}^{2^{p-q}} \big| W_{(p,n\cdot 2^{p-q} +m)} \big| < \frac{\varepsilon \cdot 2^{p(1/2 - \alpha)}}{\tilde{C}} \bigg]. 
$$
From Equation \eqref{eq:Thm:SmallBallProbab-Lower-2} and Equation \eqref{eq:Thm:SmallBallProbab-Lower-1}, 
\begin{align*}
\bE\Big[ \big| W_{(p, m)} \big|^2 \Big] =& 2^{p} \bigg( 4 \cdot \sigma^2\Big( \tfrac{1}{2^{p+1}} \Big) - \sigma^2\Big( \tfrac{1}{2^p} \Big) \bigg)
\\
\geq& 2^p \cdot \frac{4-C_2}{4} \sigma^2\Big( \tfrac{1}{2^p} \Big) \geq 2^{p(1 - 1/\varrho)} \cdot \frac{C_1(4-C_2)}{4}. 
\end{align*}
Renormalising the wavelets gives
\begin{align*}
&\bP\bigg[ \frac{1}{2^{p-q}} \cdot \sum_{m=1}^{2^{p-q}} \big| W_{(p,n\cdot 2^{p-q} +m)} \big| < \frac{\varepsilon \cdot 2^{p(1/2 - \alpha)}}{\tilde{C}} \bigg]
\\
&\leq
\bP\Bigg[ \frac{1}{2^{p-q}} \cdot \sum_{m=1}^{2^{p-q}} \frac{\big| W_{(p,n\cdot 2^{p-q} +m)} \big|}{\bE\Big[ \big| W_{(p,n\cdot 2^{p-q} +m)} \big|^2 \Big]^{1/2}} < \frac{\varepsilon}{\tilde{C}}\cdot \sqrt{\tfrac{4}{C_1(4-C_2)}} \cdot 2^{p(\tfrac{1}{2\varrho} - \alpha)} \Bigg]. 
\end{align*}
Now
\begin{align*}
\bE\Big[ W_{(p, m_1)} \cdot W_{(p, m_2)} \Big] =& \frac{-2^p}{2}\bigg( \sigma^2\Big( \tfrac{|m_1 - m_2 - 1|}{2^p} \Big) - 4 \sigma^2\Big( \tfrac{|m_1 - m_2 - 1/2|}{2^p} \Big) + 6\sigma^2\Big( \tfrac{|m_1 - m_2|}{2^p} \Big)
\\
& - 4\sigma^2\Big( \tfrac{|m_1 - m_2 + 1/2|}{2^p} \Big) + \sigma^2\Big( \tfrac{|m_1 - m_2 + 1|}{2^p} \Big) \bigg)
\end{align*}
By considering a Taylor expansion of the function
$$
f(x) =  \sigma^2\Big( \tfrac{|n - x|}{2^p} \Big) - 4 \sigma^2\Big( \tfrac{|n - x/2|}{2^p} \Big) + 6\sigma^2\Big( \tfrac{|n|}{2^p} \Big) - 4\sigma^2\Big( \tfrac{|n + x/2|}{2^p} \Big) + \sigma^2\Big( \tfrac{|n + x|}{2^p} \Big) 
$$
and using that $f(0) = f'(0) = f''(0) = 0$, we get that $\exists \xi\in[0,1]$ such that
\begin{align*}
&f(1) 
\\
&= \tfrac{1}{12\cdot (2^p)^3} \Bigg( \bigg( \nabla^3\Big[ \sigma^2\Big]\Big( \tfrac{|n-\xi/2|}{2^p} \Big) - \nabla^3\Big[ \sigma^2\Big]\Big( \tfrac{|n+\xi/2|}{2^p} \Big) \bigg) - 2\bigg( \nabla^3\Big[ \sigma^2\Big]\Big( \tfrac{|n-\xi|}{2^p} \Big) - \nabla^3\Big[ \sigma^2\Big]\Big( \tfrac{|n+\xi|}{2^p} \Big) \bigg) \Bigg). 
\end{align*}
Applying this representation with Equation \eqref{eq:Thm:SmallBallProbab-Lower-3} gives that
\begin{equation}
\label{eq:pro:SmallBallProbab-Lower:proof2}
\frac{\bE\Big[ W_{(p,m_1)} \cdot W_{(p, m_2)} \Big] }{\bE\Big[ \big| W_{(p,m_1)} \big|^2 \Big]^{1/2}\cdot \bE\Big[ \big| W_{(p,m_1)} \big|^2 \Big]^{1/2}} \leq \tfrac{C_1(4-C_2)C_3}{8} \Big( |m_1 - m_2| - 1 \Big)^{\tfrac{1}{\varrho} - 3}
\end{equation}
Taking $\varepsilon$ small enough so that there exists a $p>q$ such that
\begin{equation}
\label{eq:pro:SmallBallProbab-Lower:proof1}
\tfrac{\tilde{C}}{M_1} \cdot \sqrt{\tfrac{C_1(4-C_2)}{4}} \cdot 2^{(p+1)(\alpha - \tfrac{1}{2\varrho})} 
\leq 
\varepsilon 
\leq 
\tfrac{\tilde{C}}{M_1} \cdot \sqrt{\tfrac{C_1(4-C_2)}{4}} \cdot 2^{p(\alpha - \tfrac{1}{2\varrho})}
\end{equation}
where $M_1$ is chosen from Lemma \ref{lemma:Stolz_Lem} gives us
$$
\bP\Big[ \| W \|_\alpha < \varepsilon \Big] 
\leq
\bP\Bigg[ \frac{1}{2^{p-q}} \cdot \sum_{m=1}^{2^{p-q}} \frac{\big| W_{(p,n\cdot 2^{p-q} +m)} \big|}{\bE\Big[ \big| W_{(p,n\cdot 2^{p-q} +m)} \big|^2 \Big]^{1/2}} < \frac{1}{M_1} \Bigg]. 
$$
Thanks to Equation \eqref{eq:pro:SmallBallProbab-Lower:proof2}, we can apply Lemma \ref{lemma:Stolz_Lem} to get
$$
\bP\Big[ \| W \|_\alpha < \varepsilon \Big] 
\leq 
\exp\Big( \tfrac{-2^{(p-q)}}{M_2} \Big). 
$$
However, by Equation \eqref{eq:pro:SmallBallProbab-Lower:proof1}, we have
$$
2^{p-q} \leq T \cdot h \cdot \Big[ \tfrac{M_1}{\tilde{C}} \cdot \sqrt{\tfrac{4}{C_1(4-C_2)}} \cdot 2^{1/(2\varrho) - \alpha} \Big]^{\tfrac{1}{\alpha - 1/(2\varrho)}} \cdot \varepsilon^{\tfrac{1}{\alpha - \tfrac{1}{2\varrho}}}
$$
so that
$$
\log\Big( \bP\Big[ \| W \|_\alpha < \varepsilon \Big] \Big) \lesssim -\varepsilon^{\tfrac{1}{\tfrac{1}{2\varrho} - \alpha}}. 
$$
\end{proof}

\begin{proof}[Proof of Theorem \ref{Thm:SmallBallProbab-Both}]
Suppose that $\sigma^2$ satisfies Equation \eqref{eq:Thm:SmallBallProbab-Both}. By \cite{frizhairer2014}*{Theorem 10.9}, Assumption \ref{assumption:GaussianRegularity} is satisfied. Hence, Theorem \ref{Thm:SmallBallProbab} implies
$$
\fB(\varepsilon) \lesssim \varepsilon^{\tfrac{-1}{\tfrac{1}{2\varrho} - \alpha}}. 
$$
On the other hand, Equation \eqref{eq:Thm:SmallBallProbab-Both} implies Equation \eqref{eq:Thm:SmallBallProbab-Lower-1}. The additional assumption that $\tfrac{c_2 \cdot 2^{1/\varrho}}{c_1}<4$ implies Equation \eqref{eq:Thm:SmallBallProbab-Lower-2}. Under the final assumption of Equation \eqref{eq:Thm:SmallBallProbab-Lower-3}, the assumptions of Proposition \ref{pro:SmallBallProbab-Lower} are satisfied. Finally, using the identity $\| W \|_\alpha  \lesssim \| \rw \|_\alpha$ gives
$$
\fB(\varepsilon) \gtrsim \varepsilon^{\tfrac{-1}{\tfrac{1}{2\varrho} - \alpha}}. 
$$

\end{proof}

\subsection{Limitations and further progress}

Gaussian rough paths have been successfully studied when the regularity of the path $\alpha \in (\tfrac{1}{4}, \tfrac{1}{3}]$. However, we do not address this example in this paper. 

Those familiar with the Philip-Hall Lie basis will realise that we will additionally need to account for the SBPs of terms of the form
$$
\int_s^t \int_s^r W_{s,q} \otimes dV_q \otimes dU_r , \quad \int_s^t \int_s^r W_{s,q} \otimes dW_q \otimes dV_r. 
$$
where $W$, $V$ and $U$ are independent, identically distributed Gaussian processes. The first term can be address with another application of Proposition \ref{pro:Sidak}. 

Let $(E_1, \cH_1, \cL_1)$, $(E_2, \cH_2, \cL_2)$ and $(E_3, \cH_3, \cL_3)$ be abstract Wiener spaces. The authors were able to demonstrate that when $\varepsilon$ is chosen to be small, for any sequence $h_i \in \cH_1 \otimes \cH_2 \otimes \cH_3$ and any choice of Gaussian measure $\cL$ over $E_1 \oplus E_2 \oplus E_3$ with marginals $\cL_1$, $\cL_2$ and $\cL_3$ satisfies
$$
\cL\Big[ \bigcap_{i} \big\{ | h_i(\hat{\otimes})|<\varepsilon \big\} \Big] \geq \Big( \cL_1 \times \cL_2 \times \cL_3\Big)\Big[ \bigcap_i \big\{ | h_i(\hat{\otimes})|<\varepsilon \big\} \Big]
$$

At face value, this would suggest the SBPs of terms of the form $\int_s^t \int_s^r W_{s,q} \otimes dW_q \otimes dV_r$ should be lower bounded by SBPs of terms of the form $\int_s^t \int_s^r W_{s,q} \otimes dV_q \otimes dU_r $. However, a key requirement is that $\varepsilon$ is chosen smaller that the variance of the functionals $h_i(\hat{\otimes})$ and for $\varepsilon$ large enough the inequality flips. 

This is naturally justified by the observation that the intersection of a ball with a hyperbola (both with common centre) is convex when the radius of the ball is small, but for large radius the set is not convex (so that one cannot apply Equation \eqref{eq:GaussCorrConj}). 

\newpage
\section{Metric entropy of Cameron-Martin balls}
\label{section:MetricEntropy}

This problem was first studied in \cite{Kuelbs1993} for Gaussian measures. While the law of a Gaussian rough path has many of the properties that Gaussian measures are known for, it is not itself a Gaussian so this result is not immediate. 

\begin{definition}
Let $(E, d)$ be a metric space and let $K$ be a compact subset of $E$. We define the $d$-metric entropy of $K$ to be $\fH(\varepsilon, K):=\log( \fN(\varepsilon, K)) $ where
$$
\fN(\varepsilon, K):=\min\left\{ n\geq 1: \exists e_1, ..., e_n\in E, \bigcup_{j=1}^n \bB(e_j, \varepsilon) \supseteq K\right\}
$$
and $\bB(e_i, \varepsilon):=\{ e\in E: d(e, e_i)<\varepsilon\}$. 
\end{definition}

Given a Gaussian measure $\cL^W$ with RKHS $\cH$ and unit ball $\cK$, let us consider the set of rough paths
\begin{equation}
\label{eq:RoughRKHSBall}
\rk:=\Big\{ \rh=S_2[h]: h\in \cK \Big\} \subset G\Omega_\alpha(\bR^d). 
\end{equation}

We can easily show that this set is \emph{equicontinuous} as a path on $G^2(\bR^d)$ so by the Arzel\`a–Ascoli theorem, see for example \cite{friz2010multidimensional}*{Theorem 1.4}, it must be compact in the metric space $WG\Omega_\alpha(\bR^d)$. Hence $\fN_{d_\alpha} (\varepsilon, \rk)$ is finite. 

\begin{theorem}
\label{thm:SmallBallMetricEnt1}
Let $\tfrac{1}{3} <\alpha<\tfrac{1}{2\varrho}$. Let that $\cL^W$ be a Gaussian measure and $\forall s, t\in [0,T]$, let $\bE\Big[ \big| W_{s, t} \big|^2 \Big] = \sigma^2\Big( \big| t-s \big| \Big)$. 

Suppose that 
\begin{enumerate}
\item $\exists h>0$ and $c_1, c_2>0$ such that $\frac{c_2 \cdot 2^{\tfrac{1}{\varrho}}}{c_1}<4$ and $\forall \tau \in [0,h)$, $\sigma^2(\tau)$ is convex and 
$$
c_1 \cdot |\tau|^{\tfrac{1}{\varrho}} \leq \sigma^2 \Big( \tau \Big) \leq c_2 \cdot | \tau |^{\tfrac{1}{\varrho}}. 
$$
\item $\exists c_3>0$ such that $\forall \tau \in [0,h)$, 
$$
\Big| \nabla^3 \big[ \sigma^2 \big] \Big( \tau \Big) \Big| \leq c_3 \cdot \tau^{\tfrac{1}{\varrho}-3}. 
$$
\end{enumerate}
Then the metric entropy of the set $\rk$ with respect to the H\"older metric satisfies
$$
\fH_{d_\alpha}( \varepsilon, \rk) \approx \varepsilon^{\tfrac{-1}{\tfrac{1}{2}+\tfrac{1}{2\varrho} - \alpha}}. 
$$
\end{theorem}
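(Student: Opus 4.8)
The plan is to transplant the Kuelbs--Li duality between small ball probabilities and the metric entropy of the Cameron--Martin ball (as in \cite{Kuelbs1993}) to the rough path setting, substituting each Gaussian ingredient by its enhanced analogue from Section \ref{section:EnGaussInequal}. Write $\varphi(\varepsilon):=\fB(\varepsilon)=-\log\cL^\rw[\bB_\alpha(\rId,\varepsilon)]$ and $\gamma:=(\tfrac{1}{2\varrho}-\alpha)^{-1}$. The first step is bookkeeping: the hypotheses of Theorem \ref{thm:SmallBallMetricEnt1} are precisely those of Theorem \ref{Thm:SmallBallProbab-Both} (convexity and the two-sided power bound on $\sigma^2$, the constraint $c_2 2^{1/\varrho}/c_1<4$, and the third-derivative bound), so that $\varphi(\varepsilon)\approx\varepsilon^{-\gamma}$; and a one-line computation with $\delta:=\tfrac1{2\varrho}-\alpha$ gives $\tfrac{2\gamma}{2+\gamma}=\tfrac{2}{1+2\delta}=(\tfrac12+\tfrac1{2\varrho}-\alpha)^{-1}$, the target exponent. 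It therefore suffices to prove $\fH_{d_\alpha}(\eta,\rk)\approx\eta^{-2\gamma/(2+\gamma)}$ from $\varphi\approx\varepsilon^{-\gamma}$.

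Next I would record the scaling dictionary. Since the dilation $\delta_\lambda$ is a group automorphism scaling $\|\cdot\|_{cc}$ by $\lambda$, one has $d_\alpha(\delta_\lambda\rx,\delta_\lambda\ry)=\lambda\, d_\alpha(\rx,\ry)$, and a short computation with $\log_\boxtimes,\exp_\boxtimes$ shows $\delta_\lambda S_2[h]=S_2[\lambda h]$; hence $\delta_\lambda\rk=\{S_2[h]:\|h\|_\cH\le\lambda\}$ and $\fN_{d_\alpha}(r,\delta_\lambda\rk)=\fN_{d_\alpha}(r/\lambda,\rk)$. From Lemma \ref{lem:RPTranslation} and $T^h(\rId)=S_2[h]=\rh$ we get $T^h(\bB_\alpha(\rId,r))=\bB_\alpha(\rh,r)$, so that $\bB_\alpha(\rId,r)+\delta_\lambda\rk=\bigcup_{\|h\|_\cH\le\lambda}\bB_\alpha(\rh,r)$ is contained in the open $r$-neighbourhood of $\delta_\lambda\rk$. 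Finally, from the Gaussian tail asymptotic $-\log(1-\Phi(x))=\tfrac{x^2}{2}+O(\log x)$, the quantity $a=a(\varepsilon):=-\Phi^{-1}\big(\cL^\rw[\bB_\alpha(\rId,\varepsilon)]\big)$ satisfies $a(\varepsilon)\approx\sqrt{\varphi(\varepsilon)}\approx\varepsilon^{-\gamma/2}$, hence $\varepsilon/a(\varepsilon)\approx\varepsilon^{1+\gamma/2}$.

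For the upper bound, fix small $\varepsilon$ and let $\rh_1,\dots,\rh_N$ be a maximal $2\varepsilon$-separated subset of $\delta_{2a}\rk$. The balls $\bB_\alpha(\rh_i,\varepsilon)$ are pairwise disjoint, and the rough-path Cameron--Martin formula (Lemma \ref{lem:CamMartinFormRP}), applied with $\|h_i\|_\cH\le 2a$, gives $\cL^\rw[\bB_\alpha(\rh_i,\varepsilon)]\ge e^{-2a^2}\cL^\rw[\bB_\alpha(\rId,\varepsilon)]=e^{-2a^2-\varphi(\varepsilon)}$; summing against total mass $1$ yields $N\le e^{2a^2+\varphi(\varepsilon)}\le e^{C\varepsilon^{-\gamma}}$. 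A maximal separated set is a $2\varepsilon$-cover, so $\fN_{d_\alpha}(\varepsilon/a,\rk)=\fN_{d_\alpha}(2\varepsilon,\delta_{2a}\rk)\le e^{C\varepsilon^{-\gamma}}$; choosing $\varepsilon$ an explicit power of the target radius $\eta$ with $\varepsilon/a(\varepsilon)\le\eta$ and using monotonicity of $\fN_{d_\alpha}$ in the radius gives $\fH_{d_\alpha}(\eta,\rk)\lesssim\eta^{-2\gamma/(2+\gamma)}$. For the lower bound I would instead take an $\varepsilon$-cover of $\delta_{2a}\rk$ by $M=\fN_{d_\alpha}(\varepsilon,\delta_{2a}\rk)$ balls; its $\varepsilon$-neighbourhood, which contains $\bB_\alpha(\rId,\varepsilon)+\delta_{2a}\rk$, is then covered by $M$ balls of radius $2\varepsilon$, each of $\cL^\rw$-measure at most $\cL^\rw[\bB_\alpha(\rId,2\varepsilon)]=e^{-\varphi(2\varepsilon)}$ by Anderson's inequality (Lemma \ref{lem:AndersonInequalityRP}). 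Borell's rough-path inequality (Lemma \ref{lem:BorellInequalRP}) with $A=\bB_\alpha(\rId,\varepsilon)$ and $\lambda=2a$ gives $\cL^\rw_*[\bB_\alpha(\rId,\varepsilon)+\delta_{2a}\rk]\ge\Phi(2a-a)=\Phi(a)\ge\tfrac12$, whence $M\ge\tfrac12 e^{\varphi(2\varepsilon)}$; since $\fN_{d_\alpha}(\varepsilon,\delta_{2a}\rk)=\fN_{d_\alpha}(\varepsilon/(2a),\rk)$ and $\varphi(2\varepsilon)\gtrsim\varepsilon^{-\gamma}$, choosing $\varepsilon$ a power of $\eta$ with $\varepsilon/(2a(\varepsilon))\ge\eta$ gives $\fH_{d_\alpha}(\eta,\rk)\gtrsim\eta^{-2\gamma/(2+\gamma)}$.

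The main obstacle is not the packing combinatorics --- which is a mechanical adaptation of \cite{Kuelbs1993} --- but the fact that $\cL^\rw$ is \emph{not} a Gaussian measure, so Kuelbs--Li cannot be quoted verbatim; one must verify that its proof only ever uses (i) Borell's inequality, (ii) the Cameron--Martin shift, (iii) Anderson's inequality, and (iv) homogeneity of the RKHS ball under the dilation, all of which are available in the enhanced setting through Lemmas \ref{lem:AndersonInequalityRP}, \ref{lem:CamMartinFormRP}, \ref{lem:BorellInequalRP} together with the algebraic structure of Section \ref{subsec:RoughPaths} (in particular $\delta_\lambda S_2[h]=S_2[\lambda h]$ and the $T^h$-invariance of $d_\alpha$). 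A secondary point of care is the measurability of the translated sets $\bB_\alpha(\rId,\varepsilon)+\delta_\lambda\rk$, dealt with exactly as in Lemma \ref{lem:BorellInequalRP} by passing to the open $\varepsilon$-neighbourhood and using inner measures.
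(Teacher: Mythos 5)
Your proposal is correct and follows essentially the same route as the paper: the packing-plus-Cameron--Martin bound and the covering-plus-Anderson-plus-Borell bound you derive with $\lambda=2a(\varepsilon)$ are exactly the two inequalities of Proposition \ref{pro:MetricEnt1} (Lemmas \ref{lem:CamMartinFormRP}, \ref{lem:AndersonInequalityRP}, \ref{lem:BorellInequalRP}), and your conversion via the dilation identity $\fN_{d_\alpha}(r,\delta_\lambda\rk)=\fN_{d_\alpha}(r/\lambda,\rk)$ together with the two-sided asymptotics $\fB(\varepsilon)\approx\varepsilon^{-\gamma}$ from Theorem \ref{Thm:SmallBallProbab-Both} matches the paper's substitution $\eta\approx\sqrt{2\fB(\varepsilon)}$ and exponent bookkeeping. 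The only differences are cosmetic (inlining the auxiliary proposition and choosing $\lambda=2a$ rather than $a$), so no gap to report.
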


\begin{remark}
The maps $S_2: C^{1-var}([0,T]; \bR^d) \to G\Omega_\alpha(\bR^d)$ and $W \mapsto \rw$ are known to be measurable but not continuous. Therefore it is reasonably remarkable that this mapping takes a compact set to a compact set and that the two sets have the same metric entropy. 
\end{remark}

\subsection{Proof of Theorem \ref{thm:SmallBallMetricEnt1}}

In order to prove this, we first prove the following auxiliary result. 
\begin{proposition}
\label{pro:MetricEnt1}
Let $\cL^W$ be a Gaussian measure with RKHS $\cH$ satisfying Assumption \ref{assumption:GaussianRegularity}. Then for any $\eta,\varepsilon>0$, 
\begin{equation}
\label{eq:MetricEnt1.1}
\fH_{d_\alpha} \Big( 2\varepsilon, \delta_\eta (\rk) \Big) \leq \tfrac{\eta^2}{2} - \log\Big( \cL^\rw\Big[ \bB_\alpha(\rId, \varepsilon)\Big]\Big),
\end{equation}
and 
\begin{equation}
\label{eq:MetricEnt1.2}
\fH_{d_\alpha} \Big(\varepsilon, \delta_\eta (\rk) \Big)  \geq \log\Bigg( \Phi\bigg( \eta + \Phi^{-1}\Big( \cL^\rw\Big[ \bB_\alpha(\rId, \varepsilon)\Big]\Big)\bigg)\Bigg) - \log\Big( \cL^\rw\Big[\bB_\alpha (\rId, 2\varepsilon)\Big]\Big).
\end{equation}
\end{proposition}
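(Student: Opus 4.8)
The plan is to prove the two estimates separately, following the classical Kuelbs--Li argument but feeding in the rough-path versions of the Gaussian inequalities established in Section \ref{section:EnGaussInequal}. Throughout I use that every element of $\delta_\eta(\rk)$ is of the form $\rg=S_2[g]$ with $\|g\|_\cH\le\eta$, and that $\delta_\eta(\rk)$ is compact.

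\textbf{Upper bound \eqref{eq:MetricEnt1.1}.} First I would fix a maximal $2\varepsilon$-separated subset $\{\rh_1,\dots,\rh_N\}\subset\delta_\eta(\rk)$, which is finite by compactness; write $\rh_i=S_2[h_i]$ with $\|h_i\|_\cH\le\eta$. The triangle inequality makes the open balls $\bB_\alpha(\rh_i,\varepsilon)$ pairwise disjoint, so $\sum_{i=1}^N\cL^\rw[\bB_\alpha(\rh_i,\varepsilon)]\le1$. By the Cameron--Martin formula for rough paths (Lemma \ref{lem:CamMartinFormRP}), each term is bounded below by $\exp(-\|h_i\|_\cH^2/2)\,\cL^\rw[\bB_\alpha(\rId,\varepsilon)]\ge\exp(-\eta^2/2)\,\cL^\rw[\bB_\alpha(\rId,\varepsilon)]$, hence $N\le\exp(\eta^2/2)/\cL^\rw[\bB_\alpha(\rId,\varepsilon)]$. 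Maximality forces $\{\bB_\alpha(\rh_i,2\varepsilon)\}_{i=1}^N$ to cover $\delta_\eta(\rk)$, so $\fN_{d_\alpha}(2\varepsilon,\delta_\eta(\rk))\le N$; taking logarithms gives \eqref{eq:MetricEnt1.1}.

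\textbf{Lower bound \eqref{eq:MetricEnt1.2}.} Set $N=\fN_{d_\alpha}(\varepsilon,\delta_\eta(\rk))$ and choose centres $\rh_1,\dots,\rh_N\in WG\Omega_\alpha(\bR^d)$ with $\delta_\eta(\rk)\subseteq\bigcup_{j=1}^N\bB_\alpha(\rh_j,\varepsilon)$. The geometric heart of the argument is the inclusion $T[\bB_\alpha(\rId,\varepsilon),\delta_\eta(\rk)]\subseteq\bigcup_{j=1}^N\bB_\alpha(\rh_j,2\varepsilon)$: for $\rg=S_2[g]\in\delta_\eta(\rk)$ and $\rx\in\bB_\alpha(\rId,\varepsilon)$, the $T^h$-invariance of $d_\alpha$ (Lemma \ref{lem:RPTranslation}) together with $T^g(\rId)=\rg$ yields $d_\alpha(T^g(\rx),\rg)=d_\alpha(\rx,\rId)<\varepsilon$, and since $d_\alpha(\rg,\rh_j)<\varepsilon$ for some $j$ we get $d_\alpha(T^g(\rx),\rh_j)<2\varepsilon$. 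Now apply Borell's rough path inequality (Lemma \ref{lem:BorellInequalRP}) with $A=\bB_\alpha(\rId,\varepsilon)$ and $\lambda=\eta$ to bound the inner measure of the left-hand set below by $\Phi\big(\eta+\Phi^{-1}(\cL^\rw[\bB_\alpha(\rId,\varepsilon)])\big)$, while subadditivity followed by Anderson's inequality (Lemma \ref{lem:AndersonInequalityRP}) bounds the measure of the right-hand union above by $N\cdot\cL^\rw[\bB_\alpha(\rId,2\varepsilon)]$. Rearranging for $N$ and taking logarithms produces \eqref{eq:MetricEnt1.2}.

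\textbf{Anticipated obstacle.} Once the scaffolding of Section \ref{section:EnGaussInequal} is in place the computations are routine, so the genuinely delicate points are all in the lower bound. One is the identification of $T[\bB_\alpha(\rId,\varepsilon),\delta_\eta(\rk)]$ as a ``fattening'' of $\delta_\eta(\rk)$, which relies crucially on $T^h$ leaving $d_\alpha$ invariant and on the identity $T^h(\rId)=S_2[h]$; the other is that this translated set need not be Borel, so one must work with the inner measure $\cL_*^\rw$ from Lemma \ref{lem:BorellInequalRP} and sandwich it between the Borell lower bound and the Borel superset $\bigcup_j\bB_\alpha(\rh_j,2\varepsilon)$. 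Finally, keeping track of which small-ball radius ($\varepsilon$ versus $2\varepsilon$) appears in each place is not automatic: matching the statement requires being careful in both the packing/covering comparison and the triangle-inequality step.
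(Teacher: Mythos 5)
Your proposal is correct and follows essentially the same route as the paper: the upper bound via a maximal $2\varepsilon$-packing of $\delta_\eta(\rk)$, disjointness of the $\varepsilon$-balls, and the rough-path Cameron--Martin bound (Lemma \ref{lem:CamMartinFormRP}); the lower bound via the inclusion of $T[\bB_\alpha(\rId,\varepsilon),\delta_\eta(\rk)]$ in the union of $2\varepsilon$-balls around an $\varepsilon$-covering, combined with Borell's rough path inequality (Lemma \ref{lem:BorellInequalRP}) and Anderson's inequality (Lemma \ref{lem:AndersonInequalityRP}). Your write-up in fact spells out the translation-invariance step and the inner-measure issue more explicitly than the paper does, but the argument is the same.
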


\begin{proof}
Firstly, for some $\varepsilon>0$ consider the quantity
$$
\fM_{d_\alpha} (\varepsilon, \delta_\eta (\rk) ) = \max \Big\{ n\geq 1: \exists \rh_1, ..., \rh_n\in \delta_\eta(\rk) , d_\alpha( \rh_i, \rh_j)\geq 2\varepsilon \quad \forall i\neq j\Big\},
$$
and a set $\fF$ such that $| \fF|= \fM_{\delta_\alpha}(\varepsilon, \delta_\eta (\rk))$ and for any two distinct $\rh_1, \rh_2\in \fF$ that $d_\alpha(\rh_1, \rh_2)\geq 2\varepsilon$. Similarly, there must exist a set $\fG$ such that $| \fG| = \fN_{d_\alpha}(\varepsilon, \delta_\eta(\rk))$ and 
$$
\delta_\eta( \rk) \subseteq \bigcup_{\rh\in \fG} \bB_\alpha( \rh, \varepsilon). 
$$
Similarly, since $\fF$ is a maximal set, we also have
$$
\delta_\eta( \rk) \subseteq \bigcup_{\rh\in \fF} \bB_\alpha( \rh, 2\varepsilon),
$$
It is therefore natural that
$$
\fH\Big(2\varepsilon, \delta_\eta(\rk)\Big) \leq \log\Big( \fM_{d_\alpha}( \varepsilon, \delta_{\eta}(\rk))\Big)
\quad \mbox{ and } \quad
\fM_{d_\alpha} (\varepsilon, \delta_\eta (\rk) ) \min_{\rh \in \fF} \cL^\rw \Big[ \bB_\alpha ( \rh, \varepsilon) \Big] \leq 1. 
$$
By taking logarithms and applying Lemma \ref{lem:CamMartinFormRP} we get
$$
\log\Big( \fM_{d_\alpha} ( \varepsilon, \delta_\eta(\rk) ) \Big) - \tfrac{\eta^2}{2} + \log\Big( \cL^\rw\Big[ \bB_\alpha(\rId, \varepsilon)\Big]\Big) \leq 0,
$$
which implies \eqref{eq:MetricEnt1.1}. 

Secondly, from the definition of $\fF$ we get that
$$
T \Big( \bB_\alpha(\rId, \varepsilon), \delta_\eta(\rk)\Big) \subseteq \bigcup_{\rh\in \fG} \bB_\alpha(\rh, 2\varepsilon),
$$
where 
\begin{equation}
\label{eq:Ball+shift}
T \Big( \bB_\alpha(\rId, \varepsilon), \delta_\eta(\rk)\Big) := \Big\{ T^{\eta h}(\rx): \rx \in \bB_\alpha(\rId, \varepsilon), h\in \cK\Big\}. 
\end{equation}
Hence applying Lemma \ref{lem:AndersonInequalityRP} and Lemma \ref{lem:BorellInequalRP} gives
\begin{align*}
\fN_{d_\alpha} (\varepsilon, \delta_\eta(\rk)) \cdot \cL^\rw\Big[ \bB_\alpha(\rId, 2\varepsilon)\Big] 
\geq& \fN_{d_\alpha} (\varepsilon, \delta_\eta(\rk)) \cdot \max_{\rh\in \fG} \cL^\rw\Big[ \bB_\alpha( \rh, \varepsilon)\Big], 
\\
\geq& \cL^\rw \Big[ T \Big( \bB_\alpha(\rId, \varepsilon), \delta_\eta(\rk)\Big) \Big]  \geq \Phi\bigg( \eta + \Phi^{-1}\bigg( \cL^\rw\Big[ \bB_\alpha(\rId, \varepsilon)\Big]\bigg) \bigg),
\end{align*}
and taking logarithms yields \eqref{eq:MetricEnt1.2}. 
\end{proof}

\begin{proof}[Proof of Theorem \ref{thm:SmallBallMetricEnt1}]

Using Equation \ref{eq:MetricEnt1.1} for $\eta,\varepsilon>0$ we have
$$
\fH_{d_\alpha}( 2\varepsilon, \delta_\eta( \rk) ) \leq \tfrac{\eta^2}{2} + \fB(\varepsilon). 
$$
By the properties of the dilation operator it follows that $\fH_{d_\alpha}(\varepsilon, \delta_\eta(\rk)) = \fH_{d_\alpha}(\varepsilon/\eta, \rk)$. Making the substitution $\eta = \sqrt{ 2\fB(\varepsilon)}$ and using that $\fB$ is regularly varying at infinity leads to
$$
\fH_{d_\alpha} \Big( \tfrac{2\varepsilon}{\sqrt{2 \fB(\varepsilon)}}, \rk \Big) \leq 2 \fB(\varepsilon). 
$$
Finally, relabeling $\varepsilon' = \tfrac{2 \varepsilon}{\sqrt{2\fB(\varepsilon)}}$ which means $\varepsilon' \approx \sqrt{2} \varepsilon^{\frac{\beta+1/2}{\beta} }$ and $\beta =\tfrac{1}{2\varrho}-\alpha$, we apply Theorem \ref{Thm:SmallBallProbab} to obtain
$$
\fH_{d_\alpha} \Big( \varepsilon', \rk \Big) \leq \Big(\tfrac{2\varepsilon}{\varepsilon'}\Big)^2 \lesssim \varepsilon^{\frac{-1}{\tfrac{1}{2} +\beta} }. 
$$

For the second inequality, for $\eta,\varepsilon>0$, we use Equation \eqref{eq:MetricEnt1.2} with the substitution 
$$
-\eta = \Phi^{-1}\big( \cL^\rw[ \bB_\alpha(\rId, \varepsilon)]\big).
$$
This yields
$$
\fH_{d_\alpha}\Big( \tfrac{\varepsilon}{\Phi^{-1}\big( \cL^\rw[ \bB_\alpha(\rId, \varepsilon)]\big)}, \rk\Big) \geq \fB(2\varepsilon) + \log(1/2). 
$$

Next, using the known limit
$$
\lim_{x\to +\infty} \frac{-\Phi^{-1}(\exp(-x^2/2))}{x} = 1, 
$$
we equivalently have that
$$
\frac{\Phi^{-1}\Big(\exp\big(-\fB(\varepsilon)\big)\Big)^2 }{2} \sim \fB(\varepsilon),
$$
as $\varepsilon\to0$ since $\fB(\varepsilon)\to 0$. From here we conclude that as $\varepsilon\searrow  0$ we have 
$$
\frac{\varepsilon}{\Phi^{-1}\big( \cL^\rw[ \bB_\alpha(\rId, \varepsilon)]\big)} \sim \frac{\varepsilon}{\sqrt{2\fB(\varepsilon)} }. 
$$
Therefore, for $\varepsilon$ small enough and using that $\fB$ varies regularly, we obtain
$$
\fH_{d_\alpha}\Big( \tfrac{\varepsilon}{\sqrt{2 \fB(\varepsilon)}}, \rk \Big) \gtrsim \fB(2\varepsilon) + \log(1/2) \gtrsim \frac{\fB( \varepsilon)}{2} . 
$$
We conclude by making the substitution $\varepsilon' = \tfrac{\varepsilon}{\sqrt{2\fB(\varepsilon)}}$ and apply Theorem \ref{Thm:SmallBallProbab-Both}. 
\end{proof}

\newpage
\section{Optimal quantization and empirical distributions}
\label{section:OptimalQuant}

In this section, we prove the link between metric entropy and optimal quantization and solve the asymptotic rate of convergence for the quantization problem of a Gaussian rough path. 

\subsection{Introduction to finite support measures}

For a neat introduction to Quantization, see \cite{graf2007foundations}.

\begin{definition}
Let $(E, d)$ be a separable metric space endowed with the Borel $\sigma$-algebra $\cB$. For $r\geq1$, we denote $\cP_r(E)$ to be the space of integrable measures over the measure space $(E, \cB)$ with finite $r^{th}$ moments. For $\mu, \nu\in \cP_r(E)$, we denote the Wasserstein distance $\bW_{d}^{(r)}: \cP_r(E) \times \cP_r(E) \to \bR^+$ to be
$$
\bW^{(r)}_{d}(\mu, \nu) = \inf_{\gamma \in \cP(E \times E)} \bigg( \int_{E \times E} d(x, y)^r \gamma(dx, dy) \bigg)^{\tfrac{1}{r}}
$$
where $\gamma$ is a joint distribution over $E\times E$ which has marginals $\mu$ and $\nu$. 
\end{definition}

The Wasserstein distance induces the topology of weak convergence of measure as well as convergence in moments of order up to and including $2$.

\begin{definition}
Let $I$ be a countable index, let $\fS:=\{ \fs_i, i\in I\}$ be a partition of $E$ and let $\fC:=\{\fc_i\in E, i\in I\}$ be a codebook. 

Define $\fQ$ be the set of all quantizations $q:E \to E$ such that 
\begin{align*}
q(x) = \fc_i &\quad \mbox{for $x\in \fs_i$}, \qquad q(E) = \fC
\end{align*}
for any possible $\fS$ and $\fC$. Then the pushforward measure by the function $q$ is
$$
\cL \circ q^{-1}(\cdot) = \sum_{i\in I} \cL(\fs_i) \delta_{\fc_i} (\cdot) \in \cP_2(E). 
$$
\end{definition}


\begin{definition}[Optimal quantizers]
\label{dfn:OptimalQuantizer}
Let $n\in \bN$ and $r\in[1, \infty)$. The minimal $n^{th}$ quantization error of order $r$ of a measure $\cL$ on a separable metric space $E$ is defined to be
$$
\fE_{n, r}(\cL) = \inf\Bigg\{ \Big( \int_E \min_{\fc\in \fC} d( x, \fc)^r d\cL(x) \Big)^{\tfrac{1}{r}}: \fC\subset E, 1\leq |\fC| \leq n\Bigg\}. 
$$

A codebook $\fC = \{\fc_i, i\in I\}$ with $1\leq|\fC|\leq n$ is called an $n$-optimal set of centres of $\cL$ (of order r) if
$$
\fE_{n, r}(\cL) = \Big( \int_E \min_{i=1, ..., n} d(x, \fc_i)^r d\cL(x) \Big)^{\tfrac{1}{r}}
$$
\end{definition}

\begin{remark}
Suppose that one has found an $n$-optimal set of centres for a measure $\cL$. Then an optimal partition can be obtained for any collection of sets $\fs_i$ such that
$$
\fs_i \subset \{ x \in E: d(x, \fc_i) \leq d(x, \fc_j), j=1, ..., n\}, 
\quad
\fs_i \cap \fs_j = \emptyset, i\neq j
$$
and $\cup_{i=1}^n \fs_i = E$. 

Frequently, this means that the partition is taken to be the interior of the collection of Voronoi sets with centres equal to the $n$-optimal codebook plus additions that account for the boundaries between the Voronoi sets. 
\end{remark}

\subsection{Optimal Quantization}

This next result follows the ideals of \cite{graf2003functional}, although a similar result proved using a different method can be found in \cite{dereich2003link}. 

\begin{theorem}
\label{thm:QuantizationRateCon}
Let $\cL^W$ be a Gaussian measure satisfying Assumption \ref{assumption:GaussianRegularity} and let $\cL^\rw$ be the law of the lift to the Gaussian rough path. Then for any $1\leq r <\infty$
\begin{equation}
\label{eq:QuantizationRateCon}
\fB^{-1}\Big( \log(2n)\Big) \lesssim \fE_{n, r}(\cL^\rw) 
\end{equation}
where $\fB$ is the SBP of the measure $\cL^\rw$. 
\end{theorem}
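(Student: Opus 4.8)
The plan is to run the classical volumetric lower bound for quantization errors, with Anderson's inequality for Gaussian rough paths (Lemma~\ref{lem:AndersonInequalityRP}) playing the role that translation invariance of Lebesgue measure plays in the finite-dimensional case: it lets us bound the mass of a ball around \emph{any} codebook point by the mass of the ball of the same radius centred at $\rId$.

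First I would fix $n\in\bN$ and an arbitrary codebook $\fC=\{\fc_1,\dots,\fc_m\}\subset WG\Omega_\alpha(\bR^d)$ with $m\le n$. For $\varepsilon>0$ one has $\{\rx:\min_{\fc\in\fC}d_\alpha(\rx,\fc)<\varepsilon\}=\bigcup_{i=1}^m\bB_\alpha(\fc_i,\varepsilon)$, so by subadditivity of $\cL^\rw$ and Lemma~\ref{lem:AndersonInequalityRP},
\[
\cL^\rw\Big[\,\min_{\fc\in\fC}d_\alpha(\rx,\fc)<\varepsilon\Big]\;\le\;\sum_{i=1}^m\cL^\rw\big[\bB_\alpha(\fc_i,\varepsilon)\big]\;\le\;n\,\cL^\rw\big[\bB_\alpha(\rId,\varepsilon)\big]\;=\;n\,e^{-\fB(\varepsilon)}.
\]
A one-line Markov estimate then gives
\[
\int_{WG\Omega_\alpha(\bR^d)}\min_{\fc\in\fC}d_\alpha(\rx,\fc)^r\,d\cL^\rw(\rx)\;\ge\;\varepsilon^r\,\cL^\rw\Big[\,\min_{\fc\in\fC}d_\alpha(\rx,\fc)\ge\varepsilon\Big]\;\ge\;\varepsilon^r\Big(1-n\,e^{-\fB(\varepsilon)}\Big).
\]
Now I would optimise in $\varepsilon$: for $n$ large enough that $\log(2n)$ lies in the range of $\fB$, choose $\varepsilon_n:=\fB^{-1}(\log(2n))$, so that $n\,e^{-\fB(\varepsilon_n)}=\tfrac12$ and the right-hand side is at least $\tfrac12\fB^{-1}(\log(2n))^r$. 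Taking the $r$-th root and then the infimum over all codebooks with at most $n$ points yields $\fE_{n,r}(\cL^\rw)\ge 2^{-1/r}\fB^{-1}(\log(2n))$, which is exactly \eqref{eq:QuantizationRateCon} (with constant $C=2^{1/r}$ in the $\lesssim$).

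The only point requiring care is the legitimacy of $\fB^{-1}$: one needs $\varepsilon\mapsto\cL^\rw[\bB_\alpha(\rId,\varepsilon)]$, equivalently $\varepsilon\mapsto\bP[\|\rw\|_\alpha<\varepsilon]$, to be continuous and eventually strictly increasing, so that $\fB$ is continuous and strictly decreasing near $0$; this follows from the support theorem for $\cL^\rw$ (every ball around $\rId$ is charged) together with the absence of atoms of $\|\rw\|_\alpha$. If one prefers to avoid this, the computation goes through verbatim with $\fB^{-1}$ replaced by its generalised right-continuous inverse. I do not expect any further obstacle: once Lemma~\ref{lem:AndersonInequalityRP} is in hand the argument is the rough-path transcription of the standard Gaussian quantization lower bound, and no properties of rough paths beyond measurability of the lift and the definition of $d_\alpha$ are needed.
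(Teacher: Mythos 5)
Your proposal is correct and is essentially the paper's own argument: restrict the quantization integral to the complement of the union of radius-$\fB^{-1}(\log(2n))$ balls around the codebook, bound that union's mass via subadditivity and Anderson's inequality (Lemma~\ref{lem:AndersonInequalityRP}) by $n\,e^{-\fB(\varepsilon)}=\tfrac12$, and take the infimum over codebooks. Your extra remark on the well-definedness of $\fB^{-1}$ is a harmless refinement the paper leaves implicit.
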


In particular, if we additionally have that
$$
\lim_{\varepsilon \to 0} \frac{\cL^\rw[ \bB(\rId, \tfrac{\varepsilon}{2})]}{\cL^\rw[ \bB(\rId, \varepsilon)]} = 0
$$
then
$$
\fB^{-1}\Big( \log(n)\Big) \lesssim \fE_{n, r}(\cL^\rw). 
$$

\begin{proof}
Let the set $\fC_{n, r} \subset G\Omega_\alpha(\bR^d)$ be a codebook containing $n$ elements . We know that the function $\min_{\fc \in \fC_n} d_{\alpha}(\rx, \fc)^r$ will be small in the vicinity of $\fC_{n, r}$, so we focus on when it is large. Thus
\begin{align*}
\int& \min_{\fc \in \fC_n} d_{\alpha}(\rx, \fc)^r d\cL^\rw(\rx)
\geq 
\int_{\Big( \bigcup_{\fc\in \fC_{n,r}} \bB_\alpha \Big(\fc, \fB^{-1}(\log(2n))\Big) \Big)^c} \min_{\fc \in \fC_n} d_{\alpha}(\rx, \fc)^r d\cL^\rw(\rx), 
\\
\geq& \fB^{-1}\Big(\log(2n)\Big)^r \left( 1- \cL^\rw\Big[  \bigcup_{\fc\in \fC_{n, r}} \bB_\alpha \Big(\fc, \fB^{-1}(\log(2n)) \Big) \Big] \right), 
\\
\geq& \fB^{-1}\Big(\log(2n)\Big)^r \left( 1- n\cL^\rw\Big[ \bB_\alpha \Big(\rId,  \fB^{-1}(\log(2n)) \Big) \Big] \right)
\geq \frac{\fB^{-1}\Big(\log(2n)\Big)^r}{2}
\end{align*}
by applying Lemma \ref{lem:AndersonInequalityRP}. Now taking a minimum over all possible choices of codebooks, we get
$$
\fE_{n, r}(\cL^\rw)^r \gtrsim \fB^{-1}\Big(\log(2n)\Big)^r. 
$$
\end{proof}

\subsection{Convergence of weighted empirical measure}

We now turn our attention to the problem of sampling and the rate of convergence of empirical measures. In general, the quantization problem is only theoretical as obtaining the codebook and partition that attain the minimal quantization error is computationally more complex than beneficial. An empirical distribution removes this challenge at the sacrifice of optimality and the low probability event that the approximation will be far in the Wasserstein distance from the true distribution. 

\begin{definition}
\label{dfn:WeightedEmpiricalMeasure}
For an enhanced Gaussian measure $\cL^\rw$, let $(\Omega, \cF, \bP)$ be a probability space containing $n$ independent, identically distributed enhanced Gaussian processes $(\rw^i)_{i=1, ..., n}$. Let $\fs_i$ be a Voronoi partition of $WG\Omega_{\alpha}(\bR^d)$, 
$$
\fs_i \subset \Big\{ \rx\in WG\Omega_{\alpha}(\bR^d): d_\alpha( \rx, \rw^i) = \min_{j=1, ..., n} d_\alpha( \rx, \rw^i)\Big\}, 
\quad
\fs_i \cap \fs_j=\emptyset
$$
and $\bigcup_{i=1}^n \fs_i = WG\Omega_\alpha (\bR^d)$. 

Then we define the \emph{weighted empirical measure} to be the random variable $\cM: \Omega \to \cP_2\big(  WG\Omega_{\alpha}(\bR^d)\big)$
\begin{equation}
\label{eq:dfn:WeightedEmpiricalMeasure}
\cM_n = \sum_{i=1}^n \cL^\rw(\fs_i) \delta_{\rw^i}. 
\end{equation}
\end{definition}

Note that the quantities $\cL^\rw(\fs_i)$ are random and $\sum_{i=1}^n \cL^\rw(\fs_i)=1$. The weights are in general NOT uniform. We think of $\cM_n$ as a (random) approximation of the measure $\cL^\rw$ and in this section we study the random variable $\bW_{d_\alpha}^{(2)} (\cM_n, \cL^\rw)$ and its mean square convergence to 0 as $n\to \infty$. 

This next Theorem is an adaption of the method found in \cite{dereich2003link}. 

\begin{theorem}
\label{thm:EmpiricalRateCon1}
Let $\cL^W$ be a Gaussian measure satisfying Assumption \ref{assumption:GaussianRegularity} and let $\cL^\rw$ be the law of the lift to the Gaussian rough path. Let $(\Omega, \cF, \bP)$ be a probability space containing a sequence of independent, identically distributed Gaussian rough paths with law $\cL^\rw$. Let $\cM_n$ be the empirical measure with samples drawn from the measure $\cL^\rw$. Then for any $1\leq r <\infty$
\begin{equation}
\label{eq:thm:EmpiricalRateCon1}
\bE\left[ \bW_{d_\alpha}^{(r)}\Big(\cL^\rw, \cM_n\Big)^r \right]^{1/r} \lesssim \fB^{-1}\Big( \log(n)\Big). 
\end{equation}
where $\fB$ is the SBP of the measure $\cL^\rw$. 
\end{theorem}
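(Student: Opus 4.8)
The plan is to exhibit an explicit transport plan from $\cL^\rw$ to $\cM_n$ built from the Voronoi partition, and then control its cost by a random covering argument driven by the rough‑path Borell inequality (Lemma \ref{lem:BorellInequalRP}), the rough‑path Cameron–Martin inequality (Lemma \ref{lem:CamMartinFormRP}) and the metric‑entropy estimate \eqref{eq:MetricEnt1.1} of Proposition \ref{pro:MetricEnt1}; this is the rough‑path analogue of the Gaussian argument of \cite{dereich2003link}. First I would note that $\fs_i \ni \rx \mapsto \rw^i$ pushes $\cL^\rw$ forward to $\cM_n$, hence is an admissible coupling, and by the defining property of the Voronoi cells
$$
\bW_{d_\alpha}^{(r)}\big(\cL^\rw, \cM_n\big)^r \;\le\; Z_n \;:=\; \int_{WG\Omega_\alpha(\bR^d)} \min_{i=1,\dots,n} d_\alpha(\rx, \rw^i)^r \, d\cL^\rw(\rx),
$$
so by Tonelli it suffices to bound $\bE[Z_n]$. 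I record the crude bound $Z_n \le 2^{r-1}\big(\bE[\|\rw\|_\alpha^r] + \|\rw^1\|_\alpha^r\big)$ together with the fact that all moments of $\|\rw\|_\alpha$ are finite: applying Lemma \ref{lem:BorellInequalRP} with $A=\bB_\alpha(\rId,\rho)$, $\cL^\rw[A]\ge\tfrac12$, and using that $\delta_\lambda(\rk)$ is bounded in $\|\cdot\|_\alpha$, one gets the Gaussian‑type tail $\bP[\|\rw\|_\alpha > \rho + C\lambda] \le e^{-\lambda^2/2}$. This makes $\bE[Z_n]$ and $\bE[Z_n^2]$ finite and uniformly bounded in $n$, which is what is needed to treat the exceptional events below.

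Next I would localise to an \emph{effective support}. Fix $\varepsilon_n := \fB^{-1}\big(\tfrac18\log n\big)$ and $\eta_n := \sqrt{\tfrac12\log n}$; since $\fB$ is regularly varying (as used throughout the paper), $\varepsilon_n$ is of the order of $\fB^{-1}(\log n)$ and $\fB(2\varepsilon_n)\le\fB(\varepsilon_n)=\tfrac18\log n$. Set $\cN_n := T\big(\bB_\alpha(\rId,\varepsilon_n),\delta_{\eta_n}(\rk)\big)$ as in \eqref{eq:Ball+shift}. Lemma \ref{lem:BorellInequalRP} gives $\cL^\rw[\cN_n]\ge\Phi\big(\eta_n+\Phi^{-1}(e^{-\fB(\varepsilon_n)})\big)$, and the classical asymptotics $-\Phi^{-1}(e^{-x^2/2})\sim x$, together with $\eta_n^2/2>\fB(\varepsilon_n)$, yield $\cL^\rw[\cN_n^c]\le n^{-c_1}$ for some $c_1>0$. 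By \eqref{eq:MetricEnt1.1}, the compact set $\delta_{\eta_n}(\rk)$ can be covered by $N_n := \fN_{d_\alpha}(2\varepsilon_n,\delta_{\eta_n}(\rk)) \le \exp\big(\tfrac{\eta_n^2}{2}+\fB(\varepsilon_n)\big) \le n^{1/2}$ balls $\bB_\alpha(\rc_k,2\varepsilon_n)$ with centres $\rc_k\in\delta_{\eta_n}(\rk)$, and using the $T^h$‑invariance of $d_\alpha$ (Lemma \ref{lem:RPTranslation}) exactly as in the description of $\cN_n$, every point of $\cN_n$ lies within $d_\alpha$‑distance $3\varepsilon_n$ of one of the $\rc_k$.

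The last step is a union bound over these balls. Each $\rc_k$ is the lift of an RKHS element of norm $\le\eta_n$, so Lemma \ref{lem:CamMartinFormRP} gives $\cL^\rw[\bB_\alpha(\rc_k,2\varepsilon_n)]\ge e^{-\eta_n^2/2-\fB(2\varepsilon_n)}\ge n^{-1/2}$; hence on the event $G_n$ that each $\bB_\alpha(\rc_k,2\varepsilon_n)$ contains at least one sample we have $\bP[G_n^c]\le N_n(1-n^{-1/2})^n \le n^{1/2}e^{-n^{1/2}}$, while on $G_n$ every point of $\cN_n$ is within $5\varepsilon_n$ of some sample. Splitting $\bE[Z_n]=\bE[Z_n\1_{G_n}]+\bE[Z_n\1_{G_n^c}]$: the first term is at most $(5\varepsilon_n)^r + \int_{\cN_n^c}\bE\big[d_\alpha(\rx,\rw^1)^r\big]\,d\cL^\rw(\rx) \lesssim \fB^{-1}(\log n)^r + \cL^\rw[\cN_n^c]^{1/2}$ (using $\bE[d_\alpha(\rx,\rw^1)^r]\lesssim d_\alpha(\rx,\rId)^r + \bE[\|\rw\|_\alpha^r]$ and Cauchy–Schwarz), and the second is at most $\bE[Z_n^2]^{1/2}\bP[G_n^c]^{1/2} \lesssim e^{-\frac12 n^{1/2}}$. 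Since $\fB^{-1}(\log n)$ decays only like a power of $\log n$ whereas $\cL^\rw[\cN_n^c]$ and $\bP[G_n^c]$ decay (super)polynomially in $n$, both corrections are $o\big(\fB^{-1}(\log n)^r\big)$, and \eqref{eq:thm:EmpiricalRateCon1} follows on taking $r$‑th roots. The genuinely delicate point — the only place where the non‑Gaussianity of $\cL^\rw$ really enters — is calibrating $\varepsilon_n$ and $\eta_n$ so that the entropy bound \eqref{eq:MetricEnt1.1} keeps $N_n$ polynomial while the rough‑path Borell and Cameron–Martin inequalities keep both $\cL^\rw[\cN_n^c]$ and $\bP[G_n^c]$ negligible; the rest is bookkeeping of these three contributions.
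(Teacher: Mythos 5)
Your argument is correct, and it reaches the rate \eqref{eq:thm:EmpiricalRateCon1} by a genuinely different route than the paper. The paper's proof of Theorem \ref{thm:EmpiricalRateCon1} never builds a cover: after the same reduction to $Z_n=\int\min_j d_\alpha(\rx,\rw^j)^r\,d\cL^\rw(\rx)$ it passes to the layer-cake/independence representation \eqref{eq:EmpiricalRateCon1.1}, $\bE[Z_n]\le 2^r\int_0^\infty\int(1-\cL^\rw[\bB_\alpha(\rx,2\varepsilon^{1/r})])^n\,d\cL^\rw(\rx)\,d\varepsilon$, splits the $\rx$-integral along the Freidlin--Wentzell sublevel set $A_{\varepsilon,n}$ of Definition \ref{definition:Freidlin-Wentzell_Function}, lower-bounds $\cL^\rw[\bB_\alpha(\rx,2\varepsilon^{1/r})]$ pointwise on $A_{\varepsilon,n}$ via Corollary \ref{cor:CamMartinFormRP}, controls $\cL^\rw[A_{\varepsilon,n}^c]$ by Lemma \ref{lem:BorellInequalRP}, and handles $\varepsilon>c$ by moment estimates --- this is the Dereich--Fehringer--Matoussi--Scheutzow scheme transplanted to rough paths. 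You instead localise once and for all to the Borell enlargement $\cN_n=T(\bB_\alpha(\rId,\varepsilon_n),\delta_{\eta_n}(\rk))$, cover it through the entropy bound \eqref{eq:MetricEnt1.1} of Proposition \ref{pro:MetricEnt1} with centres in $\delta_{\eta_n}(\rk)$ (to keep the centres in the lifted Cameron--Martin ball you should really invoke the maximal $2\varepsilon_n$-separated set from the proof of Proposition \ref{pro:MetricEnt1}, or re-centre at the cost of a constant in the radius --- a cosmetic point), and run a union bound so that with overwhelming probability every point of $\cN_n$ is within $5\varepsilon_n$ of a sample; in effect you prove Theorem \ref{thm:EmpiricalRateCon1} by the strategy the paper reserves for the \emph{uniform} empirical measure in Theorem \ref{thm:EmpiricalRateCon2} (there packaged via Boissard's Lemma \ref{lemma:Boissard}). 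The calibration $\varepsilon_n=\fB^{-1}(\tfrac18\log n)$, $\eta_n=\sqrt{\tfrac12\log n}$ makes the cover of size $n^{3/8}$, each covering ball of mass at least $n^{-3/8}$ by Lemma \ref{lem:CamMartinFormRP}, and $\cL^\rw[\cN_n^c]$ polynomially small, so all corrections are $o(\fB^{-1}(\log n)^r)$ exactly as you say; the moment bounds $\bE[\|\rw\|_\alpha^{2r}]<\infty$ you extract from Lemma \ref{lem:BorellInequalRP} are the generalized Fernique theorem (here one also needs, as in Lemma \ref{lemma:RPTranslation1}, that $\cH$ embeds into the complementary variation space under Assumption \ref{assumption:GaussianRegularity}, so that $\delta_\lambda(\rk)$ is indeed bounded in $\|\cdot\|_\alpha$), and the paper uses the same facts tacitly. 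What your route buys is a stronger, high-probability covering statement (all of the effective support is $5\varepsilon_n$-close to the sample cloud), which would also yield deviation bounds; what the paper's route buys is that it stays entirely at the level of expectations and distribution functions, needing no covering construction and no discussion of where the centres live.
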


\begin{proof}
By the definition of the Wasserstein distance, we have
\begin{align*}
\bW_{d_\alpha}^{(r)}\Big( \cL^\rw, \cM_n\Big)^r =& \inf_{\gamma \in \cP\big(WG\Omega_\alpha(\bR^d) \times WG\Omega_\alpha(\bR^d)\big)} \int_{WG\Omega_\alpha(\bR^d) \times WG\Omega_\alpha(\bR^d)} d_\alpha( \rx, \ry)^r \gamma(d\rx, d\ry)
\\
\leq& \int_{WG\Omega_\alpha(\bR^d)} \min_{j=1, ..., n} d_\alpha\Big(\rx, \rw^j\Big)^r d\cL^{\rw}(\rx)
\end{align*}

Thus taking expectations, we have
\begin{align*}
&\bE\left[ \bW_{d_\alpha}^{(r)} \Big( \cL^\rw, \cM_n\Big)^r \right] 
\\
&\leq \int_{WG\Omega_{\alpha}(\bR^d)^{\times n}} \left( \int_{WG\Omega_{\alpha}(\bR^d)} \min_{j=1, ..., n} d_\alpha\Big( \rx, \rw^j\Big)^r d\cL^\rw(\rx) \right)d\left( \cL^\rw\right)^{\times n} (\rw^1, ..., \rw^n)
\end{align*}

A change in the order of integration yields
\begin{equation}
\label{eq:EmpiricalRateCon1.1}
\bE\left[ \bW_{d_\alpha}^{(r)} \Big( \cL^\rw, \cM_n\Big)^r \right] 
\leq 
2^r \int_0^\infty \int_{WG\Omega_{\alpha}(\bR^d)} \left( 1-\cL^\rw\Big[ \bB_\alpha( \rx, 2\varepsilon^{1/r})\Big] \right)^n d\cL^\rw(\rx) d\varepsilon. 
\end{equation}

Firstly, choose $n$ large enough so that for any $0<\varepsilon<1$, $\sqrt{ \log(n) } > \Phi^{-1}\Big( \cL^\rw \Big[ \bB_\alpha(1, \varepsilon^{1/r})\Big] \Big)$. Secondly, choose $c>0$ such that $\cL^\rw\Big[ \bB_\alpha(1, c^{1/r}) \Big] \leq \Phi\Big(\tfrac{-1}{\sqrt{2\pi}} \Big)$. 

For $n$ and $\varepsilon$ fixed, we label the set
$$
A_{\varepsilon, n}:= \left\{ \rx\in WG\Omega_{\alpha}(\bR^d): I(\rx, \varepsilon) \leq \frac{\left( \tfrac{\sqrt{\log(n)}}{3} - \Phi^{-1}\Big( \cL^\rw\big[ \bB_\alpha(1, \varepsilon^{1/r})\big] \Big)\right)^2}{2} \right\}
$$
where $I(\rx, \varepsilon)$ was introduced in Definition \ref{definition:Freidlin-Wentzell_Function}. 

This can equivalently be written as
$$
A_{\varepsilon, n}:= \left\{ T^h[\rx]\in WG\Omega_{\alpha}(\bR^d): h\in \left( \tfrac{\sqrt{\log(n)}}{3} - \Phi^{-1} \Big( \cL^\rw\Big[ \bB_\alpha(\rId, \varepsilon^{1/r}) \Big]\Big)\right)\cK, \quad \rx\in \bB_\alpha(\rId, \varepsilon) \right\}
$$

Then we divide the integral in Equation \ref{eq:EmpiricalRateCon1.1} into
\begin{align}
\label{eq:EmpiricalRateCon2.1}
\bE\left[ \bW_{d_\alpha}^{(r)} \Big( \cL^\rw, \cM_n\Big)^r \right] 
\leq& 
2^r \int_0^c \int_{A_{\varepsilon, n}} \left( 1-\cL^\rw\Big[ \bB_\alpha( \rx, 2\varepsilon^{1/r})\Big] \right)^n d\cL^\rw(\rx) d\varepsilon
\\
\label{eq:EmpiricalRateCon2.2}
&+2^r \int_0^c \int_{A_{\varepsilon, n}^c} \left( 1-\cL^\rw\Big[ \bB_\alpha( \rx, 2\varepsilon^{1/r})\Big] \right)^n d\cL^\rw(\rx) d\varepsilon
\\
\label{eq:EmpiricalRateCon2.3}
&+2^r \int_c^\infty \int_{ WG\Omega_{\alpha}(\bR^d) } \left( 1-\cL^\rw\Big[ \bB_\alpha( \rx, 2\varepsilon^{1/r})\Big] \right)^n d\cL^\rw(\rx) d\varepsilon. 
\end{align}

Firstly, using Corollary \ref{cor:CamMartinFormRP}
\begin{align*}
\eqref{eq:EmpiricalRateCon2.1} \leq& 2^r\int_0^c \int_{A_{\varepsilon, n}} \left( 1- \exp\Big( - I(\rx, \varepsilon) - \fB(\varepsilon^{1/r})\Big) \right) d\cL^\rw (\rx) d\varepsilon
\\
\leq& 2^r \int_0^c \left( 1- \exp\left( -\Big( \tfrac{\sqrt{\log(n)}}{3} + \sqrt{2 \fB(\varepsilon^{1/r}) } \Big)^2 \right) \right)^n d\varepsilon
\\
\leq& 2^r\int_0^{\fB^{-1}\Big(\tfrac{\log(n)}{8}\Big)^r} d\varepsilon + 2^r \int_{\fB^{-1}\Big(\tfrac{\log(n)}{8}\Big)^r}^{ \fB^{-1}\Big(\tfrac{\log(n)}{8}\Big)^r \vee c} \left( 1- \exp\left( -\Big( \tfrac{\sqrt{\log(n)}}{3} + \sqrt{2 \fB(\varepsilon^{1/r}) } \Big)^2 \right)\right)^n d\varepsilon
\\
\leq& 2^r \fB^{-1}\Big(\tfrac{\log(n)}{8}\Big)^r + 2^r \left( 1- \exp\left( -\frac{25 \log(n)}{36} \right)\right)^n
\end{align*}
since $c<1$. 

Now, since $\log\Big( \tfrac{1}{\varepsilon}\Big) = o\Big( \fB(\varepsilon)\Big)$ as $\varepsilon \to 0$, we have that $\forall p, q$ that $\exp\Big( \tfrac{-n}{p}\Big) = o\Big( \fB^{-1}(n)^q \Big)$. Therefore
\begin{align*}
\left( 1- \exp\left( -\frac{25 \log(n)}{36} \right)\right)^n =& \Big( 1 - \frac{1}{n^{25/36} } \Big)^n
\\
\leq& \exp\Big(-n^{11/36}\Big) = o\left( \exp\Big( \frac{-\log(n)}{2}\Big) \right). 
\end{align*}

Next, applying Lemma \ref{lem:BorellInequalRP} to
\begin{align*}
\cL^\rw \Big[ A_{\varepsilon, n}^c \Big] =& 1 - \cL^\rw \Big[ A_{\varepsilon, N} \Big]
\\
\leq& 1 - \Phi\left( \Phi^{-1}\Big( \cL^\rw\Big[ \bB_\alpha(\rId, \varepsilon^{1/r}) \Big]\Big) + \frac{\sqrt{\log(n)}}{3} - \Phi^{-1}\Big( \cL^\rw\Big[ \bB_\alpha(\rId, \varepsilon^{1/r}) \Big]\Big) \right)
\\
=& 1 - \Phi\left( \frac{\sqrt{\log(n)}}{3} \right) \leq \exp\left( - \frac{\log(n)}{18} \right). 
\end{align*}

Third and finally, we make the substitution
\begin{align}
\nonumber
\eqref{eq:EmpiricalRateCon2.3} \leq& 2^r \int_{ WG\Omega_{\alpha}(\bR^d) } \int_0^\infty \cL^\rw\Big[ \bB^c( \rx, 2\varepsilon^{1/r})\Big] d\varepsilon \cdot  \cL^\rw\Big[ \bB^c( \rx, 2 c^{1/r})\Big] ^{n-1}  d\cL^\rw(\rx) 
\\
\label{eq:EmpiricalRateCon3.1}
\leq& \bE\Big[ \| \rw\|_\alpha^r \Big] \int_{ WG\Omega_{\alpha}(\bR^d) } \cL^\rw\Big[ \bB^c( \rx, 2 c^{1/r})\Big] ^{n-1}  d\cL^\rw(\rx)
\\
\label{eq:EmpiricalRateCon3.2}
&+ \int_{ WG\Omega_{\alpha}(\bR^d) } \| \rx \|_{\alpha}^r \cdot \cL^\rw\Big[ \bB^c( \rx, 2 c^{1/r})\Big] ^{n-1}  d\cL^\rw(\rx)
\end{align}
to account for the integral over $(c, \infty)$. Next, we partition this integral over $A_{c, n}$ and $A_{c, n}^c$. Arguing as before, we have
\begin{align*}
\int_{ A_{c, n} }& \left( 1 - \cL^\rw\Big[ \bB_\alpha( \rx, 2 c^{1/r})\Big] \right)^{n-1}  d\cL^\rw(\rx)
\\
&\leq \left( 1 - \exp\Big( - \frac{25 \log(n)}{36} \Big) \right)^{n-1} \leq o\Big( \fB^{-1} \Big(\log(n)\Big)^q \Big)
\end{align*}
and
\begin{align*}
\int_{ A_{c, n}^c }& \left( 1 - \cL^\rw\Big[ \bB_\alpha( \rx, 2 c^{1/r})\Big] \right)^{n-1}  d\cL^\rw(\rx)
\\
&\leq \sup_{0<c} \cL^\rw \Big[ A_{c, n}^c\Big] \leq o\Big( \fB^{-1}\Big( \log(n)\Big)^q \Big). 
\end{align*}
for any choice of $q$. Therefore
\begin{align*}
\eqref{eq:EmpiricalRateCon3.1} \leq& o\Big( \fB^{-1}\Big( \log(n)\Big)^q \Big)
\\
\eqref{eq:EmpiricalRateCon3.2} \leq& \bE\Big[ \| \rw \|_\alpha^{2r} \Big]^{1/2} \cdot \left( \int_{ WG\Omega_{\alpha}(\bR^d) } \cL^\rw \Big[ \bB_\alpha^c( \rx, 2c^{1/r}) \Big]^{2(n-1)} d\cL^\rw(\rx) \right)^{1/2}
\\
\leq& o\Big( \fB^{-1}\Big( \log(n)\Big)^q \Big)
\end{align*}
Thus
$$
\eqref{eq:EmpiricalRateCon1.1} \leq 2^r \fB^{-1}\Big(\tfrac{\log(n)}{8}\Big)^r +  o\Big( \fB^{-1}\Big( \log(n)\Big)^r \Big). 
$$
\end{proof}

\subsection{Convergence of (non-weighted) empirical measure}

In this Section, we study the empirical measure with uniform weights. This is the form that the empirical distribution more commonly takes. 

\begin{definition}
\label{dfn:EmpiricalMeasure}
For enhanced Gaussian measure $\cL^\rw$, let $(\Omega, \cF, \bP)$ be a probability space containing $n$ independent, identically ditributed enhanced Gaussian processes $(\rw^i)_{i=1, ..., n}$. 

Then we define the empirical measure to be the random variable $\cE_n: \Omega \to \cP_2\big(  WG\Omega_{\alpha}(\bR^d)\big)$
\begin{equation}
\label{eq:dfn:EmpiricalMeasure}
\cE_n = \frac{1}{n} \sum_{i=1}^n \delta_{\rw^i}. 
\end{equation}
\end{definition}

Before, we state our Theorem for the rate of convergence of empirical measures, we will need the following Lemma. 

\begin{lemma}
\label{lemma:Boissard}
Let $\eta>0$. Let $(E, d)$ be a metric space and let $\mu \in \cP(E)$ with $\supp(\mu) \subset E$ such that $\fN(t, \supp(\mu)) < \infty$ and let 
$$
\Delta_\mu:= \max \Big\{ d(x, y): x, y \in \supp(\mu) \Big\}. 
$$
Then $\exists c>0$ such that
$$
\bE\Big[ \bW_{d_\alpha}^{(r)}( \cE_n, \mu) \Big] \leq c \bigg( \eta + n^{-1/2r} \int_\eta^{\Delta_\mu/4} \fN\Big( z, \supp(\mu) \Big) dz \bigg). 
$$
\end{lemma}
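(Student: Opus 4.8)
The plan is a multi-scale chaining argument of Boissard type (with the $\bW^{(r)}$-refinement of Boissard--Le Gouic), carried out abstractly in $(E,d)$; write $\bW^{(r)}:=\bW^{(r)}_{d}$, let $r\ge1$, and let $\cE_n=\tfrac1n\sum_{i=1}^n\delta_{X_i}$ with $X_1,\dots,X_n$ i.i.d.\ of law $\mu$, so $\cE_n$ is almost surely supported in $K:=\overline{\supp(\mu)}$. Since $\fN(t,\supp(\mu))<\infty$ for $t>0$, the set $K$ is totally bounded (in particular $\Delta_\mu<\infty$), so for each $k\ge0$ I would fix a finite Borel partition $\cP_k$ of $K$ into cells of diameter $\le\Delta_\mu 2^{-k}$, arranged to be nested ($\cP_{k+1}$ refines $\cP_k$), with $\cP_0=\{K\}$ and $|\cP_k|\le\fN(\tfrac{\Delta_\mu}{2}2^{-k},\supp(\mu))$ — take a minimal $\tfrac{\Delta_\mu}{2}2^{-k-1}$-net of $K$, form the associated Voronoi-type partition, and intersect it with $\cP_k$. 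Pick a point $c_A\in K$ in each cell and let $p_k\colon K\to K$ send each $x$ to the centre of its $\cP_k$-cell, so that $p_k$ is Borel and $d(x,p_k(x))\le\Delta_\mu 2^{-k}$.

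The one structural ingredient is a hierarchical coupling bound: for any probability measures $\nu_1,\nu_2$ on $K$,
\[
\bW^{(r)}(\nu_1,\nu_2)^{r}\ \le\ \sum_{k\ge1}\big(\Delta_\mu 2^{-(k-1)}\big)^{r}\sum_{A\in\cP_k}\big|\nu_1(A)-\nu_2(A)\big|,
\]
obtained by building a transport plan that matches mass greedily within the cells of $\cP_1$, then within those of $\cP_2$, and so on, moving at step $k$ only the mass on which the two measures disagree and only across a $\cP_{k-1}$-cell; truncating the construction at a finite level $N$ leaves an unmatched remainder transported within $\cP_N$-cells at cost $\le(\Delta_\mu 2^{-N})^{r}$, which vanishes as $N\to\infty$ (or is retained as the $O(\eta^r)$ error below). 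Setting up this inequality cleanly — specifying the coupling and keeping track of which mass is moved at which scale — is the main, though classical, obstacle; everything after it is estimation.

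Applying this with $\nu_1=\cE_n$, $\nu_2=\mu$, truncating at the level $N$ with $\Delta_\mu 2^{-N}\le\eta<\Delta_\mu 2^{-N+1}$, and taking expectations termwise, I would use the binomial variance estimate $\bE|\cE_n(A)-\mu(A)|\le\tfrac12\sqrt{\mu(A)(1-\mu(A))/n}\le\tfrac{1}{2\sqrt n}$ together with $|\cP_k|\le\fN(\tfrac{\Delta_\mu}{2}2^{-k},\supp(\mu))$ to obtain
\[
\bE\big[\bW^{(r)}(\cE_n,\mu)^{r}\big]\ \lesssim\ \eta^{r}+\frac{1}{\sqrt n}\sum_{k=1}^{N}\big(\Delta_\mu 2^{-(k-1)}\big)^{r}\,\fN\big(\tfrac{\Delta_\mu}{2}2^{-k},\supp(\mu)\big).
\]
Then I pass to $\bE[\bW^{(r)}(\cE_n,\mu)]$ by Jensen (concavity of $t\mapsto t^{1/r}$ for $r\ge1$), use subadditivity of $t\mapsto t^{1/r}$ to peel off $\eta$ and the factor $n^{-1/(2r)}$, and then $(\sum_k a_k)^{1/r}\le\sum_k a_k^{1/r}$ and $\fN^{1/r}\le\fN$ (both valid since $0<1/r\le1$ and $\fN\ge1$) to reach $\eta+n^{-1/(2r)}\sum_{k\le N}\Delta_\mu 2^{-k}\fN(\tfrac{\Delta_\mu}{2}2^{-k},\supp(\mu))$ up to constants. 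Finally, since $z\mapsto\fN(z,\supp(\mu))$ is non-increasing, each dyadic term is comparable to the integral of $\fN$ over the matching dyadic annulus, so the sum is $\lesssim\int_\eta^{\Delta_\mu/4}\fN(z,\supp(\mu))\,dz$; absorbing all multiplicative constants into a single $c>0$ gives the stated bound.
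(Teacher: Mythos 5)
The paper itself offers no proof of this lemma --- it is quoted directly from Boissard--Le Gouic \cite{boissard2014mean} --- and your chaining skeleton (nested dyadic partitions, the hierarchical transport bound, the binomial variance estimate, Jensen plus subadditivity of $t\mapsto t^{1/r}$, and the dyadic-sum-to-integral comparison using monotonicity of $\fN$) is essentially the argument of that reference, so the route is the right one. However, one step fails as written: the construction of the nested partitions. If $\cP_{k+1}$ is obtained by intersecting the Voronoi partition of a minimal net with the previous partition $\cP_k$, then nestedness holds but the cardinality bound you assert, $|\cP_k|\le\fN\big(\tfrac{\Delta_\mu}{2}2^{-k},\supp(\mu)\big)$, does not: the common refinement can have up to $|\cP_k|\cdot\fN(\cdot)$ nonempty cells, so this construction only gives $|\cP_k|\le\prod_{j\le k}\fN\big(\tfrac{\Delta_\mu}{2}2^{-j},\supp(\mu)\big)$ (two transversally shifted grids in the unit square already exhibit the product behaviour). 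Since your expectation step uses $\sum_{A\in\cP_k}\bE\big|\cE_n(A)-\mu(A)\big|\le|\cP_k|/(2\sqrt n)$, the error propagates: you would obtain an entropy integral of a product (or power) of covering numbers, which is not dominated by $\int_\eta^{\Delta_\mu/4}\fN(z,\supp(\mu))\,dz$ for a general $\mu$. The standard repair --- and what the cited proof effectively does --- is to build the hierarchy from the nets themselves: fix minimal nets $S_k$ at scale $\tfrac{\Delta_\mu}{2}2^{-k}$, assign each point of $S_{k+1}$ to a nearest point of $S_k$, and define the level-$k$ cells as preimages of the resulting tree labels; then $|\cP_k|\le|S_k|=\fN\big(\tfrac{\Delta_\mu}{2}2^{-k},\supp(\mu)\big)$, at the price that level-$k$ cells have diameter of order $\sum_{j\ge k}\Delta_\mu2^{-j}\le2\Delta_\mu2^{-k}$, which only changes the constants in your chaining inequality.

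A second, more minor mismatch: with your truncation $\Delta_\mu2^{-N}\le\eta<\Delta_\mu2^{-N+1}$, the finest dyadic term is $\Delta_\mu2^{-N}\,\fN\big(\tfrac{\Delta_\mu}{2}2^{-N},\supp(\mu)\big)$, whose natural comparison interval $\big[\tfrac{\Delta_\mu}{4}2^{-N},\tfrac{\Delta_\mu}{2}2^{-N}\big]$ lies below $\eta$, so it is not controlled by $\int_\eta^{\Delta_\mu/4}\fN$ and not by $c\,\eta$ either when $\fN$ blows up quickly near $0$; choosing instead the smallest $N$ with $\Delta_\mu2^{-N}\le4\eta$ keeps the leftover transport cost of order $\eta^r$ and places every comparison interval inside $[\eta,\Delta_\mu/4]$. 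Apart from these two points the estimation chain is sound: the hierarchical coupling bound, $\bE|\cE_n(A)-\mu(A)|\le\tfrac{1}{2\sqrt n}$, Jensen, $(\sum_k a_k)^{1/r}\le\sum_k a_k^{1/r}$, $\fN^{1/r}\le\fN$, and the monotonicity of $z\mapsto\fN(z,\supp(\mu))$ are all used correctly, and they do deliver the stated (deliberately crude, since $\fN$ appears without a root) bound once the partition construction is fixed.
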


For a proof of Lemma \ref{lemma:Boissard}, see \cite{boissard2014mean}. 

\begin{theorem}
\label{thm:EmpiricalRateCon2}
Let $\cL^W$ be a Gaussian measure satisfying Assumption \ref{assumption:GaussianRegularity} and let $\cL^\rw$ be the law of the lift to the Gaussian rough path. Let $(\Omega, \cF, \bP)$ be a probability space containing a sequence of independent, identically distributed Gaussian rough paths with law $\cL^\rw$. Let $\cE_n$ be the empirical measure with samples drawn from the measure $\cL^\rw$. Then for any $1\leq r <\infty$
\begin{equation}
\label{eq:thm:EmpiricalRateCon2}
\bE\bigg[ \bW_{d_\alpha}^{(r)}\Big(\cL^\rw, \cE_n\Big)^r \bigg] 
\lesssim
\fB^{-1}\Big( \log(n)\Big). 
\end{equation}
\end{theorem}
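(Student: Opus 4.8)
Since the support of $\cL^\rw$ is neither compact nor of finite metric entropy, Lemma~\ref{lemma:Boissard} cannot be applied to $\mu=\cL^\rw$ as it stands; the plan is to truncate $\cL^\rw$ to a small neighbourhood of a dilated lifted Cameron--Martin ball $\delta_\lambda(\rk)$ — whose metric entropy \emph{is} controlled, by Proposition~\ref{pro:MetricEnt1} — and then to glue. Fix small parameters $\gamma,c'>0$ (to be chosen at the end) and set
$$
\varepsilon_0:=\fB^{-1}(\gamma\log n),\qquad \lambda:=\sqrt{2\fB(\varepsilon_0)}+\sqrt{2c'\log n}=\big(\sqrt{2\gamma}+\sqrt{2c'}\big)\sqrt{\log n}.
$$
Let $B:=\bB_\alpha(\rId,\varepsilon_0)+\delta_\lambda(\rk)=\{T^{\lambda h}(\rx):h\in\cK,\ \rx\in\bB_\alpha(\rId,\varepsilon_0)\}$, which by $T^h$-invariance of $d_\alpha$ (Lemma~\ref{lem:RPTranslation}) is the $\varepsilon_0$-neighbourhood of the compact set $\delta_\lambda(\rk)$. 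Borell's rough path inequality (Lemma~\ref{lem:BorellInequalRP}) gives $\cL^\rw[B^c]\le 1-\Phi\big(\lambda+\Phi^{-1}(\cL^\rw[\bB_\alpha(\rId,\varepsilon_0)])\big)$, and since $\cL^\rw[\bB_\alpha(\rId,\varepsilon_0)]=e^{-\fB(\varepsilon_0)}=n^{-\gamma}$ and $\Phi^{-1}(n^{-\gamma})\sim-\sqrt{2\gamma\log n}$, this is $\le n^{-c'/2}$ for $n$ large. Moreover $\overline B$ is compact with $\mathrm{diam}_{d_\alpha}(\overline B)\lesssim\lambda$ (Lemma~\ref{lemma:RPTranslation1} and the standard embedding of $\cK$ into the $q$-variation space), and since a $(z-\varepsilon_0)$-net of $\delta_\lambda(\rk)$ is a $z$-net of $\overline B$, Equation~\eqref{eq:MetricEnt1.1} gives, for $z\ge3\varepsilon_0$,
$$
\fN_{d_\alpha}(z,\overline B)\le\fN_{d_\alpha}\big(z-\varepsilon_0,\delta_\lambda(\rk)\big)\le\exp\!\Big(\tfrac{\lambda^2}{2}+\fB(\varepsilon_0)\Big),
$$
a finite bound expressed through $\fB$.

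Next I would decompose. Let $N:=\#\{i\le n:\rw^i\in B\}$; conditionally on $N$, the samples in $B$ are i.i.d.\ with law $\mu_B:=\cL^\rw(\,\cdot\cap B)/\cL^\rw[B]$ (supported in $\overline B$) and the rest are i.i.d.\ with law $\cL^\rw(\,\cdot\cap B^c)/\cL^\rw[B^c]$. I would construct a coupling of $\cL^\rw$ with $\cE_n$ in three steps: (a) transport $\cL^\rw[B]\,\mu_B$ onto the uniform empirical measure $\cE_N^{\mathrm{in}}$ of the samples inside $B$; (b) transport the residual mass $\cL^\rw|_{B^c}$ onto the samples outside $B$; (c) a final rebalancing between inside and outside atoms to reach uniform weight $1/n$. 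For (a) I would apply Lemma~\ref{lemma:Boissard} to the cost $d_\alpha^r$ (equivalently controlling $\bW_{d_\alpha}^{(r)}(\mu_B,\cE_N^{\mathrm{in}})^r$) with $\eta=3\varepsilon_0$ and $\Delta_{\mu_B}\lesssim\lambda$: the $\eta$-term contributes $\lesssim\varepsilon_0^r$, while the integral term contributes $\lesssim n^{-\kappa}\,\lambda^r\exp\!\big(\tfrac{\lambda^2}{2}+\fB(\varepsilon_0)\big)$ for some $\kappa=\kappa(r)>0$, using the displayed entropy bound and that $\fB$ is decreasing. For (b), $\cL^\rw[B^c]\le n^{-c'/2}$ and $\bE\big[\1_{B^c}(\rw)\|\rw\|_\alpha^r\big]\le\cL^\rw[B^c]^{1/2}\,\bE[\|\rw\|_\alpha^{2r}]^{1/2}\lesssim n^{-c'/4}$, with $\bE[\|\rw\|_\alpha^{2r}]<\infty$ the integrability of the Gaussian rough path (\cite{cass2013integrability}), so step (b) costs $\lesssim n^{-c'/4}$. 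For (c), $\bE\big[|N/n-\cL^\rw[B]|^2\big]\le\tfrac1{4n}$ and $\bE\big[(\max_i\|\rw^i\|_\alpha)^{2r}\big]\lesssim\mathrm{polylog}(n)$ (maximum of $n$ Gaussian-type variables), so by Cauchy--Schwarz step (c) costs $\lesssim n^{-1/2}\,\mathrm{polylog}(n)$.

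Now I would fix the constants. Using that $\fB$ is regularly varying at infinity (as exploited in the proofs of Theorems~\ref{thm:SmallBallMetricEnt1} and~\ref{thm:EmpiricalRateCon1}), one has $\varepsilon_0=\fB^{-1}(\gamma\log n)\lesssim\fB^{-1}(\log n)$, so the $\eta$-term is $\lesssim\fB^{-1}(\log n)^r$; and with $\fB(\varepsilon_0)=\gamma\log n$ and $\lambda^2/2=(\gamma+c'+2\sqrt{\gamma c'})\log n$ the integral term is $\lesssim n^{-\kappa+2\gamma+c'+2\sqrt{\gamma c'}}\,\mathrm{polylog}(n)$, which tends to $0$ polynomially once $\gamma,c'$ are small enough that the exponent is negative; the remaining contributions are $O(n^{-c'/4})+O(n^{-1/2}\mathrm{polylog}(n))$. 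Summing the three contributions,
$$
\bE\Big[\bW_{d_\alpha}^{(r)}\big(\cL^\rw,\cE_n\big)^r\Big]\ \lesssim\ \varepsilon_0^r+o\big(\fB^{-1}(\log n)^r\big)\ \lesssim\ \fB^{-1}(\log n)^r,
$$
which gives \eqref{eq:thm:EmpiricalRateCon2} (a fortiori in the form stated, since $\fB^{-1}(\log n)\to0$).

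The hard part is the truncation itself: one must produce a \emph{compact} set capturing all but a polynomially small fraction of $\cL^\rw$ — which, because dilated lifted Cameron--Martin balls are $\cL^\rw$-null, forces one to fatten by $\bB_\alpha(\rId,\varepsilon_0)$ and to control the tail via Borell's rough path inequality — while simultaneously keeping its metric entropy expressed through $\fB$ via Proposition~\ref{pro:MetricEnt1}; one then has to tune the three scales $\lambda$, $\varepsilon_0$ and $n$ so that the Boissard integral term, the tail term and the weight-rebalancing term are all negligible next to $\fB^{-1}(\log n)$. Everything else — the conditioning on $N$, the three-step coupling, and the moment estimates — is routine bookkeeping.
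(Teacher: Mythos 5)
Your proposal is correct in substance and shares the paper's skeleton --- truncation to the fattened, dilated, lifted Cameron--Martin ball $T\big(\bB_\alpha(\rId,\varepsilon),\delta_\lambda(\rk)\big)$, Borell's rough path inequality (Lemma \ref{lem:BorellInequalRP}) for the tail mass, Proposition \ref{pro:MetricEnt1} for the entropy of that set, Lemma \ref{lemma:Boissard} for the empirical part, and the same tuning $\varepsilon\approx\fB^{-1}(c\log n)$, $\lambda\approx\sqrt{\log n}$ --- but it diverges in how the truncation error is charged. The paper splits $\bW^{(r)}_{d_\alpha}(\cL^\rw,\cE_n)$ by the triangle inequality through the conditioned measure $\tilde\cL^\rw$ and bounds $\bW^{(r)}_{d_\alpha}(\cL^\rw,\tilde\cL^\rw)$ with the Talagrand transportation-cost inequality for Gaussian rough paths (\cite{riedel2017transportation}) combined with Borell, then applies Boissard to $(\tilde\cL^\rw,\cE_n)$; you instead condition on the number $N$ of samples landing in the truncation set and build an explicit three-step coupling (Boissard for the inside conditional law, Cauchy--Schwarz with Gaussian rough path moment bounds for the outside mass, a variance estimate for the weight rebalancing). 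Your route is more elementary --- it dispenses with the transportation inequality entirely --- and it also deals honestly with the fact that the samples are drawn from $\cL^\rw$ rather than from the truncated measure, a point the paper's direct application of Lemma \ref{lemma:Boissard} to $\cE_n$ and $\tilde\cL^\rw$ passes over; the price is heavier bookkeeping (the event $N\ge n/2$ and the crude bound on its complement should be made explicit, though this is indeed routine). Two cosmetic corrections: the closure of your set $B$ is \emph{not} compact (an $\varepsilon_0$-fattening of a compact set in an infinite-dimensional space never is), but this does not matter since Lemma \ref{lemma:Boissard} only uses finite diameter and finite covering numbers at scales $z\ge\eta=3\varepsilon_0$, which your bound $\fN_{d_\alpha}(z,B)\le\fN_{d_\alpha}(z-\varepsilon_0,\delta_\lambda(\rk))$ supplies; and your $\eta$-term manipulation tacitly uses the $\bW^r$-version of Boissard's bound, the same looseness the paper itself indulges in.
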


Typically, the measure $\cE_n$ is easier to work with since one does not have to calculate the (random) weights associated to each sample. However, it is a less accurate approximation than the weighted empirical so the fact that it converges at the same rate is noteworthy. 

\begin{proof}
Using the same construction as in Equation \eqref{eq:Ball+shift}, we consider the set
$$
\rS:= T\Big( \bB_\alpha(\rId, \varepsilon), \delta_\lambda(\rk) \Big)
$$
and denote the conditional measure $\tilde{\cL}^\rw = \tfrac{1}{\cL^\rw[\rS]} \1_{\rS} \cL^\rw$. An application of Lemma \ref{lemma:Boissard} gives
\begin{align}
\nonumber
\bE\bigg[ \bW_{d_\alpha}^{(r)}\Big(\cL^\rw, \cE_n\Big)^r \bigg]^{1/r}
\leq&
\bE\bigg[ \bW_{d_\alpha}^{(r)}\Big(\cL^\rw, \tilde{\cL}^\rw \Big)^r \bigg]^{1/r} + 
\bE\bigg[ \bW_{d_\alpha}^{(r)}\Big(\cE_n, \tilde{\cL}^\rw \Big)^r \bigg]^{1/r}
\\
\label{eq:thm:EmpiricalRateCon2-1}
\leq& \bE\bigg[ \bW_{d_\alpha}^{(r)}\Big(\cL^\rw, \tilde{\cL}^\rw \Big)^r \bigg]^{1/r}
+
2c\varepsilon + cn^{-1/4} \int_{2\varepsilon}^{\tfrac{\sigma\lambda + \varepsilon}{2}} \fN(z, \rS)^{1/4} dz
\end{align}
where
$$
\sigma: = \sup_{s, t \in [0,T]} \frac{\bE\Big[ |W_{s, t}|^2 \Big]^{1/2}}{|t-s|^\alpha}. 
$$
An application of the Talagrand inequality for rough paths (see for example \cite{riedel2017transportation} and Lemma \ref{lem:BorellInequalRP} gives that
$$
\bE\bigg[ \bW_{d_\alpha}^{(r)}\Big(\cL^\rw, \tilde{\cL}^\rw \Big)^r \bigg]^{1/r} \leq \sqrt{2\sigma^2} \sqrt{-\log\bigg( \Phi\Big( \lambda + \Phi^{-1}(e^{-\fB(\varepsilon)})\Big) \bigg) }. 
$$
Next, using the same estimates for $\Phi$ as in the proof of Theorem \ref{thm:SmallBallMetricEnt1}, we get that
$$
\bE\bigg[ \bW_{d_\alpha}^{(r)}\Big(\cL^\rw, \tilde{\cL}^\rw \Big)^r \bigg]^{1/r} \leq 
2\sigma \exp \bigg( \tfrac{-1}{4} \Big( \lambda - \sqrt{2 \fB(\varepsilon)} \Big)^2 \bigg)
$$
provided $\fB(\varepsilon)\geq \log(2)$ and $\lambda \geq \sqrt{2 \fB(\varepsilon)}$. 

Secondly, 
\begin{align*}
\frac{1}{n^{1/4}} \int_{2\varepsilon}^{\tfrac{\sigma\lambda + \varepsilon}{2}} \fN(z, \rS)^{1/4} dz 
\leq& 
\frac{\sigma\lambda + \varepsilon}{2n^{1/4}} \cdot \fN(2\varepsilon, \rS)^{1/4} \cdot 
\leq \frac{\sigma \lambda + \varepsilon}{2 n^{1/4}} \cdot \fN( \varepsilon , \lambda \rk). 
\end{align*}

By Proposition \ref{pro:MetricEnt1}, we have that
$$
\fN(\varepsilon, \lambda \rk) \leq \exp\Big( \tfrac{\lambda^2}{2} + \fB\big(\tfrac{\varepsilon}{2} \big) \Big) \leq \exp\Big( \tfrac{\lambda^2}{2} + \kappa\cdot \fB(\varepsilon) \Big). 
$$
Now we choose
$$
\varepsilon = \fB^{-1}\Big( \tfrac{1}{6+\kappa} \cdot \log(n)\Big), 
\quad 
\lambda = 2\sqrt{\tfrac{2}{6+\kappa}\cdot \log(n)}
$$
and Equation \eqref{eq:thm:EmpiricalRateCon2-1} becomes
\begin{align*}
\bE\bigg[ \bW_{d_\alpha}^{(r)}\Big(\cL^\rw, \cE_n\Big)^r \bigg]^{1/r}
\leq
c\bigg( \fB^{-1}\Big( \tfrac{1}{6+\kappa} \cdot \log(n)\Big) + \Big( 1 + \sigma\sqrt{\tfrac{1}{6+\kappa} \log(n)} \Big) n^{\tfrac{-1}{12+2\kappa}} \bigg)
\end{align*}
provided $n$ is large enough. As $n\to \infty$, the lead term will be $\fB^{-1}\Big(\log(n)\Big)$. 
\end{proof}

\subsection*{Acknowledgements}

The author is grateful to Professor Dr Peter Friz and Dr Yvain Bruned whose examination of the author's PhD thesis lead to several improvements in the mathematics on which this paper is based. 

The author is also very grateful to Dr Gon\c calo dos Reis who supervised the PhD and allowed the author to pursue the author's own projects. 

The author is grateful to the anonymous reviewer whose feedback lead to a number of improvements to the paper. 

The author acknowledges with thanks the funding received from the University of Edinburgh during the course of the author's PhD. 

\bibliographystyle{alpha}

\begin{bibdiv}
\begin{biblist}

\bib{aurzada2009small2}{article}{
      author={Aurzada, F.},
      author={Ibragimov, I.~A.},
      author={Lifshits, M.~A.},
      author={Van~Zanten, Kh.~Ya.},
       title={Small deviations of smooth stationary {G}aussian processes},
        date={2008},
        ISSN={0040-361X},
     journal={Teor. Veroyatn. Primen.},
      volume={53},
      number={4},
       pages={788\ndash 798},
         url={https://doi-org.ezproxy.is.ed.ac.uk/10.1137/S0040585X97983912},
      review={\MR{2766145}},
}

\bib{2019arXiv180205882.2B}{article}{
      author={Bailleul, Isma\"{e}l},
      author={Catellier, R\'{e}mi},
      author={Delarue, Fran\c{c}ois},
       title={Solving mean field rough differential equations},
        date={2020},
     journal={Electron. J. Probab.},
      volume={25},
       pages={Paper No. 21, 51},
         url={https://doi.org/10.1214/19-ejp409},
      review={\MR{4073682}},
}

\bib{2019arXiv190700578B}{article}{
      author={Bailleul, Isma\"{e}l},
      author={Catellier, R\'{e}mi},
      author={Delarue, Fran\c{c}ois},
       title={Propagation of chaos for mean field rough differential
  equations},
        date={2021},
        ISSN={0091-1798},
     journal={Ann. Probab.},
      volume={49},
      number={2},
       pages={944\ndash 996},
         url={https://doi.org/10.1214/20-aop1465},
      review={\MR{4255135}},
}

\bib{boissard2014mean}{inproceedings}{
      author={Boissard, Emmanuel},
      author={Le~Gouic, Thibaut},
       title={On the mean speed of convergence of empirical and occupation
  measures in {W}asserstein distance},
        date={2014},
      volume={50},
       pages={539\ndash 563},
         url={https://doi-org.ezproxy.is.ed.ac.uk/10.1214/12-AIHP517},
      review={\MR{3189084}},
}

\bib{bogachev1998gaussian}{book}{
      author={Bogachev, Vladimir~I.},
       title={Gaussian measures},
      series={Mathematical Surveys and Monographs},
   publisher={American Mathematical Society, Providence, RI},
        date={1998},
      volume={62},
        ISBN={0-8218-1054-5},
         url={https://doi.org/10.1090/surv/062},
      review={\MR{1642391}},
}

\bib{baldi1992some}{article}{
      author={Baldi, P.},
      author={Roynette, B.},
       title={Some exact equivalents for the {B}rownian motion in {H}\"{o}lder
  norm},
        date={1992},
        ISSN={0178-8051},
     journal={Probab. Theory Related Fields},
      volume={93},
      number={4},
       pages={457\ndash 484},
         url={https://doi.org/10.1007/BF01192717},
      review={\MR{1183887}},
}

\bib{coghi2018pathwise}{article}{
      author={Coghi, Michele},
      author={Deuschel, Jean-Dominique},
      author={Friz, Peter~K.},
      author={Maurelli, Mario},
       title={Pathwise {M}c{K}ean-{V}lasov theory with additive noise},
        date={2020},
        ISSN={1050-5164},
     journal={Ann. Appl. Probab.},
      volume={30},
      number={5},
       pages={2355\ndash 2392},
         url={https://doi.org/10.1214/20-AAP1560},
      review={\MR{4149531}},
}

\bib{cass2019support}{article}{
      author={Cass, Thomas},
      author={dos Reis, Gon{\c{c}}alo},
      author={Salkeld, William},
       title={The support of mckean vlasov equations driven by brownian
  motion},
        date={2019},
     journal={ArXiv e-prints},
      eprint={1911.01992},
}

\bib{CassLyonsEvolving}{article}{
      author={Cass, Thomas},
      author={Lyons, Terry},
       title={Evolving communities with individual preferences},
        date={2015},
        ISSN={0024-6115},
     journal={Proc. Lond. Math. Soc. (3)},
      volume={110},
      number={1},
       pages={83\ndash 107},
         url={https://doi.org/10.1112/plms/pdu040},
      review={\MR{3299600}},
}

\bib{cass2013integrability}{article}{
      author={Cass, Thomas},
      author={Litterer, Christian},
      author={Lyons, Terry},
       title={Integrability and tail estimates for {G}aussian rough
  differential equations},
        date={2013},
        ISSN={0091-1798},
     journal={Ann. Probab.},
      volume={41},
      number={4},
       pages={3026\ndash 3050},
         url={https://doi.org/10.1214/12-AOP821},
      review={\MR{3112937}},
}

\bib{carl1990entropy}{book}{
      author={Carl, Bernd},
      author={Stephani, Irmtraud},
       title={Entropy, compactness and the approximation of operators},
      series={Cambridge Tracts in Mathematics},
   publisher={Cambridge University Press, Cambridge},
        date={1990},
      volume={98},
        ISBN={0-521-33011-4},
         url={https://doi.org/10.1017/CBO9780511897467},
      review={\MR{1098497}},
}

\bib{dereich2003link}{article}{
      author={Dereich, S.},
      author={Fehringer, F.},
      author={Matoussi, A.},
      author={Scheutzow, M.},
       title={On the link between small ball probabilities and the quantization
  problem for {G}aussian measures on {B}anach spaces},
        date={2003},
        ISSN={0894-9840},
     journal={J. Theoret. Probab.},
      volume={16},
      number={1},
       pages={249\ndash 265},
         url={https://doi.org/10.1023/A:1022242924198},
      review={\MR{1956830}},
}

\bib{deuschel2017enhanced}{article}{
      author={Deuschel, Jean-Dominique},
      author={Friz, Peter~K.},
      author={Maurelli, Mario},
      author={Slowik, Martin},
       title={The enhanced {S}anov theorem and propagation of chaos},
        date={2018},
        ISSN={0304-4149},
     journal={Stochastic Process. Appl.},
      volume={128},
      number={7},
       pages={2228\ndash 2269},
         url={https://doi.org/10.1016/j.spa.2017.09.010},
      review={\MR{3804792}},
}

\bib{dobbs2014small}{article}{
      author={Dobbs, Daniel},
      author={Melcher, Tai},
       title={Small deviations for time-changed {B}rownian motions and
  applications to second-order chaos},
        date={2014},
     journal={Electron. J. Probab.},
      volume={19},
       pages={no. 85, 23},
         url={https://doi-org.ezproxy.is.ed.ac.uk/10.1214/EJP.v19-2993},
      review={\MR{3263642}},
}

\bib{dunnett1955approximations}{article}{
      author={Dunnett, C.~W.},
      author={Sobel, M.},
       title={Approximations to the probability integral and certain percentage
  points of a multivariate analogue of {S}tudent's {$t$}-distribution},
        date={1955},
        ISSN={0006-3444},
     journal={Biometrika},
      volume={42},
       pages={258\ndash 260},
         url={https://doi-org.ezproxy.is.ed.ac.uk/10.1093/biomet/42.1-2.258},
      review={\MR{68169}},
}

\bib{DemboZeitouni2010}{book}{
      author={Dembo, Amir},
      author={Zeitouni, Ofer},
       title={Large deviations techniques and applications},
      series={Stochastic Modelling and Applied Probability},
   publisher={Springer-Verlag, Berlin},
        date={2010},
      volume={38},
        ISBN={978-3-642-03310-0},
         url={https://doi.org/10.1007/978-3-642-03311-7},
        note={Corrected reprint of the second (1998) edition},
      review={\MR{2571413}},
}

\bib{edmunds2008function}{book}{
      author={Edmunds, D.~E.},
      author={Triebel, H.},
       title={Function spaces, entropy numbers, differential operators},
      series={Cambridge Tracts in Mathematics},
   publisher={Cambridge University Press, Cambridge},
        date={1996},
      volume={120},
        ISBN={0-521-56036-5},
         url={https://doi.org/10.1017/CBO9780511662201},
      review={\MR{1410258}},
}

\bib{frizhairer2014}{book}{
      author={Friz, Peter~K.},
      author={Hairer, Martin},
       title={A course on rough paths},
      series={Universitext},
   publisher={Springer, Cham},
        date={2014},
        ISBN={978-3-319-08331-5; 978-3-319-08332-2},
         url={https://doi.org/10.1007/978-3-319-08332-2},
        note={With an introduction to regularity structures},
      review={\MR{3289027}},
}

\bib{friz2010generalized}{article}{
      author={Friz, Peter},
      author={Oberhauser, Harald},
       title={A generalized {F}ernique theorem and applications},
        date={2010},
        ISSN={0002-9939},
     journal={Proc. Amer. Math. Soc.},
      volume={138},
      number={10},
       pages={3679\ndash 3688},
  url={https://doi-org.ezproxy.is.ed.ac.uk/10.1090/S0002-9939-2010-10528-2},
      review={\MR{2661566}},
}

\bib{friz2010differential}{inproceedings}{
      author={Friz, Peter},
      author={Victoir, Nicolas},
       title={Differential equations driven by {G}aussian signals},
        date={2010},
      volume={46},
       pages={369\ndash 413},
         url={https://doi.org/10.1214/09-AIHP202},
      review={\MR{2667703}},
}

\bib{friz2010multidimensional}{book}{
      author={Friz, Peter~K.},
      author={Victoir, Nicolas~B.},
       title={Multidimensional stochastic processes as rough paths},
      series={Cambridge Studies in Advanced Mathematics},
   publisher={Cambridge University Press, Cambridge},
        date={2010},
      volume={120},
        ISBN={978-0-521-87607-0},
         url={https://doi.org/10.1017/CBO9780511845079},
        note={Theory and applications},
      review={\MR{2604669}},
}

\bib{gao2004entropy}{article}{
      author={Gao, Fuchang},
       title={Entropy of absolute convex hulls in {H}ilbert spaces},
        date={2004},
        ISSN={0024-6093},
     journal={Bull. London Math. Soc.},
      volume={36},
      number={4},
       pages={460\ndash 468},
         url={https://doi-org.ezproxy.is.ed.ac.uk/10.1112/S0024609304003121},
      review={\MR{2069008}},
}

\bib{gao2008entropy}{article}{
      author={Gao, Fuchang},
       title={Entropy estimate for {$k$}-monotone functions via small ball
  probability of integrated {B}rownian motion},
        date={2008},
     journal={Electron. Commun. Probab.},
      volume={13},
       pages={121\ndash 130},
         url={https://doi.org/10.1214/ECP.v13-1357},
      review={\MR{2386068}},
}

\bib{gupta1972inequalities}{inproceedings}{
      author={Gupta, S~Das},
      author={Eaton, Morris~L},
      author={Olkin, Ingram},
      author={Perlman, M},
      author={Savage, Leonard~J},
      author={Sobel, Milton},
       title={Inequalities on the probability content of convex regions for
  elliptically contoured distributions},
        date={1972},
   booktitle={Proceedings of the sixth berkeley symposium on mathematical
  statistics and probability (univ. california, berkeley, calif., 1970/1971)},
      volume={2},
       pages={241\ndash 265},
}

\bib{gao2003integrated}{article}{
      author={Gao, F.},
      author={Hannig, J.},
      author={Torcaso, F.},
       title={Integrated {B}rownian motions and exact {$L_2$}-small balls},
        date={2003},
        ISSN={0091-1798},
     journal={Ann. Probab.},
      volume={31},
      number={3},
       pages={1320\ndash 1337},
         url={https://doi.org/10.1214/aop/1055425782},
      review={\MR{1989435}},
}

\bib{graf2007foundations}{book}{
      author={Graf, Siegfried},
      author={Luschgy, Harald},
       title={Foundations of quantization for probability distributions},
      series={Lecture Notes in Mathematics},
   publisher={Springer-Verlag, Berlin},
        date={2000},
      volume={1730},
        ISBN={3-540-67394-6},
         url={https://doi.org/10.1007/BFb0103945},
      review={\MR{1764176}},
}

\bib{graf2003functional}{article}{
      author={Graf, Siegfried},
      author={Luschgy, Harald},
      author={Pag\`es, Gilles},
       title={Functional quantization and small ball probabilities for
  {G}aussian processes},
        date={2003},
        ISSN={0894-9840},
     journal={J. Theoret. Probab.},
      volume={16},
      number={4},
       pages={1047\ndash 1062 (2004)},
         url={https://doi.org/10.1023/B:JOTP.0000012005.32667.9d},
      review={\MR{2033197}},
}

\bib{herrmann2013stochastic}{book}{
      author={Herrmann, Samuel},
      author={Imkeller, Peter},
      author={Pavlyukevich, Ilya},
      author={Peithmann, Dierk},
       title={Stochastic resonance},
      series={Mathematical Surveys and Monographs},
   publisher={American Mathematical Society, Providence, RI},
        date={2014},
      volume={194},
        ISBN={978-1-4704-1049-0},
        note={A mathematical approach in the small noise limit},
      review={\MR{3155413}},
}

\bib{khatri1967certain}{article}{
      author={Khatri, C.~G.},
       title={On certain inequalities for normal distributions and their
  applications to simultaneous confidence bounds},
        date={1967},
        ISSN={0003-4851},
     journal={Ann. Math. Statist.},
      volume={38},
       pages={1853\ndash 1867},
         url={https://doi-org.ezproxy.is.ed.ac.uk/10.1214/aoms/1177698618},
      review={\MR{220392}},
}

\bib{kuhn2019gaussian}{article}{
      author={K\"{u}hn, T.},
      author={Linde, W.},
       title={Gaussian approximation numbers and metric entropy},
        date={2017},
        ISSN={0373-2703},
     journal={Zap. Nauchn. Sem. S.-Peterburg. Otdel. Mat. Inst. Steklov.
  (POMI)},
      volume={457},
      number={Veroyatnost\cprime i Statistika. 25},
       pages={194\ndash 210},
      review={\MR{3723582}},
}

\bib{Kuelbs1993}{article}{
      author={Kuelbs, James},
      author={Li, Wenbo~V.},
       title={Metric entropy and the small ball problem for {G}aussian
  measures},
        date={1993},
        ISSN={0022-1236},
     journal={J. Funct. Anal.},
      volume={116},
      number={1},
       pages={133\ndash 157},
         url={https://doi-org.ezproxy.is.ed.ac.uk/10.1006/jfan.1993.1107},
      review={\MR{1237989}},
}

\bib{kuelbs1993small}{article}{
      author={Kuelbs, James},
      author={Li, Wenbo~V.},
       title={Small ball estimates for {B}rownian motion and the {B}rownian
  sheet},
        date={1993},
        ISSN={0894-9840},
     journal={J. Theoret. Probab.},
      volume={6},
      number={3},
       pages={547\ndash 577},
         url={https://doi-org.ezproxy.is.ed.ac.uk/10.1007/BF01066717},
      review={\MR{1230346}},
}

\bib{kley2013kuelbs}{article}{
      author={Kley, Oliver},
       title={Kuelbs-{L}i inequalities and metric entropy of convex hulls},
        date={2013},
        ISSN={0894-9840},
     journal={J. Theoret. Probab.},
      volume={26},
      number={3},
       pages={649\ndash 665},
         url={https://doi-org.ezproxy.is.ed.ac.uk/10.1007/s10959-012-0408-5},
      review={\MR{3090544}},
}

\bib{kuelbs1994gaussian}{article}{
      author={Kuelbs, James},
      author={Li, Wenbo~V.},
      author={Linde, Werner},
       title={The {G}aussian measure of shifted balls},
        date={1994},
        ISSN={0178-8051},
     journal={Probab. Theory Related Fields},
      volume={98},
      number={2},
       pages={143\ndash 162},
         url={https://doi-org.ezproxy.is.ed.ac.uk/10.1007/BF01192511},
      review={\MR{1258983}},
}

\bib{kuelbs1995small}{article}{
      author={Kuelbs, J.},
      author={Li, W.~V.},
      author={Shao, Qi~Man},
       title={Small ball probabilities for {G}aussian processes with stationary
  increments under {H}\"{o}lder norms},
        date={1995},
        ISSN={0894-9840},
     journal={J. Theoret. Probab.},
      volume={8},
      number={2},
       pages={361\ndash 386},
         url={https://doi-org.ezproxy.is.ed.ac.uk/10.1007/BF02212884},
      review={\MR{1325856}},
}

\bib{latala2017royen}{incollection}{
      author={Lata\l~a, Rafa\l},
      author={Matlak, Dariusz},
       title={Royen's proof of the {G}aussian correlation inequality},
        date={2017},
   booktitle={Geometric aspects of functional analysis},
      series={Lecture Notes in Math.},
      volume={2169},
   publisher={Springer, Cham},
       pages={265\ndash 275},
      review={\MR{3645127}},
}

\bib{ledoux1996isoperimetry}{incollection}{
      author={Ledoux, Michel},
       title={Isoperimetry and {G}aussian analysis},
        date={1996},
   booktitle={Lectures on probability theory and statistics ({S}aint-{F}lour,
  1994)},
      series={Lecture Notes in Math.},
      volume={1648},
   publisher={Springer, Berlin},
       pages={165\ndash 294},
         url={https://doi.org/10.1007/BFb0095676},
      review={\MR{1600888}},
}

\bib{li1999gaussian}{article}{
      author={Li, Wenbo~V.},
       title={A {G}aussian correlation inequality and its applications to small
  ball probabilities},
        date={1999},
        ISSN={1083-589X},
     journal={Electron. Comm. Probab.},
      volume={4},
       pages={111\ndash 118},
         url={https://doi.org/10.1214/ECP.v4-1012},
      review={\MR{1741737}},
}

\bib{lifshits2013gaussian}{book}{
      author={Lifshits, M.~A.},
       title={Gaussian random functions},
      series={Mathematics and its Applications},
   publisher={Kluwer Academic Publishers, Dordrecht},
        date={1995},
      volume={322},
        ISBN={0-7923-3385-3},
         url={https://doi.org/10.1007/978-94-015-8474-6},
      review={\MR{1472736}},
}

\bib{li1999approximation}{article}{
      author={Li, Wenbo~V.},
      author={Linde, Werner},
       title={Approximation, metric entropy and small ball estimates for
  {G}aussian measures},
        date={1999},
        ISSN={0091-1798},
     journal={Ann. Probab.},
      volume={27},
      number={3},
       pages={1556\ndash 1578},
         url={https://doi-org.ezproxy.is.ed.ac.uk/10.1214/aop/1022677459},
      review={\MR{1733160}},
}

\bib{luschgy2004sharp}{article}{
      author={Luschgy, Harald},
      author={Pag\`es, Gilles},
       title={Sharp asymptotics of the {K}olmogorov entropy for {G}aussian
  measures},
        date={2004},
        ISSN={0022-1236},
     journal={J. Funct. Anal.},
      volume={212},
      number={1},
       pages={89\ndash 120},
         url={https://doi-org.ezproxy.is.ed.ac.uk/10.1016/j.jfa.2003.09.004},
      review={\MR{2065239}},
}

\bib{lyons2002system}{book}{
      author={Lyons, Terry},
      author={Qian, Zhongmin},
       title={System control and rough paths},
      series={Oxford Mathematical Monographs},
   publisher={Oxford University Press, Oxford},
        date={2002},
        ISBN={0-19-850648-1},
         url={https://doi.org/10.1093/acprof:oso/9780198506485.001.0001},
        note={Oxford Science Publications},
      review={\MR{2036784}},
}

\bib{ledoux2002large}{article}{
      author={Ledoux, M.},
      author={Qian, Z.},
      author={Zhang, T.},
       title={Large deviations and support theorem for diffusion processes via
  rough paths},
        date={2002},
        ISSN={0304-4149},
     journal={Stochastic Process. Appl.},
      volume={102},
      number={2},
       pages={265\ndash 283},
  url={https://doi-org.ezproxy.is.ed.ac.uk/10.1016/S0304-4149(02)00176-X},
      review={\MR{1935127}},
}

\bib{li2001gaussian}{article}{
      author={Li, W.~V.},
      author={Shao, Q.-M.},
       title={Gaussian processes: inequalities, small ball probabilities and
  applications},
        date={2001},
      volume={19},
       pages={533\ndash 597},
         url={https://doi.org/10.1016/S0169-7161(01)19019-X},
      review={\MR{1861734}},
}

\bib{ledoux2013probability}{book}{
      author={Ledoux, Michel},
      author={Talagrand, Michel},
       title={Probability in {B}anach spaces},
      series={Classics in Mathematics},
   publisher={Springer-Verlag, Berlin},
        date={2011},
        ISBN={978-3-642-20211-7},
        note={Isoperimetry and processes, Reprint of the 1991 edition},
      review={\MR{2814399}},
}

\bib{lyons1998differential}{article}{
      author={Lyons, Terry~J.},
       title={Differential equations driven by rough signals},
        date={1998},
        ISSN={0213-2230},
     journal={Rev. Mat. Iberoamericana},
      volume={14},
      number={2},
       pages={215\ndash 310},
         url={https://doi.org/10.4171/RMI/240},
      review={\MR{1654527}},
}

\bib{pitt1977gaussian}{article}{
      author={Pitt, Loren~D.},
       title={A {G}aussian correlation inequality for symmetric convex sets},
        date={1977},
        ISSN={0091-1798},
     journal={Ann. Probability},
      volume={5},
      number={3},
       pages={470\ndash 474},
         url={https://doi-org.ezproxy.is.ed.ac.uk/10.1214/aop/1176995808},
      review={\MR{448705}},
}

\bib{riedel2017transportation}{article}{
      author={Riedel, Sebastian},
       title={Transportation-cost inequalities for diffusions driven by
  {G}aussian processes},
        date={2017},
     journal={Electron. J. Probab.},
      volume={22},
       pages={Paper No. 24, 26},
         url={https://doi-org.ezproxy.is.ed.ac.uk/10.1214/17-EJP40},
      review={\MR{3622894}},
}

\bib{royen2014simple}{article}{
      author={Royen, Thomas},
       title={A simple proof of the gaussian correlation conjecture extended to
  multivariate gamma distributions},
        date={2014},
     journal={ArXiv e-prints},
      eprint={1408.1028},
}

\bib{sidak1968multivariate}{article}{
      author={\v{S}id\'{a}k, Zbyn\v{e}k},
       title={On multivariate normal probabilities of rectangles: {T}heir
  dependence on correlations},
        date={1968},
        ISSN={0003-4851},
     journal={Ann. Math. Statist.},
      volume={39},
       pages={1425\ndash 1434},
         url={https://doi-org.ezproxy.is.ed.ac.uk/10.1214/aoms/1177698122},
      review={\MR{230403}},
}

\bib{stolz1996some}{article}{
      author={Stolz, Wolfgang},
       title={Some small ball probabilities for {G}aussian processes under
  nonuniform norms},
        date={1996},
        ISSN={0894-9840},
     journal={J. Theoret. Probab.},
      volume={9},
      number={3},
       pages={613\ndash 630},
         url={https://doi-org.ezproxy.is.ed.ac.uk/10.1007/BF02214078},
      review={\MR{1400590}},
}

\bib{varadhan1984large}{book}{
      author={Varadhan, S. R.~S.},
       title={Large deviations and applications},
      series={CBMS-NSF Regional Conference Series in Applied Mathematics},
   publisher={Society for Industrial and Applied Mathematics (SIAM),
  Philadelphia, PA},
        date={1984},
      volume={46},
        ISBN={0-89871-189-4},
         url={https://doi.org/10.1137/1.9781611970241.bm},
      review={\MR{758258}},
}

\bib{van2007bayesian}{article}{
      author={van~der Vaart, Aad},
      author={van Zanten, Harry},
       title={Bayesian inference with rescaled {G}aussian process priors},
        date={2007},
     journal={Electron. J. Stat.},
      volume={1},
       pages={433\ndash 448},
         url={https://doi.org/10.1214/07-EJS098},
      review={\MR{2357712}},
}

\bib{vaart2011information}{article}{
      author={van~der Vaart, Aad},
      author={van Zanten, Harry},
       title={Information rates of nonparametric {G}aussian process methods},
        date={2011},
        ISSN={1532-4435},
     journal={J. Mach. Learn. Res.},
      volume={12},
       pages={2095\ndash 2119},
      review={\MR{2819028}},
}

\end{biblist}
\end{bibdiv}

\end{document}